\newtheorem{tw}{Theorem}[section]
\newtheorem{lm}[tw]{Lemma}
\newtheorem{wn}[tw]{Corollary}
\newtheorem{pr}[tw]{Proposition}
\theoremstyle{definition}
\newtheorem{df}{Definition}[section]
\newtheorem{uw}[df]{Remark}
\newtheorem{ex}[df]{Example}
  \newcommand{\pd}[2]{\frac{\partial #1}{\partial #2}}
\numberwithin{equation}{section}
\numberwithin{figure}{section}
\date{}
\title{On the self-similarity problem for smooth flows on orientable surfaces}
\author{Joanna Ku\l{}aga\footnote{Research partially supported by MNiSzW grant N N201 384834 and Marie Curie "Transfer of Knowledge" program, project MTKD-CT-2005-030042 (TODEQ).}\\{\small Faculty of Mathematics and Computer Science, Nicolaus Copernicus University},\\{\small ul. Chopina 12/18, 87-100 Toru\'n, Poland}\\ {\small e-mail: joanna.kulaga@gmail.com} }
\begin{document}
\bibliographystyle{plain}
\maketitle

\begin{abstract}
On each compact connected orientable surface of genus greater than one we construct a class of flows without self-similarities.
\end{abstract}


\tableofcontents

\section{Introduction}\label{intro}

\subsection{Main result}
In this paper we deal with some ergodic properties of flows on surfaces. More precisely, we consider smooth measure-preserving flows on compact connected orientable surfaces of genus $\mathbf{g}\geq 2$ with a finite number of non-degenerate singular points and no saddle connections. Among them we find a class of flows with no self-similarities, i.e. flows $\mathcal{T}=\{T_t\}_{t\in\mathbb{R}}$ for which there is no $s\in\mathbb{R}\setminus\{-1,1\}$ such that the flows $\mathcal{T}$ and $\mathcal{T}\circ s:=\{T_{st}\}_{t\in\mathbb{R}}$ are measure-theoretically isomorphic. Thus we settle an open question raised in~\cite{FL08}. More precisely, we show that the following holds.
\begin{tw}\label{tw:g}
On any closed compact orientable surface of genus greater or equal two there exists a smooth flow which is not self-similar.
\end{tw}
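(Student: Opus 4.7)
The strategy is a reduction to special flows over interval exchange transformations (IETs), together with a joining-rigidity analysis that turns the non-self-similarity condition into a statement about the flow's self-joinings.

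First, I would invoke the classical Arnold representation: a smooth measure-preserving flow on a closed orientable surface of genus $\mathbf{g}\ge 2$ with non-degenerate saddles and no saddle connection is, restricted to each minimal component, measure-theoretically isomorphic to a special flow $T^f$ built over an IET $T$ on $d\ge 2\mathbf{g}$ intervals, with roof $f$ having logarithmic singularities over the discontinuities of $T$. The combinatorial datum of $T$ and the strengths of the singularities can be prescribed to realize any genus $\mathbf{g}\ge 2$, so it suffices to construct a non-self-similar example inside this class.

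Second, I would select $T$ and $f$ so that two properties hold simultaneously. On the one hand, $T$ should be chosen from a set of IETs of full Masur-Veech measure, so that Rauzy-Veech renormalization converges at a controlled rate and one has good quantitative Denjoy-Koksma estimates for $f$ along the induced tower partitions. On the other hand, the singularities of $f$ should be \emph{asymmetric} in the sense of Fraczek-Lemanczyk, guaranteeing weak mixing of $T^f$ and, more importantly, furnishing a finite-rank or weak Ratner-type property from which one can extract a rigid structure on the ergodic self-joinings of $\mathcal{T}=\{T_t^f\}$.

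Third, I would translate the self-similarity question into a joining question. An isomorphism $S$ between $\mathcal{T}$ and $\mathcal{T}\circ s$ yields an ergodic self-joining $\nu_S$ of $\mathcal{T}$ with marginals invariant under $T_t\times T_{st}$. Using the weak Ratner/finite-valued joining property from the previous step, $\nu_S$ must be of a very restricted form, essentially the graph of some flow map $T_u$ of $\mathcal{T}$ itself. Pushing this description back gives an identity of the form $S\circ T_t=T_{st}\circ S$ compatible with $\mathcal{T}$-equivariance only along one time scale; comparing the two scales along a rigidity (or partial rigidity) sequence of $\mathcal{T}$ forces $s\in\{-1,1\}$.

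The main obstacle is the second step: establishing a sharp enough joining-rigidity/Ratner-type property for smooth flows with logarithmic singularities on \emph{every} genus $\mathbf{g}\ge 2$, uniformly in the combinatorics of the IET. This requires careful control of the Birkhoff sums of $f$ and of the shearing produced by asymmetric log singularities along Rauzy-Veech towers, which is the technical heart of the argument; the reduction to special flows and the passage from joining rigidity to non-self-similarity are comparatively formal.
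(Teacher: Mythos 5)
Your first step (the special-flow representation over an IET with logarithmic singularities) matches the paper, but after that the proposal diverges in a way that opens two genuine gaps. First, you propose to take the singularities \emph{asymmetric} so as to get mixing and a Ratner-type property. For the flows actually used to prove the theorem --- smooth area-preserving flows with non-degenerate saddles and \emph{no saddle connections} --- the logarithmic singularities are forced to be of \emph{symmetric} type (each saddle contributes equal constants $c_i^+=c_i^-$; asymmetry only arises if one admits loop saddle connections). Symmetry is not a dispensable convenience here: it is exactly what makes Ulcigrai's non-stretching estimate $|f^{(q_n)}(x)-f^{(q_n)}(y)|<M$ on rigidity sets available, which in turn is what puts an operator of the form $a\int_{\mathbb{R}}T_t\,dP(t)+(1-a)J$ with $a>0$ into the weak closure of $\{T_t\}$. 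With asymmetric singularities the Birkhoff sums stretch, the flow is (typically) mixing, and this limit operator degenerates, so the mechanism you would need later is destroyed rather than enabled.

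Second, the step you yourself identify as the ``technical heart'' --- a joining-rigidity/Ratner-type classification of the ergodic joinings between $\mathcal{T}$ and $\mathcal{T}\circ s$ for special flows with logarithmic singularities over general IETs --- is precisely the part that is not established, and the paper never proves or uses anything of the sort. Instead, the paper's route is: (i) choose the base IET of periodic type, show it has balanced partition lengths (Lemma \ref{kazdypjestb}), and use this to prove by direct estimates on $|f'^{(j)}|$ that the special flow is \emph{not partially rigid} (Theorem \ref{tw12}); (ii) use the symmetric-type non-stretching to get $a\int_{\mathbb{R}}T_t\,dP(t)+(1-a)J$ in the weak closure via Theorem \ref{tw15}; and (iii) invoke the Fr\k{a}czek--Lema\'nczyk criterion (Lemma \ref{lm:fl08}), which says that these two properties together already exclude self-similarity, with no classification of joinings required. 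As written, your proposal defers the entire difficulty to an unproven Ratner-type statement in a setting (asymmetric singularities, general IET base) that does not even arise for the saddle-connection-free flows the theorem is about.
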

The problems connected with the notion of self-similarity were studied in the past by numerous authors (e.g. in~\cite{Ka-Th},~\cite{Ma} and~\cite{Ry97}). Let us list here some of the results related to self-similarity which by no means constitute a complete survey. Let us mention first a result of an opposite nature to what will be of our interest in this paper. In~\cite{Ma} B.~Marcus showed that every positive number $s$ is a scale of self-similarity of the horocycle flow on a connected orientable surface of constant negative curvature and finite area. The further studies include investigations of the size of the set $I(\mathcal{T})=\{s\in\mathbb{R}\colon \mathcal{T}\simeq \mathcal{T}\circ s \}$ and some disjointness results (see e.g.~\cite{FL08},~\cite{Ry91} and~\cite{Ry96} or also more recent~\cite{da-ry}). There are also quite a few different examples of flows with no self-similarities, which include mixing rank one flows~\cite{Ry97}, special flows over an ergodic interval exchange transformation under some piecewise absolutely continuous roof functions and special flows over irrational rotations satisfying a certain Diophantine condition under some piecewise constant roof functions~\cite{FL08}.

\subsection{Outline of the proof}\label{se:out}

The main idea of the proof of Theorem~\ref{tw:g} is to use a special representation $T^h=\{T_t\}_{t\in\mathbb{R}}$ of considered systems~\cite{Arnold91},~\cite{Kochergin76},~\cite{Zorich94u}. In this representation the base automorphism $T$ is an interval exchange transformation and the roof function $h$ is smooth, except for a finite collection of points where it has logarithmic singularities, i.e. it is of the form $f+g$, where
\begin{equation*}
f(x)=\sum_{0\leq i\leq r-1}\left(-c^{+}_{i}\log\{x-{\beta}_{i}\}\right)+\sum_{0\leq i\leq r-1} \left(-c^{-}_{i+1}\log\{{\beta}_{i+1}-x\}\right),
\end{equation*}
where ${\beta}_i$ for $0\leq i\leq r-1$ are the discontinuity points of the interval exchange transformation $T$ and $\{\cdot\}$ stands for the fractional part. The constants $c_{i+1}^-$, $c_i^+$ for $0\leq i\leq r-1$ are positive (except for two of them which are equal to zero, for more details see Section~\ref{reduction}), while the function $g$ is piecewise absolutely continuous.

The methods of showing that a flow is not self-similar developed in~\cite{FL08} rely on two properties: the absence of partial rigidity and a condition which is strictly related to the absence of mixing. We will use the following result ($J(\mathcal{T})$ stands for the set of self-joinings of $\mathcal{T}$, $\mathcal{P}(\mathbb{R})$ is the set of all probability Borel measures on $\mathbb{R}$ and $\{\cdot \}^d$ denotes the closure in the weak operator topology).
\begin{lm}[\cite{FL08}]\label{lm:fl08}
Let $\mathcal{T}=\{T_t\}_{t\in\mathbb{R}}$ be a measure-preserving flow on $(X,\mu)$. 
If $\mathcal{T}$ is not partially rigid and $a\int_{\mathbb{R}}T_t\ dP(t)+(1-a)J$ belongs to $\{T_t\colon t\in \mathbb{R}\}^d$ for some $P\in\mathcal{P}(\mathbb{R})$, $0<a\leq 1$ and $J\in J(\mathcal{T})$ then $\mathcal{T}$ is not self-similar.
\end{lm}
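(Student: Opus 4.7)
The plan is to argue by contradiction: suppose $\mathcal{T}$ is self-similar, so there exist $s\in\mathbb{R}\setminus\{-1,1\}$ and a measure-preserving isomorphism $S$ of $(X,\mu)$ with $ST_tS^{-1}=T_{st}$, hence $S^nT_tS^{-n}=T_{s^nt}$ for every $n\in\mathbb{Z}$. Replacing $s$ by $s^{-1}$ if necessary, assume $|s|>1$. The conjugation $R\mapsto SRS^{-1}$ is continuous on the space of Markov operators with the weak operator topology and sends $T_t$ to $T_{st}$, so it preserves the weakly closed set $\{T_t:t\in\mathbb{R}\}^d$. Writing $Q_0:=a\int T_t\,dP(t)+(1-a)J$ for the operator in the hypothesis, it follows that
\[
S^nQ_0S^{-n}\;=\;a\!\int T_{s^nt}\,dP(t)\;+\;(1-a)\,S^nJS^{-n}\;\in\;\{T_t:t\in\mathbb{R}\}^d\qquad\text{for every }n\in\mathbb{Z}.
\]

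Now I let $n=-N\to-\infty$, so that $s^{-N}\to 0$. Strong continuity of the flow implies $T_{s^{-N}t}f\to f$ in $L^2(\mu)$ for each $t$ and $f$, and dominated convergence gives $\int T_{s^{-N}t}f\,dP(t)\to f$; hence $\int T_{s^{-N}t}\,dP(t)\to I$ in the strong (hence weak) operator topology. Markov operators form a weak-operator compact set, so along some subsequence $N_k\to\infty$ we have $S^{-N_k}JS^{N_k}\to J_\infty$ weakly, whence
\[
S^{-N_k}Q_0S^{N_k}\longrightarrow aI+(1-a)J_\infty\qquad\text{weakly.}
\]
By closedness of $\{T_t:t\in\mathbb{R}\}^d$, the limit $aI+(1-a)J_\infty$ lies in $\{T_t:t\in\mathbb{R}\}^d$.

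It remains to upgrade this to a partial-rigidity statement. Fix a sequence $T_{u_m}\to Q_0$ witnessing $Q_0\in\{T_t\}^d$. Since $Q_0$ is a genuine convex combination --- a single $T_{u^*}$ would, by extremality of unitaries among convex combinations of Markov contractions, force $\int T_t\,dP(t)=T_{u^*}$ and $J=T_{u^*}$ and (via spectral analysis) $P=\delta_{u^*}$, reducing to a degenerate case --- the times $u_m$ may be chosen with $|u_m|\to\infty$. For each $k$, the identity $S^{-N_k}Q_0S^{N_k}=\lim_m T_{s^{-N_k}u_m}$ together with metrizability of the weak operator topology on bounded sets allows a diagonal selection of $m(k)$ with $|s^{-N_k}u_{m(k)}|>k$ and $T_{s^{-N_k}u_{m(k)}}$ within $1/k$ of $S^{-N_k}Q_0S^{N_k}$. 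Setting $v_k:=s^{-N_k}u_{m(k)}$, one obtains $|v_k|\to\infty$ and $T_{v_k}\to aI+(1-a)J_\infty$ weakly, i.e.\ $\mathcal{T}$ is partially rigid with constant $a>0$, contradicting the hypothesis.

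The main technical obstacle is the last step: ensuring that the diagonal times $v_k$ diverge rather than collapse to a bounded set (which would give only $aI+(1-a)J_\infty=T_{t^*}$ and no contradiction). The key point is that $s^n\neq 0$ for each $n$, so the map $t\mapsto s^nt$ preserves unboundedness; combined with the non-triviality of the convex combination defining $Q_0$, this makes the diagonal extraction possible. The remaining ingredients --- weak-operator compactness of the Markov operators, closedness of $\{T_t\}^d$, and $L^2$-continuity of the flow --- are standard.
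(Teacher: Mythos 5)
Your argument is essentially the one the paper relies on: the lemma itself is only cited from~\cite{FL08}, but the paper reproduces exactly this scheme when proving its spectral analogue (Lemma~\ref{zmodyfikowane} together with Theorem~\ref{tw:spco}): conjugate by the isomorphism $S$ to get $a\int T_{s^nt}\,dP(t)+(1-a)S^nJS^{-n}$ inside $\{T_t\}^d$, send the scale $s^n\to 0$ so that the integral term collapses to $I$ (the paper's Remark~\ref{rm:FL08}), extract a weak limit $J_\infty$ of the conjugated joinings by compactness, and read off partial rigidity from $aI+(1-a)J_\infty\in\{T_t\}^d$. Your observation that in the metric setting $J_\infty$ remains a Markov operator commuting with the flow, hence a self-joining, is exactly why one gets partial rigidity with constant $a$ for all $0<a\le 1$ here (via Remark~\ref{czesc}), whereas the spectral version of the paper must assume $a>1/2$ because there $K$ is only a contraction. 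So the route and the conclusion match.

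The one soft spot is your treatment of the ``degenerate case.'' You need the witnessing times $u_m$ to satisfy $|u_m|\to\infty$, and you derive this from $Q_0$ not being equal to any single $T_{u^*}$; but you then dismiss the case $Q_0=T_{u^*}$ as ``degenerate'' without resolving it. It cannot be resolved under the literal reading of the statement: taking $a=1$, $P=\delta_0$ gives $Q_0=I=T_0\in\{T_t\}^d$ trivially for \emph{every} flow, and the horocycle flow is mixing (hence not partially rigid) yet self-similar, so the lemma as literally written would be false. The intended (and, in~\cite{FL08}, actual) hypothesis is that the operator arises as a weak limit of $T_{t_n}$ with $|t_n|\to\infty$; the paper uses it only in that form (in Theorem~\ref{tw15} the times $a_n$ are Birkhoff sums tending to infinity, and the proof of Lemma~\ref{zmodyfikowane} begins by \emph{assuming} a sequence with $|t_n|\to+\infty$). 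With that reading your diagonal extraction of $v_k=s^{-N_k}u_{m(k)}$ with $|v_k|\to\infty$ is correct and the proof is complete; without it, no proof can exist, so you should state this convention rather than argue around the degenerate case.
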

As we can see, there are two main ingredients needed to show the absence of self-similarities. One of them is that in the weak closure of time automorphisms we can find an operator of the form $a\int_{\mathbb{R}}T_t\ dP(t)+(1-a)J$. Due to a result from~\cite{FL05} (see Theorem~\ref{tw15}) this condition can be replaced in our situation by the boundedness of the sequence $\{ \int_{D_n} |f^{(q_n)}(x)-a_n|^2\ d \mu (x)\}$, where $D_n$ are appropriately chosen rigidity subsets of the interval $[0,1)$ in the base of the special flow. Important in the process of obtaining an operator of the form $a\int_{\mathbb{R}}T_t\ dP(t)+(1-a)J$ in the weak closure of time automorphisms is the sequence of measures $\left\{\frac{1}{\mu(D_n)}\left(\left(f^{(q_n)}(x)-a_n\right)|_{D_n}\right)_{\ast}\left(\mu|_{D_n}\right)\right\}$, which turns out to be uniformly tight whenever the sequence $\left\{ \int_{D_n} |f^{(q_n)}(x)-a_n|^2\ d \mu (x)\right\}$ is bounded. Recall that $\left(\left(f^{(q_n)}(x)-a_n\right)|_{D_n}\right)_{\ast}\left(\mu|_{D_n}\right)$ stands for the image of measure $\mu|_{D_n}$ via $\left(f^{(q_n)}(x)-a_n\right)|_{D_n}$. A theorem recently proved by C. Ulcigrai in~\cite{0901.4764} ensures that the sequence $\left\{ \int_{D_n} |f^{(q_n)}(x)-a_n|^2\ d \mu (x)\right\}$ is in fact bounded. This condition is strictly connected with the absence of mixing.

The other component needed to prove the absence of self-similarities is the absence of partial rigidity. This will be our main technical concern, i.e. we have to show that there is no $0<u<1$ and no sequence $\{t_n\}_{n\in\mathbb{N}}$ ($t_n \to \infty$) satisfying $\liminf_{n\to\infty}\mu (A\cap T_{-t_n}A)\geq u\mu(A)$ for every measurable set $A$, where $\mu$ is the measure preserved by the flow. As the base automorphism we exploit interval exchange transformations with balanced partition lengths (see the definition in Section~\ref{defin}). When $T$ is an irrational rotation by $\alpha$, the property of balanced partition lengths means that $\alpha$ has bounded partial quotients in its continued fraction expansion. To give also examples of flows without self-similarities over interval exchange transformations of more than two intervals, we show that all interval exchange transformations for which the renormalized Rauzy induction \cite{Rauzy79},~\cite{Veech82} is periodic also have balanced partition lengths.

\subsection{Organization of the remaining part of the paper}
In Section~\ref{defin} we first recall the definitions of \emph{self-similarities} (Section~\ref{se:2.1}) and \emph{partial rigidity} (Section~\ref{se:2.2}). Then we give the necessary information from the \emph{theory of joinings} (Section~\ref{se:2.3}). In Section~\ref{przek} we introduce notation and recall the definition of an \emph{interval exchange transformation}. We explain how to obtain an interval exchange transformation from an interval exchange transformation \emph{on the circle}. We also recall some basic facts connected with the \emph{continued fraction expansion} of irrational numbers. In Section~\ref{se:2.5} we concentrate on the \emph{Rauzy induction}: we recall its definition and also the definition of the \emph{Rauzy cocycle}. The further information is related to the \emph{towers} for interval exchange transformation and the \emph{Rauzy heights cocycle}. Section~\ref{se:2.6} is devoted to interval exchange transformations of \emph{periodic type}. We first recall the definition and in Section~\ref{se:2.6.1} we introduce the notion of \emph{balanced partition lengths}. In Section~\ref{se:2.7} we recall basic information about the \emph{special flows}.

In Section~\ref{reduction} we describe how to obtain a special flow representation of flows on closed compact orientable surfaces, which are given by closed 1-forms, with a finite number of non-degenerate critical points and no saddle connections.

In Section~\ref{rigidity} we show that the flows in some class of special flows over interval exchange transformation under the roof function with symmetric logarithmic singularities are not partially rigid (Theorem~\ref{tw12}). Namely, the claim of Theorem~\ref{tw12} holds whenever the interval exchange transformation in the base has balanced partition lengths.

The main concern in Section~\ref{balparle} is with the interval exchange transformations with balanced partition lengths. We show that this class of IETs includes all IETs of periodic type (see Lemma~\ref{kazdypjestb}).

In Section~\ref{selfsim} we prove the absence of self-similarities (Theorem~\ref{tw:g}) for the considered class of special flows using the results proved in Section~\ref{rigidity}. In Section~\ref{se:6.3} we deal with the problem of absence of spectral self-similarities. We formulate spectral counterparts of the results from~\cite{FL05} which are needed to prove the absence of metric self-similarities (see Theorem~\ref{tw:spco}) which allows us to prove the absence of spectral self-similarities. We give examples of special flows with no spectral self-similarities which can be obtained as a representation of smooth flows (with saddle connections) on surfaces of genus $\mathbf{g}\geq 2$ (see Example~\ref{przykladzik}).

\section{Definitions}\label{defin}

\subsection{Self-similarities}\label{se:2.1}
Let $\mathcal{T}=\{T_t\}_{t\in\mathbb{R}}$ be an ergodic measurable flow on a standard probability Borel space $(X,\mathcal{B},\mu)$. For $s\in\mathbb{R}\setminus\{0\}$ by $\mathcal{T}\circ s$ we denote the flow $\{T_{st}\}_{t\in\mathbb{R}}$.
\begin{df}
If $I(\mathcal{T})=\{s\in\mathbb{R}\colon \mathcal{T}\text{ and }\mathcal{T}\circ s\text{ are isomorphic}\} \subset\{-1,1\}$,
we say that the flow $\mathcal{T}$ has no \emph{self-similarities}. If there exists $s\in I(\mathcal{T})\setminus \{-1,1\}$ we say that $\mathcal{T}$ is \emph{self-similar} with the scale of self-similarity $s$.
\end{df}

\subsection{Joinings}\label{se:2.3}
Let $\mathcal{T}=\{T_t\}_{t\in\mathbb{R}}$ and $\mathcal{S}=\{S_t\}_{t\in\mathbb{R}}$ be measurable flows on $(X,\mathcal{B},\mu)$ and $(Y,\mathcal{C},\nu)$ respectively (by measurability of the flow $\{T_t\}_{t\in\mathbb{R}}$ we mean that the map $t \mapsto \left\langle f\circ S_t,g \right\rangle$ is continuous for all $f,g \in L^2 (X,\mathcal{B},\mu)$). By $\mathcal{J}(\mathcal{T},\mathcal{S})$ we denote the set of all joinings between $\mathcal{T}$ and $\mathcal{S}$, i.e. the set of all $\{T_t\times S_t\}_{t\in\mathbb{R}}$-invariant probability measures on $(X\times Y,\mathcal{B}\otimes \mathcal{C})$, whose projections on $X$ and $Y$ are equal to $\mu$ and $\nu$ respectively. For $\mathcal{J}(\mathcal{T},\mathcal{T})$ we write $\mathcal{J}(\mathcal{T})$. Joinings are in one-to-one correspondence with Markov operators $\Phi\colon L^2(X,\mathcal{B},\mu)\to L^2(Y,\mathcal{C},\nu)$ satisfying $\Phi\circ T_t=S_t \circ \Phi$ for all $t\in\mathbb{R}$. We denote the set of such Markov operators by $J(\mathcal{T},\mathcal{S})$ (as in case of measures we write $J(\mathcal{T})$ for $J(\mathcal{T},\mathcal{T})$). This identification allows us to view $\mathcal{J}(\mathcal{T})$ as a metrisable compact semitopological semigroup endowed with the weak operator topology. We say that $\mathcal{T}$ and $\mathcal{S}$ are disjoint if $\mathcal{J}(\mathcal{T},\mathcal{S})=\{\mu\otimes\nu\}$ (the notion of disjoitness was introduced by H.~Furstenberg in~\cite{Furstenberg67}). Given a flow $\mathcal{T}=\{T_t\}_{t\in\mathbb{R}}$ and a Borel probability measure $P$ on $\mathbb{R}$, we define the operator $\int_{\mathbb{R}}T_t\ dP(t)$ acting on $L^2(X,\mathcal{B},\mu)$ by $\left\langle(\int_{\mathbb{R}}T_t\ dP(t))f,g\right\rangle=\int_{\mathbb{R}}\left\langle T_t f,g \right\rangle dP(t)$ for all $f,g\in L^2 (X,\mathcal{B},\mu)$.

\subsection{Partial rigidity}\label{se:2.2}
\begin{df}
Let $\mathcal{T}=\{T_t\}_{t\in\mathbb{R}}$ be a measurable flow on a standard probability space $(X, \mathcal{B},\mu)$. The flow $\mathcal{T}$ is said to be \emph{partially rigid} along $\{t_n\}_{n\in\mathbb{N}}$ if there exists $0<u\leq 1$ such that
\begin{equation*}
\liminf_{n\to\infty}\mu(A\cap T_{-t_n}A)\geq u\mu(A)\text{ for every }A\in\mathcal{B}.
\end{equation*}
\end{df}

\begin{uw}\label{czesc}
Let $\mathcal{T}=\{T_t\}_{t\in\mathbb{R}}$ be an ergodic flow on a standard probability space $(X,\mathcal{B},\mu)$ which is partially rigid along time sequence $t_n\to \infty$ with rigidity constant $u\in (0,1]$. Then for any subsequence $\left( t_{n_k}\right)_{k\in\mathbb{N}}\subset \left(t_n \right)_{n\in\mathbb{N}}$ such that $T_{t_{n_k}}$ is convergent in weak operator topology there exists $K\in\mathcal{J}(T)$ such that
\begin{equation}\label{eq:jak}
\lim_{k\to\infty}T_{t_{n_k}} = u\cdot \text{Id} + (1-u)\cdot K.
\end{equation}
Indeed, let $n_k\to\infty$ be such a sequence that $T_{t_{n_k}}$ converges. Let $\Phi = \lim_{k\to\infty}{T_{t_{n_k}}}$. For any sets $A,B\in\mathcal{B}$ we have
\begin{equation*}
\lim_{k\to\infty}\mu\left(T_{t_{n_k}}A \cap B\right) \geq \lim_{k\to\infty} \mu \left( T_{t_{n_k}}(A \cap B) \cap (A\cap B) \right) \geq u \cdot \mu(A\cap B).
\end{equation*}
In other words, the following inequality holds for any $A,B\in\mathcal{B}$:
\begin{equation*}
\int_B  (\Phi-u\cdot \text{Id})(\mathbbm{1}_A)\ d\mu \geq 0
\end{equation*}
Therefore, letting $K:=\frac{\Phi-u\cdot\text{Id}}{1-u}$, we obtain
\begin{equation*}
Kf\geq 0 \text{ for any nonnegative function }f\in L^2(X,\mathcal{B},\mu).
\end{equation*}
Moreover,
\begin{itemize}
\item
$K \mathbbm{1} = K^{\ast}\mathbbm{1}=\mathbbm{1}$,
\item
$U_T \circ K = K \circ U_T$,
\end{itemize}
whence $K\in\mathcal{J}(T)$. This means that~\eqref{eq:jak} indeed holds.

On the other hand, whenever
\begin{equation*}
\lim_{k\to\infty}T_{n_{k}} = u\cdot \text{Id} + (1-u)\cdot K
\end{equation*}
for some $K\in\mathcal{J}(T)$, flow $\mathcal{T}$ is partially rigid along $\{t_n\}_{n\in\mathbb{N}}$.
\end{uw}

\subsection{Interval exchange transformations of $r\geq 2$ intervals}\label{przek}
\subsubsection{General definition}
An \emph{interval exchange transformation} (IET) is a piecewise order-preserving isometry of a finite interval. To describe an IET of $r\geq 1$ intervals on $[0,\lambda)$ we need the following data\footnote{We use the notation introduced by Marmi, Moussa and Yoccoz in~\cite{MMY05}.}: a pair of permutations of $r$ symbols $(\pi_0,\pi_1)$ and a vector $\underline{\lambda}=(\lambda_1,\lambda_2,\dots,\lambda_r)$ of lengths ($\lambda_i>0$ for $1\leq i\leq r$, $\sum_{i=1}^{r}\lambda_i=\lambda>0$). For $j=1,\dots,r$ the map $T$ is described by the formula
\begin{equation*}
Tx=x-\sum_{\pi_0(i)<j}\lambda_i + \sum_{\pi_1(i)<\pi_1(\pi_0^{-1}(j))}\lambda_i,\ x\in\left[\sum_{\pi_0(i)<j}\lambda_i,\sum_{\pi_0(i)\leq j}\lambda_i \right).
\end{equation*}
The pair $(\pi_0,\pi_1)$ determines the ordering of the subintervals before and after the map is iterated and $\underline{\lambda}$ is the vector of the lengths of the exchanged intervals. In what follows, we will always consider only \emph{irreducible pairs} $(\pi_0,\pi_1)$, i.e. such that for $1\leq k< r$ 
\begin{equation*}
\pi_0^{-1}(\{1,\dots,k\})\neq\pi_1^{-1}(\{1,\dots,k\})
\end{equation*}
(otherwise we could decompose the IET into two disjoint invariant subintervals and analyse two simpler dynamical systems). We endow the space $[0,\lambda)$ with Lebesgue measure denoted by $m$.

Let $T$ be an IET defined by the combinatorial data $(\pi_0,\pi_1)$ and by the length data $\underline{\lambda}$. Put
\begin{equation*}
\beta_j=\sum_{\pi_0 (i)\leq j}\lambda_i
\end{equation*}
for $0\leq j\leq r$. These are the discontinuities of $T$.\footnote{All of the points $\beta_j$ are called discontinuities, even though $T$ is continuous at $\beta_0$, it is not defined at $\beta_r$ and it may happen that $T$ is continuous at $\beta_j$ for some $0<j<r$.}
\begin{df}
We say that $T$ satisfies the \emph{infinite distinct orbit condition} (IDOC) if the orbits
\begin{equation*}
\mathcal{O}(\beta_j)=\left\{T^n\beta_j,\ n\in \mathbb{N}\right\} \textrm{ for } 1\leq j\leq r-1
\end{equation*}
are infinite and disjoint.
\end{df}
This definition provides a generalization of the irrational rotation on the circle. As it was proved by M.~Keane~\cite{Keane75}, if $T$ fulfills the IDOC, then all its orbits are dense. Moreover, if $\underline{\lambda}$ is rationally independent and the pair $(\pi_0,\pi_1)$ is irreducible, then $T$ satisfies the IDOC. 

\subsubsection{IETCs}
The definition of IETs can be easily transferred to the case of \emph{interval exchange transformations on the circle} (IETCs).
\begin{df}
By an \emph{interval exchange transformation on the circle} (IETC) we understand a map $T\colon \mathbb{T} \to \mathbb{T}$ which is a piecewise orientation-preserving isometry ($\mathbb{T}$ is identified with $\mathbb{S}^1=\{z\in\mathbb{C} \colon |z|=1\}$).
\end{df}
\begin{uw}\label{uw:ietandIETC}
Every IET yields an IETC by the identification of the ends of the interval. The number of the exchanged intervals (arcs in the case of IETCs) remains the same. 

On the other hand, every IETC yields an IET. Indeed, consider an IETC $T$ of $r-1$ arcs. Let us denote by $0$ one of the discontinuity points of $T$ and treat the circle as the interval $[0,1)$. Typically we obtain an IET of $r$ intervals. A point which is mapped by the IETC to $0$ (in the example in Figure~\ref{ietandIETC} denoted by $\beta_3$) becomes an additional discontinuity for the resulting IET.
\end{uw}


\begin{figure}[ht]
\centering
\includegraphics[height=280pt]{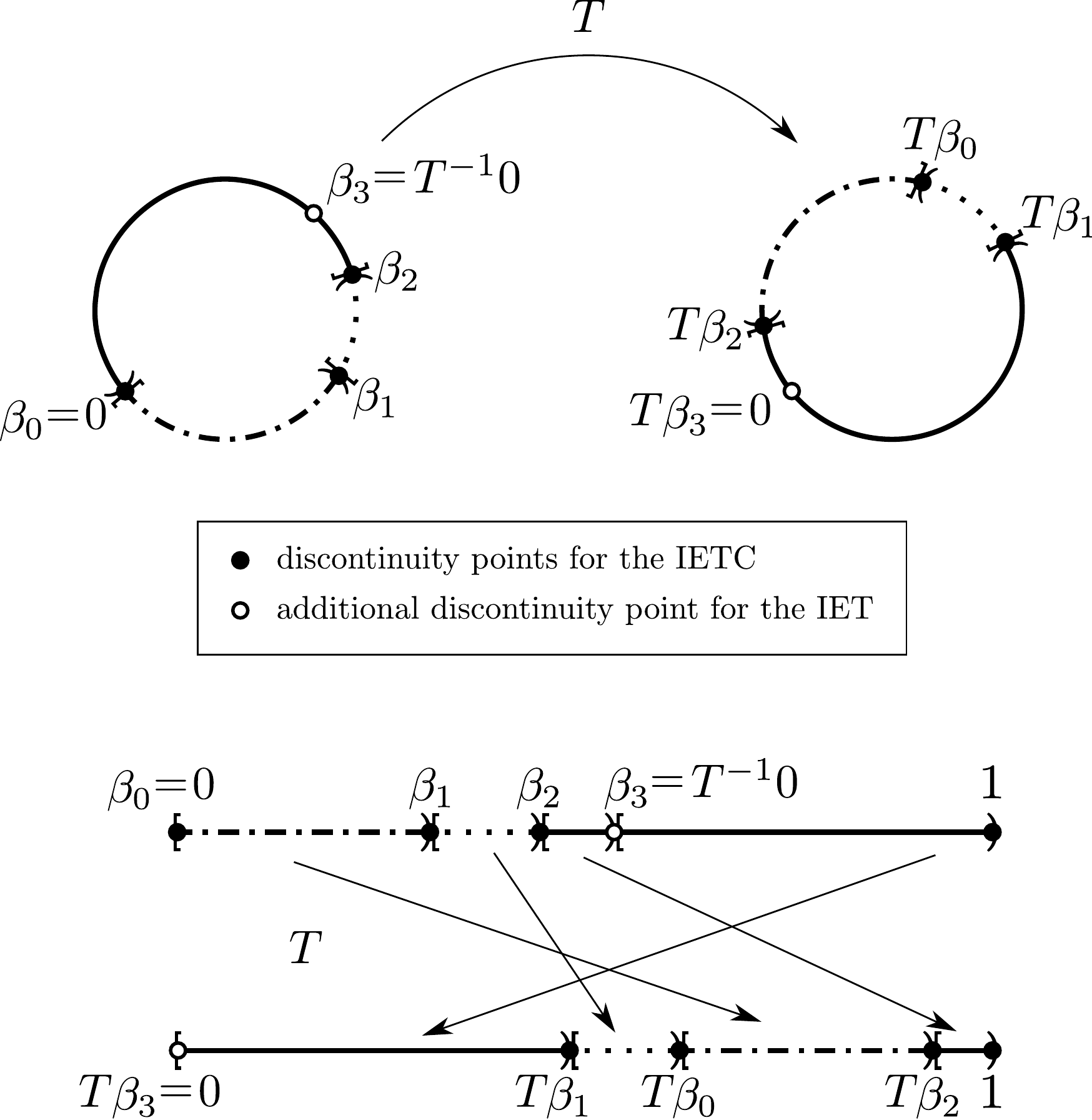}
\caption{IET obtained from an IETC}
\label{ietandIETC}
\end{figure}

\subsubsection{IETs of two intervals}
If $\alpha$ is an irrational number, then we denote by $Tx=x+\alpha$ the corresponding
irrational rotation on $(\mathbb{T}, \mathcal{B}(\mathbb{T}), m)$. The circle $\mathbb{T}=\mathbb{R}/\mathbb{Z}$ is identified with the interval $[0,1)$, the measure $m$ is Lebesgue measure inherited from $[0,1)$. Rotation on the circle is an exchange of two intervals. 

For an irrational $\alpha\in\mathbb{T}$ let $\{q_n\}$ stand for the sequence of its denominators, i.e.
\begin{align*}
p_0=0,\ p_1=1,\ p_{n+1}&=a_{n+1}p_n+p_{n-1},
\\
q_0=1,\ q_1=a_1,\ q_{n+1}&=a_{n+1}q_n+q_{n-1}
\end{align*}
and $[0;a_1,a_2,\dots]$ denotes the continued fraction expansion of $\alpha$.
\begin{df}
Let $\alpha\in\mathbb{T}$ be irrational. It has \emph{bounded partial quotients} if there exists $M>0$ such that $a_n<M$ for all $n\in\mathbb{N}$.
\end{df}

\subsection{Rauzy induction}\label{se:2.5}
Recall the definition of the Rauzy induction map $\mathcal{R}$ on the space of IETs which fulfill the IDOC (the algorithm was introduced and developed by G.~Rauzy and W.~A.~Veech in~\cite{Rauzy79,Veech82}). Let us denote this space by $\mathbf{\Delta}$. For a given IET $T$ exchanging $r$ intervals represented by the triple $(\underline{\lambda},\pi_0,\pi_1)$, set $j_0=\pi_0^{-1}(r)$, $j_1=\pi_1^{-1}(r)$, $I^{(1)}=[0,1-\min(\lambda_{j_0},\lambda_{j_1}))$ and let $\mathcal{R}(T)$ be the induced map on $I^{(1)}$. Due to the IDOC, $\lambda_{j_0}\neq\lambda_{j_1}$. Moreover, we obtain again an IET of $r$ intervals. Let
\begin{equation*}
A(T)=\begin{cases}
      & I+E_{j_0,j_1}\text{ if }\lambda_{j_0}<\lambda_{j_1}, \\
      & I+E_{j_1,j_0}\text{ if }\lambda_{j_1}<\lambda_{j_0},
\end{cases}
\end{equation*}
where $I$ is the identity matrix and $E_{i,j}$ denotes the matrix whose all entries are equal to $0$ except for the $(i,j)$ one which is equal to $1$. This defines the \emph{Rauzy cocycle} $A\colon \mathbf{\Delta} \to SL(r,\mathbb{Z})$ (see~\cite{Zorich97}). The process of inducing on subintervals chosen as described above, can be repeated infinitely many times. Therefore we define $(\underline{\lambda}^{(n)},\pi_0^{(n)},\pi_1^{(n)})=\mathcal{R}^n(\underline{\lambda},\pi_0,\pi_1)$ and $I^{(n)}=[0,1-\min(\lambda_{j_0}^{(n-1)},\lambda_{j_1}^{(n-1)}))$ for $n\geq 0$. The IDOC assures that $\lambda_{j_0}^{(n-1)}$ and $\lambda_{j_1}^{(n-1)}$ are never equal. 
The set of all combinatorial data accessible from the initial one by applying Rauzy induction is called a \emph{Rauzy class}.

\subsubsection{Operations on towers}
Denote by $I_j^{(n)}$, $j=1,\dots,r$, the subintervals exchanged by $\mathcal{R}^nT$. These intervals determine a partition of the given interval $I$ into \emph{towers} $H_j^{(n)}$ ($j=1,\dots,r$), where
\begin{equation}\label{wieze}
H_j^{(n)}=\bigcup_{k=0}^{h_j^{(n)}-1}T^kI_j^{(n)}
\end{equation}
and $h_j^{(n)}$ is the common first return time to the interval $I^{(n)}$ for the points from $I_j^{(n)}$. We call the sets $H_j^{(n)}$ \emph{towers} for $\mathcal{R}^nT$ and the sets $T^kI_j^{(n)}$ the \emph{floors} of the tower $H_j^{(n)}$. Note that once we have fixed $n$, all the floors of all the towers for $\mathcal{R}^nT$ are disjoint:
\begin{equation*}
T^{k_1}I_{j_1}^{(n)} \cap T^{k_2}I_{j_2}^{(n)}=\emptyset
\end{equation*}
for $1\leq j_i\leq r$, $0 \leq k_i \leq h_{j_i}^{(n)}-1$ ($i=1,2$) such that $(j_1,k_1)\neq(j_2,k_2)$.

By \emph{cutting} the tower $H_j^{(n)}$ at the point $x \in I^{(n)}$ we will mean refining the partition into the floors of the towers as follows: if $x \in I_j^{(n)}$, we add to the set of the partition points the set $\left\{x,Tx,\dots,T^{h_j^{(n)}-1}x\right\}$ (see Fig.~\ref{cut}).


\begin{figure}[ht]
\centering
\includegraphics[height=100pt]{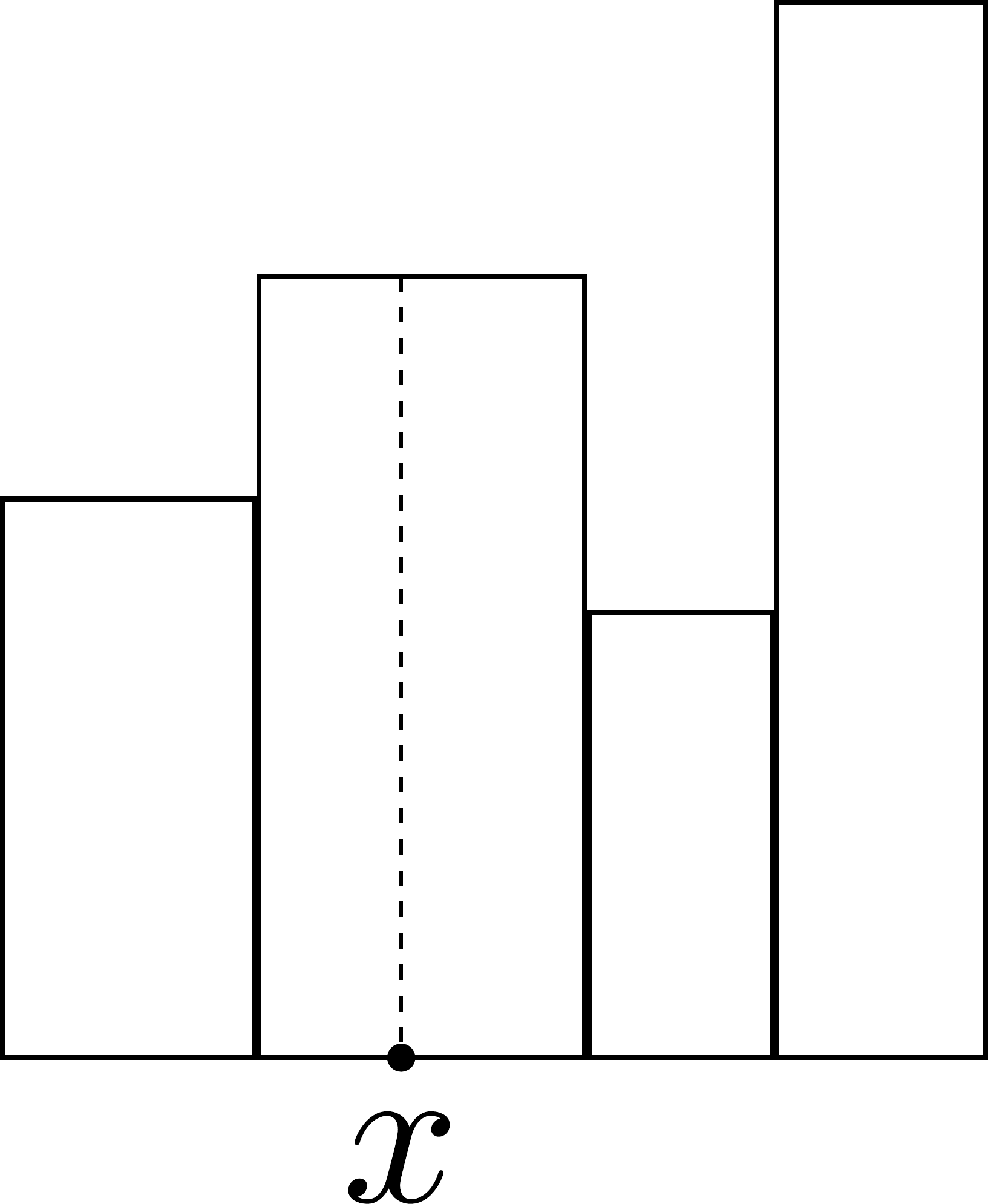}
\caption{Towers cut at $x$}
\label{cut}
\end{figure}

\subsubsection{Rauzy heights cocycle}
Let $\underline{h}^{(0)}$ be the column vector $(1,\dots,1)\in\mathbb{Z}^r$ and $h^{(n)}$ the column vector with heights of the towers for the $n$-th step of Rauzy induction as its entries.  Then we have $h^{(n)}=A(\mathcal{R}^{(n-1)}(T))h^{(n-1)}$ and, denoting by $A^{(n)}$ the product of matrices along the $\mathcal{R}$-orbit of $T$:
\begin{equation*}
A^{(n)}=A(\mathcal{R}^{n-1}(T))\cdot A(\mathcal{R}^{n-2}(T))\cdot \dots \cdot A(\mathcal{R}(T))\cdot A(T),
\end{equation*}
we get
\begin{equation}\label{kocwys}
h^{(n)}=A^{(n)}(1,\dots,1).
\end{equation}
It is the transpose of the cocycle which appears in~\cite{Veech82} and~\cite{Zorich96}, i.e. we can express also the lengths vectors for the induced transformations in terms of the Rauzy cocycle:
\begin{equation*}
\underline{\lambda}^{(n)}=((A^{(n)}(T))^T)^{-1}\underline{\lambda}^{(0)}.
\end{equation*}
For $n\in \mathbb{N}$ let 
\begin{equation*}
h_{\min}^{(n)}=\min_{1\leq j\leq r}h_j^{(n)}
\end{equation*}
and
\begin{equation*}
h_{\max}^{(n)}=\max_{1\leq j\leq r}h_j^{(n)}.
\end{equation*}

\subsection{IETs of periodic type}\label{se:2.6}
\begin{df}
We say that IET $T$ is of \emph{periodic type} if the following two conditions hold:
\begin{itemize}
\item[a)]
the sequence $A(T),\ A(\mathcal{R}T), \dots, A(\mathcal{R}^nT)$ is periodic with some period $p>0$, i.e. $A(\mathcal{R}^{n+p}T)=A(\mathcal{R}^{n}T)$ for all $n\in\mathbb{N}$;
\item[b)]
the period matrix $A^{(p)}(T)$ has strictly positive entries.
\end{itemize}
\end{df}

Examples of IETs of periodic type can be constructed by choosing a closed path on the Rauzy class (for the details we refer to~\cite{SU05}). Moreover, every IET of periodic type can be obtained this way.

If the matrix $B\in SL(r,\mathbb{Z})$  has strictly positive entries, introduce the following quantity (in~\cite{Veech81} there was introduced an analogous definition where the ratios of the entries in the rows was maximized):
\begin{equation*}
\overline{\nu}(B)=\max_{i,j,l}\frac{B_{ij}}{B_{lj}}.
\end{equation*}
Then if $h^{(m+n)}=B\cdot h^{(n)}$, it follows that
\begin{equation}\label{balanced}
\frac{1}{\overline{\nu}(B)}\leq \frac{h_i^{(n+m)}}{h_j^{(n+m)}} \leq \overline{\nu}(B).
\end{equation}
In the case of periodic IETs with period $p$ we will use this fact for $B=A^{(p)}(T)$.

Let $\mathcal{P}$ be a partition of some interval into subintervals. By $\min\mathcal{P}$ and $\max\mathcal{P}$ we denote the minimum and the maximum length of the subintervals determined by this partition. By $\mathcal{P}(a;x_1,\dots,x_k;b)$ we denote the partition of the interval $[a,b)$ by the points $x_1,\dots,x_k$. When there is no ambiguity (e.g. when the considered interval is $[0,1)$) we drop the dependence on the interval $[a,b)$ and write $\mathcal{P}(\{x_i\colon 1\leq i\leq k\})$ for $\mathcal{P}(a;x_1,\dots,x_k;b)$.

\subsubsection{Balanced partition lengths}\label{se:2.6.1}
\begin{df}\label{bal}
Let $T\colon [0,1) \to [0,1)$ be an IET with discontinuity points $0=\beta_0<\beta_1<\dots<\beta_{r-1}<\beta_r=1$.
We say that it has \emph{balanced partition lengths with constant $c>0$} if for any $j\geq 1$ two following conditions hold:
\begin{itemize}
\item[(i)]
\begin{equation*}
\frac{1}{cj}\leq \min\mathcal{P}_j \leq \max\mathcal{P}_j \leq\frac{c}{j},
\end{equation*}
where $\mathcal{P}_j=\mathcal{P}(\{T^{-k}\beta_i\colon 1\leq i\leq r-1,0\leq k\leq j-1\})$;
\item[(ii)]
\begin{multline*}
\frac{1}{cj}\leq \min\mathcal{P}(\{T^{-k+l}\beta_i\colon 0\leq l\leq j-1\})
\\
\leq \max\mathcal{P}(\{T^{-k+l}\beta_i\colon 0\leq l\leq j-1\})\leq\frac{c}{j}
\end{multline*}
for all $0\leq i\leq r-1$ and $0\leq k\leq j-1$.
\end{itemize}
\end{df}

\begin{uw}
Notice that in (i) the partitions under consideration are generated by all the discontinuities whereas in (ii) we treat each discontinuity separately. Moreover, in (ii) we iterate discontinuities both backwards and forwards as opposed to (i) where only backward iterations are taken into account.
\end{uw}

\begin{uw}
Let $T\colon [0,1)\to[0,1)$ be an IET. If the conditions (i) and (ii) of the above definition are fulfilled with different constants, $c_1$ and $c_2$ respectively, then $T$ has balanced partition lengths with constant $c=\max\{c_1,c_2\}$.
\end{uw}

\begin{uw}\label{kolkoodcinek}
Definition~\ref{bal} of balanced partition lengths for IETs can be easily transferred to the case of IETCs. Notice that an IET has balanced partition lengths whenever the corresponding IETC has balanced partition lengths.
\end{uw}

\subsection{Special flows}\label{se:2.7}
Let $T\colon (X,\mathcal{B},\mu)\to(X,\mathcal{B},\mu)$ be an ergodic automorphism of a standard probabilistic space and let $f\in L^1 (X,\mathcal{B},\mu)$ be a strictly positive function. Let $X^f=\{(x,t)\in X\times\mathbb{R} \colon 0\leq t<f(x)\}$. Under the action of the \emph{special flow} $T^f$ each point of $X^f$ moves upwards vertically at the unit speed and we identify the points $(x,f(x))$ and $(Tx,0)$. We put
\begin{displaymath}
f^{(n)}(x) = \left\{ \begin{array}{ll}
f(x)+f(Tx)+\ldots+f(T^{n-1}x) & \textrm{if $n>0$}\\
0 & \textrm{if $n=0$}\\
-(f(T^nx)+\ldots+f(T^{-1}x)) & \textrm{if $n<0$}.
\end{array} \right.
\end{displaymath}
For a formal definition of the special flow, consider the skew product $S_{-f}\colon (X\times \mathbb{R},\mu\otimes m)\to(X\times \mathbb{R},\mu\otimes m)$, where $m$ stands for the Lebesgue measure, given by the equation
\begin{equation*}
S_{-f}(x,r)=(Tx,r-f(x))
\end{equation*}
and let $\Gamma^f$ stand for the quotient space $X\times\mathbb{R}/\sim$, where the relation $\sim$ identifies the points in each orbit of the action on $X\times\mathbb{R}$ by $S_{-f}$. Let $\sigma=\{\sigma_t\}_{t\in\mathbb{R}}$ denote the flow on $(X\times \mathbb{R},\mu\otimes m)$ given by
\begin{equation*}
\sigma_t(x,r)=(x,r+t).
\end{equation*}
Since $\sigma_t \circ S_{-f}=S_{-f}\circ \sigma_t$, we can consider the quotient flow of the action $\sigma$ by the relation $\sim$. This is the special flow over $T$ under $f$ denoted by $T^f$.

\section{Representation as a special flow}\label{reduction}
We will construct a class of flows on surfaces of genus equal or greater than two, with a finite number of singularities, and with no saddle connections. We recall that a saddle connection is a flow orbit which joints two (not necessarily distinct) saddles. In case when the orbit joints the same saddle, the saddle connection is called a \emph{loop saddle connection}.


Consider a closed 1-form $\omega$ on a closed, compact, orientable surface of genus $\mathbf{g}$. Since $\omega$ is closed, it is locally equal to $dH$ for some real-valued function $H$. The flow associated to $\omega$ is locally given by the solutions of the system of differential equations $\dot{x}=\pd{H}{y}$, $\dot{y}=-\pd{H}{x}$. Assume that this flow has a finite number of nondegenerate critical points and that there are no saddle connections. Flows generated by such forms were shown to be minimal by A.~G.~Mayer in~\cite{mayer43}. Moreover, they are isomorphic to special flows over interval exchange transformations of $4\mathbf{g}-4$ intervals on a circle - a closed curve on the surface transversal to the flow. The roof function is smooth, except for a finite number of points (which are the first intersections of the backward orbits of the singularities of the flow with the transversal), where it has logarithmic singularities. The set of such points coincides with the discontinuities of the interval exchange on the \emph{circle} (see the left part of Figure~\ref{figkolko}). For more information on representing flows this way see Section 1.1. in~\cite{Zorich94u}, for the calculations in the case of a torus, see Section 4 in~\cite{Arnold91} and in the general case see Section 3 in~\cite{Kochergin76}.
\begin{figure}[ht]
\centering
\includegraphics[height=150pt]{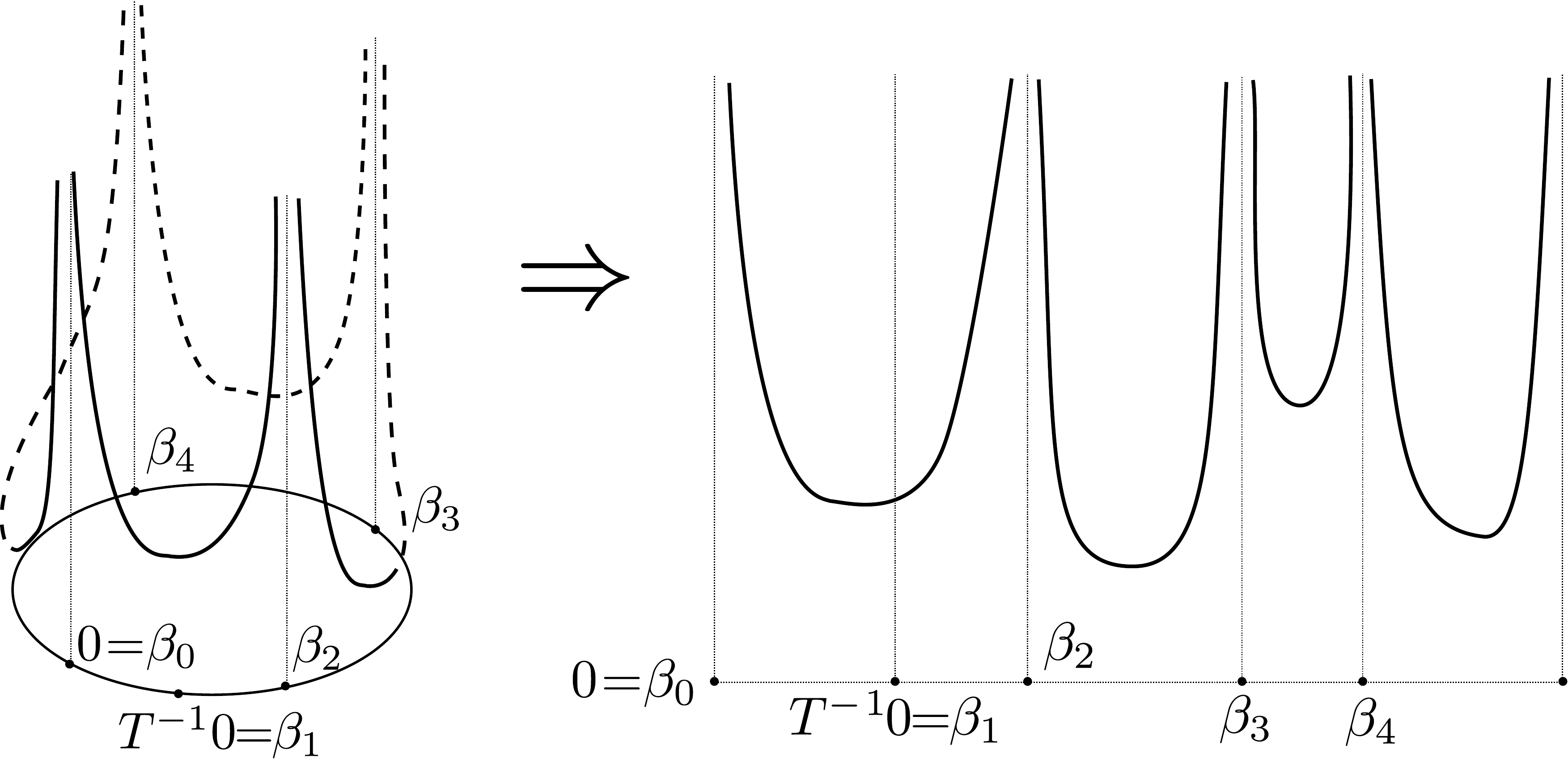}
\caption{Opening the closed transversal}
\label{figkolko}
\end{figure}

In order to use some properties of the IETs on the interval $[0,1)$, we proceed as in Remark~\ref{uw:ietandIETC} (see Figures~\ref{ietandIETC} and~\ref{figkolko}). This results in that one of the discontinuities of the IET (the point which is mapped to $0$ by the IET) is not a discontinuity of the roof function. Both one-sided limits at this point are finite and equal.
It is also reflected in the formula for the roof function which is of the form $f+g$, where $f$ is given by
\begin{equation}\label{fczyste}
f(x)=\sum_{0\leq i\leq r-1}\left(-c^{+}_{i}\log\{x-{\beta}_{i}\}\right)+\sum_{0\leq i\leq r-1} \left(-c^{-}_{i+1}\log\{{\beta}_{i+1}-x\}\right),
\end{equation}
where ${\beta}_i$ for $0\leq i\leq r-1$ are the discontinuity points of the interval exchange transformation $T$ on the interval and $g$ is piecewise absolutely continuous (it is continuous whenever $f$ is so), such that $\min (f+g)>0$. The function $g$ can be represented as a sum $g=g_1+g_2+g_3$, where $g_1$ is absolutely continuous with $g_1(0)=\lim_{x \to 1}g_1(x)$, $g_2$ is linear and $g_3$ is piecewise constant and is continuous whenever $g$ is so. The constants $c_{i+1}^-$, $c_i^+$ for $0\leq i\leq r-1$ are positive, except for $c_{i_0}^{+}=c_{i_0+1}^{-}=0$, where $i_0=\pi_0\circ\pi_1^{-1}(r)=\pi_0\circ \pi_1^{-1}(1)-1$ ($\pi_0$ and $\pi_1$ are the combinatorial data defining $T$, for the definition see Section~\ref{przek}). 

Moreover, since the flow has no saddle connections we have $c_{i}^{+}=c_{i}^{-}$ for $1\leq i \leq r-1$ and $c_0^{+}=c_r^-$ in the definition of $f$. Therefore
\begin{equation}\label{symetry}
\sum_{i=0}^{r-1}c_i^+=\sum_{i=0}^{r-1}c_{i+1}^{-}.
\end{equation}

If the condition~\eqref{symetry} is satisfied for the roof function which is of the form $f+g$ (with $f$ given by~\eqref{fczyste} and $g$ as above), the roof function is said to have \emph{logarithmic singularities of symmetric type} (otherwise they are cold \emph{asymmetric}). All results from Section~\ref{rigidity} hold for the roof function with singularities of \emph{both symmetric and asymmetric} type. In Section~\ref{selfsim} we need to assume that the singularities are of \emph{symmetric} type.\footnote{Singularities of asymmetric type may appear if we admit loop saddle connections.}

To keep the notation as simple as possible, in the remainder of the paper we will additionally assume that $c_{i_0}^+$ and $c_{i_0+1}^{-}$ are strictly positive and we will deal with IETs on the interval $[0,1)$. All the results remain true (with notational changes only) for IETs on the circle which corresponds to the fact that $c_{i_0}^+=c_{i_0+1}^{-}=0$ (see also Remark~\ref{kolkoodcinek}).

\section{Absence of partial rigidity}\label{rigidity}

\subsection{Main result and outline of the proof}
The main result of this section is the following.
\begin{tw}\label{tw12}
Let $T\colon[0,1)\to[0,1)$ be an IET with discontinuity points $0=\beta_0<\beta_1<\dots<\beta_{r-1}<\beta_r=1$ and balanced partition lengths with constant $c>0$. Let
\begin{equation}
f(x)=\sum_{0\leq i\leq r-1}\left(-c^{+}_{i}\log\{x-\beta_{i}\}\right)+\sum_{0\leq i\leq r-1} \left(-c^{-}_{i+1}\log\{\beta_{i+1}-x\}\right),
\end{equation}
where $c_{i+1}^{-}, c_{i}^{+}> 0$ for $0\leq i\leq r-1$ and $g$ is a piecewise absolutely continuous function which is always continuous whenever $f$ is continuous and satisfies the condition $\min (f+g)>0$. Then the special flow $T^{f+g}$ over $T$ under $f+g$ is not partially rigid.
\end{tw}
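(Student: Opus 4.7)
The proof is by contradiction. Assume $T^{f+g}$ is partially rigid along some $t_n \to \infty$ with rigidity constant $u \in (0,1]$. By Remark~\ref{czesc}, after extracting a subsequence, $T^{f+g}_{t_n}$ converges in the weak operator topology to $u \cdot \mathrm{Id} + (1-u) K$ for some $K \in J(T^{f+g})$. The plan is to translate this weak limit into a Birkhoff-sum concentration statement at the level of the base IET, and then to contradict it using the logarithmic singularities of $f$ together with the balanced partition property.

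\textbf{Step 1 (reduction to Birkhoff sums).} Testing the weak convergence against indicators of thin vertical rectangles $A \times [0,\eta] \subset X^{f+g}$ and using the special flow formula $T^{f+g}_t(x,s) = (T^{N(x,s,t)} x,\, s + t - (f+g)^{(N(x,s,t))}(x))$, one obtains (by a standard unwrapping, analogous to arguments in~\cite{FL08}) integers $q_n \to \infty$ with $q_n/t_n \to 1/\int(f+g)\, dm$, and Borel sets $D_n \subseteq [0,1)$ with $\liminf_n m(D_n) \geq \delta$ for some $\delta(u) > 0$, such that
\[
\bigl| (f+g)^{(q_n)}(x) - t_n \bigr| \leq C_0, \qquad x \in D_n,
\]
for a constant $C_0$ independent of $n$. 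Since $g$ has bounded variation and the balanced partition property (condition (i) of Definition~\ref{bal}) is the IET-analogue of bounded continued fraction quotients, a Denjoy--Koksma inequality gives
\[
\bigl\| g^{(q_n)} - q_n \textstyle\int g\, dm \bigr\|_\infty \leq C_g V(g)
\]
uniformly in $n$. Setting $a_n := t_n - q_n \int g\, dm$, we thus have
\[
\bigl| f^{(q_n)}(x) - a_n \bigr| \leq C_1, \qquad x \in D_n,
\]
with $C_1 := C_0 + C_g V(g)$.

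\textbf{Step 2 (logarithmic obstruction).} The proof is completed by showing that for any sequence $\{a_n\} \subset \mathbb{R}$ and any constant $C > 0$,
\[
m\bigl( \{ x \in [0,1) : |f^{(q_n)}(x) - a_n| \leq C \}\bigr) \longrightarrow 0, \qquad n \to \infty,
\]
which contradicts $m(D_n) \geq \delta > 0$. The analysis is performed cell by cell on the partition $\mathcal{P}_{q_n}$ of $[0,1)$ generated by $\{T^{-k}\beta_i : 0 \leq k \leq q_n - 1,\ 0 \leq i \leq r-1\}$. Condition (i) of Definition~\ref{bal} gives $|I| \asymp 1/q_n$ for every cell $I = (x_L, x_R)$. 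At each endpoint $x_\star \in \{x_L, x_R\}$ exactly one term in $f^{(q_n)}$ supplies a log pole with strictly positive coefficient $c_\star$, so that on $I$
\[
f^{(q_n)}(x) = -c_L \log|x-x_L| - c_R \log|x-x_R| + R_I(x),
\]
where $c_L, c_R > 0$ and $R_I$ gathers the remaining $q_n - 2$ terms. Condition (ii) of Definition~\ref{bal}, applied individually to each discontinuity, controls the spacing of the orbit of $x$ with respect to each $\beta_i$ and yields a manageable bound on the oscillation of $R_I$ on $I$. Since $c_L, c_R > 0$, $f^{(q_n)}|_I \to +\infty$ at both endpoints of $I$, and the width of the level set $\{|f^{(q_n)} - a_n| \leq C\} \cap I$ is of order $e^{-(a_n - \overline{R}_I)/c_{\max}}$. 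Moreover the hypothesis that this cell contains some $x \in D_n$ forces $a_n \gtrsim \overline{R}_I + (c_L + c_R) \log q_n$. Summing over the cells meeting $D_n$, and combining with a counting argument on the possible values of $\overline{R}_I$, yields the required vanishing.

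\textbf{Main obstacle.} Step 2 is the core technical point. The subtlety is that $R_I$ is not uniformly bounded across cells: its typical value is of size $q_n \int f\, dm$ and its fluctuation across cells is of order $\log q_n$, so one cannot simply absorb it into a shifted constant. Instead, one has to combine the per-cell level-set estimate with a careful accounting of how many cells can have $\overline{R}_I$ close enough to the prescribed level $a_n$. Both conditions of Definition~\ref{bal} enter in an essential way: (i) gives the geometric control on cell sizes and the Denjoy--Koksma estimate for $g$, while (ii) isolates the endpoint log poles on each cell from the ``second-closest'' approaches of the orbit to each discontinuity, making the local analysis of $R_I$ tractable. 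This is precisely why balanced partition lengths is the right hypothesis for the theorem.
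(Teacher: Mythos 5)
Your proposal has two genuine gaps, one in each step, and the second step also misconceives where the difficulty lies compared with the paper's actual strategy.

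In Step 1 you claim that partial rigidity along $\{t_n\}$ yields a \emph{single} integer $q_n$ and sets $D_n$ of measure uniformly bounded below with $|(f+g)^{(q_n)}(x)-t_n|\leq C_0$ on $D_n$. This is not what partial rigidity gives. The reduction the paper uses is Lemma~\ref{lm5} (from~\cite{FL06}), which says only that the set
\[
\bigl\{x:\bigl(\exists\,j\in\mathbb{N}\bigr)\ |(f+g)^{(j)}(x)-t_n|<\varepsilon\bigr\}
\]
has $\liminf$ of measure $\geq u$; the number of iterates $j$ may — and in general does — depend on $x$. Converting this into a single $q_n$ valid on a positive-measure set is itself a substantial claim that does not follow from a ``standard unwrapping,'' and the bulk of the paper's proof (the construction of the adapted partition $\Delta_1,\dots,\Delta_n$, Lemmas~\ref{partycja}, \ref{lm4.7} and~\ref{lm9}) is exactly the machinery for handling the fact that different $x$ require different $j$. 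Relatedly, your Denjoy--Koksma claim $\|g^{(q_n)}-q_n\!\int g\|_\infty\leq C_g V(g)$ for \emph{arbitrary} $q_n$ is false for IETs; such bounds hold only along special times (e.g.\ heights from Rauzy induction). The paper's Lemma~\ref{wn8} deliberately proves something much weaker — a small-oscillation bound for $g^{(n)}$ only on each cell $\Delta_i^n$ of the dynamically refined partition — and that local statement is what the argument actually uses.

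In Step 2, your picture of $f^{(q_n)}$ on a cell as ``two endpoint log poles plus a remainder $R_I$ with manageable oscillation'' does not support the conclusion. Endpoint blow-up alone controls the level set only near the endpoints; in the interior of the cell $R_I$ carries a derivative of order $q_n$ that you are not tracking, and your proposed ``counting argument on the values of $\overline{R}_I$'' is not a workable substitute. The paper's Lemma~\ref{lm3} proves something sharper and differently structured: on a proportion $\eta<1$ of \emph{every} cell $\Delta_i^j$, $|f'^{(j)}|>\delta j$. Its proof sums the $f'$-increments over \emph{all} $j$ iterates of a small subinterval $[x_0,\overline x]$, uses the balanced-partition property to compare those iterates with the points $T^{-k}\beta_i$ (to get the lower bound~\eqref{dwa}), and uses the convexity of both $f'$ and $f'''$ on each continuity interval to control the contributions of the gaps (inequalities~\eqref{cztery}--\eqref{piec}). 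None of this is captured by an endpoint-pole-plus-remainder decomposition. The companion Lemma~\ref{lm4} then converts the derivative lower bound into the level-set measure bound by a mean-value-theorem argument on convex monotone pieces — there is no counting over cells by the value of a remainder. So the mechanism you identify as the ``main obstacle'' is not the one the paper confronts, and the route you sketch does not close it.
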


Our main tool to prove Theorem~\ref{tw12} will be the following lemma which gives a necessary condition for a special flow to be partially rigid.
\begin{lm}[\cite{FL06}]\label{lm5}
Let $T\colon (X,\mathcal{B},\mu) \to (X,\mathcal{B},\mu)$ be an ergodic automorphism and $f\in L^1(X,\mu)$ be a positive function such that $f\geq C > 0$. Suppose that the special flow $T^f$ is partially rigid along the sequence $\{t_n\}_{n\in\mathbb{N}}$ $(t_n \to +\infty)$. Then there exists $0<u\leq 1$ such that for every $0<\varepsilon<C$ we have
\begin{equation}\label{eq:ve}
\liminf_{n \to \infty} \mu \{ x \in X \colon \left( \exists\ {j\in \mathbb{N}}\right) |f^{(j)}(x)-t_n|<\varepsilon\}\geq u.
\end{equation}
\end{lm}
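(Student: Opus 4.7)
The plan is to apply the partial rigidity hypothesis to a thin horizontal strip at the base of the special flow. For $0<\varepsilon<C$ set $A_\varepsilon:=\{(x,s)\in X^f\colon 0\le s<\varepsilon\}$; because $\varepsilon<C\le f$, this is a genuine (non-wrapping) subset of $X^f$, and with respect to the normalized invariant product measure $\mu^f$ of $T^f$ one has $\mu^f(A_\varepsilon)=\varepsilon/\int f\,d\mu$.

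Next I would translate the condition $(x,s)\in A_\varepsilon\cap T^f_{-t_n}A_\varepsilon$ into a condition on the Birkhoff sums of $f$. By the definition of the special flow, for each $(x,s)\in X^f$ there is a unique $j=j(x,s,n)\in\mathbb{N}$ with $f^{(j)}(x)\le s+t_n<f^{(j+1)}(x)$, and then $T^f_{t_n}(x,s)=(T^jx,\,s+t_n-f^{(j)}(x))$. Requiring the second coordinate to lie in $[0,\varepsilon)$, combined with $s\in[0,\varepsilon)$, yields $t_n-\varepsilon<f^{(j)}(x)<t_n+\varepsilon$. Setting
\begin{equation*}
E_n:=\{x\in X\colon \exists\, j\in\mathbb{N},\ |f^{(j)}(x)-t_n|<\varepsilon\},
\end{equation*}
this shows $A_\varepsilon\cap T^f_{-t_n}A_\varepsilon\subseteq E_n\times[0,\varepsilon)$ and hence $\mu^f(A_\varepsilon\cap T^f_{-t_n}A_\varepsilon)\le \varepsilon\,\mu(E_n)/\int f\,d\mu$.

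Finally, applying partial rigidity of $T^f$ along $\{t_n\}$ with rigidity constant $u\in(0,1]$ (which comes directly from the hypothesis and is independent of $\varepsilon$) to the set $A_\varepsilon$ produces $\liminf_{n\to\infty}\mu^f(A_\varepsilon\cap T^f_{-t_n}A_\varepsilon)\ge u\mu^f(A_\varepsilon)=u\varepsilon/\int f\,d\mu$. Combining this lower bound with the upper bound from the previous paragraph and dividing by $\varepsilon/\int f\,d\mu$ yields $\liminf_{n\to\infty}\mu(E_n)\ge u$, which is exactly~\eqref{eq:ve}. The argument is essentially a direct computation; the only place requiring attention is the inclusion above, and the role of the hypothesis $\varepsilon<C$ is precisely to keep $A_\varepsilon$ an honest non-wrapping strip so that both the measure formula and the translation into the $f^{(j)}$-condition come out cleanly.
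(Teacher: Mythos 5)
Your argument is correct: the paper itself only quotes this lemma from~\cite{FL06} without reproving it, and your proof via the thin strip $A_\varepsilon=X\times[0,\varepsilon)$ at the base — using $\varepsilon<C$ to make the strip non-wrapping, translating the return condition into $|f^{(j)}(x)-t_n|<\varepsilon$, and comparing the partial-rigidity lower bound with the inclusion $A_\varepsilon\cap T^f_{-t_n}A_\varepsilon\subseteq E_n\times[0,\varepsilon)$ — is exactly the standard argument used there. The only point worth a passing remark is that the index $j$ you produce is automatically $\geq 1$ once $t_n>\varepsilon$, so it genuinely lies in $\mathbb{N}$; this is harmless under the $\liminf$.
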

\qed 

Before going into detail let us give the outline of the proof of Theorem~\ref{tw12}. The roof function of the special flow we deal with is a sum of $f$ and $g$. These two functions are of a very different character and this is why we deal with them separately. 

We begin by considering the function $f$ only (i.e. we act as if $g\equiv 0$). In order to apply Lemma~\ref{lm5}, we show first that arbitrary big proportion (less than one) of points from each continuity interval for the base transformation is such that the derivative $f'^{(j)}$ is large enough (see Lemma~\ref{lm3}). The most important property used in the proof of Lemma~\ref{lm3} is that the interval exchange transformation in the base has balanced partition lengths. This will allow us later (in the proof of Lemma~\ref{lm9}) to conclude that the condition~\eqref{eq:ve} does not hold.

What we do next is to perturb the roof function $f$. Every absolutely continuous function $g$ on $[0,1)$ can be decomposed into the sum $g_1+g_2+g_3$, where $g_1$ is absolutely continuous with $g_1(0)=\lim_{x \to 1}g_1(x)$, $g_2$ is linear and $g_3$ is piecewise constant and is continuous whenever $g$ is. 

A perturbation by a linear function has no influence on the claim of Lemma~\ref{lm3} due to Remark~\ref{uw:4.1}. Moreover, Lemma~\ref{wn8} will allow us later (in the proof of Lemma~\ref{lm9}) to conclude that a perturbation by an arbitrary absolutely continuous function doesn't change the situation either.

The next step is to construct a partition of the interval $[0,1)$ (see Lemma~\ref{partycja}). In the proof of Lemma~\ref{lm9} we will work with each subinterval of this partition separately. The situation in each of these subintervals is presented in Figure~\ref{uklad}. As we can see in the figure, the functions $f^{(j)}$ whose graphs cross the $2\varepsilon$-strip around $t$ can be divided into two groups: we treat separately the function which is ``in the middle'' (denoted with a solid line in the figure) and the functions which are at its both sides (denoted with the dashed lines). We apply Lemma~\ref{lm3} to the function which is ``in the middle'' to see that there is ``not too much of it'' in the $\varepsilon$-strip around $t$. The functions which are at its sides cannot fill ''too much'' of the strip either due to convexity (see Figure~\ref{ka} and Lemma~\ref{pochodna}).

\subsection{Technical details}
Let $T$ be an IET with discontinuities $0=\beta_0 < \beta_1 < \dots <\beta_{r-1}<\beta_r=1$. Assume that $T$ fulfills the IDOC and $T$ has balanced partition lengths with constant $c>0$. For $j\geq 1$ consider the partition $\mathcal{P}_j$ (see Definition~\ref{bal}). Denote the partition points in the increasing order by
\begin{equation*}
0=x_0^j<x_1^j<\dots<x_{(r-1)j}^j<1.
\end{equation*}
Let the function $f$ be given by equation~\eqref{fczyste} and $g$ be as described in Section~\ref{reduction}.

Notice that $x_i^j$ ($0 \leq i \leq (r-1)j$) are all discontinuity points of the function ${f}^{(j)}$ ($j\geq 1$). Note also that $f^{(j)}{'}=f'^{(j)}$ whenever both derivatives are well-defined and the sets of discontinuity points of the functions $f'^{(j)}$ and $f^{(j)}$ are equal. Now we will study some basic properties of the function $f'$.
For $j\geq 1$ and $0 \leq i \leq (r-1)j$ let $\Delta^j_{i}=(x^j_i,x^j_{i+1})$ and $d^j_i=x^j_{i+1}-x^j_i$.\label{tajk}
\begin{lm}\label{pochodna}
For every $j\geq 2$ and $0\leq i \leq (r-1)j$ the function $f'^{(j)}$ is strictly increasing on $\Delta_i^j$ with $\lim_{{x\to x_i}^{+}} f'(x)=-\infty$, $\lim_{{x\to x_{i+1}}^{-}} f'(x)=\infty$. The same holds for $f'''^{(j)}(x)$.
\end{lm}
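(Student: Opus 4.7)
The plan is a direct computation. First I would write down the derivatives of $f$ in closed form,
\begin{equation*}
f'(x)=-\sum_{i=0}^{r-1}\frac{c_i^+}{\{x-\beta_i\}}+\sum_{i=0}^{r-1}\frac{c_{i+1}^-}{\{\beta_{i+1}-x\}},\qquad f''(x)=\sum_{i=0}^{r-1}\frac{c_i^+}{\{x-\beta_i\}^2}+\sum_{i=0}^{r-1}\frac{c_{i+1}^-}{\{\beta_{i+1}-x\}^2},
\end{equation*}
and observe the crucial fact that $f''>0$ on every continuity interval of $f$, since all $c_i^+,c_{i+1}^-$ are positive. Because $T$ is piecewise an isometry with unit Jacobian and, by construction of $\mathcal{P}_j$, the orbit $x,Tx,\dots,T^{j-1}x$ of any $x\in\Delta_i^j$ meets no discontinuity of $f$, the chain rule gives
\begin{equation*}
(f'^{(j)})'(x)=\sum_{k=0}^{j-1}f''(T^{k}x)>0,
\end{equation*}
which proves that $f'^{(j)}$ is strictly increasing on $\Delta_i^j$.

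Next I would handle the boundary limits. Each endpoint of $\Delta_i^j$ is either $0$, $1$, or of the form $T^{-k^{\ast}}\beta_{l^{\ast}}$ for some $k^{\ast}\in\{0,\dots,j-1\}$ and $l^{\ast}\in\{1,\dots,r-1\}$. Since $T^{k^{\ast}}$ is continuous and orientation-preserving on the open interval $\Delta_i^j$, as $x\to (x_i^j)^{+}$ the iterate $T^{k^{\ast}}x$ approaches $\beta_{l^{\ast}}$ from the right; hence $\{T^{k^{\ast}}x-\beta_{l^{\ast}}\}\to 0^{+}$, and the single summand $-c_{l^{\ast}}^+/\{T^{k^{\ast}}x-\beta_{l^{\ast}}\}$ contributing to $f'^{(j)}(x)$ diverges to $-\infty$, while all other summands (corresponding to other indices $k\neq k^{\ast}$ or to other $\beta_m\neq\beta_{l^{\ast}}$) stay bounded. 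This forces $f'^{(j)}(x)\to -\infty$. The right endpoint $x_{i+1}^j$ is treated symmetrically: the corresponding iterate approaches some $\beta_{l^{\ast\ast}}$ from the left, and the associated $+c_{l^{\ast\ast}}^-/\{\beta_{l^{\ast\ast}}-\cdot\}$ term blows up to $+\infty$. The edge cases $x_i^j=0$ and $x_{i+1}^j=1$ are handled identically, with $T^{0}x=x$ itself in the role of the divergent iterate.

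For the claim about $f'''^{(j)}$, the same proof applies once I verify two facts by direct differentiation. First,
\begin{equation*}
f''''(x)=6\sum_{i=0}^{r-1}\frac{c_i^+}{\{x-\beta_i\}^4}+6\sum_{i=0}^{r-1}\frac{c_{i+1}^-}{\{\beta_{i+1}-x\}^4}>0,
\end{equation*}
so the chain-rule argument again yields strict monotonicity of $f'''^{(j)}$ on $\Delta_i^j$. Second, the leading divergences of $f'''(x)=-2\sum_i c_i^+/\{x-\beta_i\}^3+2\sum_i c_{i+1}^-/\{\beta_{i+1}-x\}^3$ carry the same signs as those of $f'$ at the corresponding one-sided approach to a $\beta_l$, so the boundary analysis transfers verbatim.

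I do not anticipate a substantial obstacle; the argument is essentially bookkeeping. The only place requiring genuine care is verifying that as $x\to (x_i^j)^{+}$ the iterate $T^{k^{\ast}}x$ really approaches $\beta_{l^{\ast}}$ \emph{from the right} (and analogously from the left at $x_{i+1}^j$). This, however, is immediate from the fact that $T^{k^{\ast}}$, restricted to the continuity interval containing $\Delta_i^j$, is an increasing isometry and so preserves one-sided limits.
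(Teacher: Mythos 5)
Your proof is correct and follows essentially the same route as the paper's, which packages the same two observations (monotonicity of $h=f'$, resp.\ $h=f'''$, on each continuity interval $[\beta_i,\beta_{i+1})$, and the one-sided blow-up of $h$ at its endpoints) into an induction on $j$ via the cocycle identity $h^{(j+1)}(x)=h^{(j)}(Tx)+h(x)$ rather than your explicit chain-rule summation. The only cosmetic caveat is your claim that exactly one summand diverges at an endpoint of $\Delta_i^j$: since the IDOC governs only $\beta_1,\dots,\beta_{r-1}$ and not the orbit of $\beta_0=0$, two consecutive iterates $T^kx$ could approach singularities simultaneously, but as each $T^k$ is order-preserving on $\Delta_i^j$ all divergent summands carry the same sign, so the conclusion is unaffected.
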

\begin{proof}
We prove the statement by induction. The same arguments remain valid for both $h=f'$ and $h=f'''$. Since $h^{(j+1)}(x)=h^{(j)}(Tx)+h(x)$, the set of discontinuities for $h^{(j+1)}$ consists of two parts: the discontinuities of $h$ and the discontinuities of $h^{(j)}$ iterated backwards one time. The conclusion follows directly from the following two observations:
\begin{itemize}
\item
for every $0\leq i \leq r-1$ the function $h(x)$ is increasing on the interval $[{\beta}_i,{\beta}_{i+1})$,
\item
$\lim_{{x\to {\beta}_i}^{+}} h(x)=-\infty$ and $\lim_{{x\to {\beta}_{i+1}}^{-}} h(x)=\infty$.
\end{itemize}
\end{proof}
\begin{uw}\label{uw:4.1}
If we replace $f$ with $f+g_2$ (where $g_2$ is linear), the assertion of the above lemma remains true. 
\end{uw}
\begin{lm}\label{lm3}
For every $0<\eta<1$ there exists $\delta>0$ such that for every $j\geq 6c^2$, $0\leq i\leq (r-1)j$
\begin{equation*}
m(\{x \in \Delta^j_i \colon |f'^{(j)}(x)|>\delta j \})>\eta d^j_i.
\end{equation*}
\end{lm}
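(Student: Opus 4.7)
The plan is to exploit the strict monotonicity of $f'^{(j)}$ on $\Delta_i^j$ (Lemma~\ref{pochodna}) together with a lower bound on $f''^{(j)}$ coming from only two iterates.

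First, since $f'^{(j)}$ increases strictly on $\Delta_i^j$ from $-\infty$ to $+\infty$, the sublevel set $\{x \in \Delta_i^j : |f'^{(j)}(x)| \le \delta j\}$ is a single open subinterval $(a,b)$, so the good set has measure $A + B$ where $A := a - x_i^j$ and $B := x_{i+1}^j - b$. The goal is to show $A + B > \eta d_i^j$ once $\delta$ is chosen suitably small in terms of $\eta$, $c$, and $c_{\min} := \min_i \min(c_i^+, c_{i+1}^-) > 0$. Writing $x_i^j = T^{-k_0}\beta_{i_0}$ and $x_{i+1}^j = T^{-k_1}\beta_{i_1}$ with $k_0, k_1 \in \{0,\dots,j-1\}$, the fact that $T^{k_0}$ and $T^{k_1}$ act as translations on $\Delta_i^j$ gives $T^{k_0}(x_i^j + t) = \beta_{i_0} + t$ and $T^{k_1}(x_i^j + t) = \beta_{i_1} - (d_i^j - t)$ for $t \in (0, d_i^j)$.

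The main step is to bound $f''^{(j)}(x) = \sum_{k=0}^{j-1} f''(T^k x)$ from below. Since $f''$ is a sum of non-negative terms of the form $c_{i'}^+/\{y-\beta_{i'}\}^2$ and $c_{i'+1}^-/\{\beta_{i'+1}-y\}^2$, retaining only the $k=k_0$ and $k=k_1$ summands and discarding the remaining $j-2$ non-negative contributions yields
\begin{equation*}
f''^{(j)}(x) \;\ge\; c_{\min}\left[\frac{1}{(x - x_i^j)^2} + \frac{1}{(x_{i+1}^j - x)^2}\right]
\end{equation*}
for every $x \in \Delta_i^j$. The hypothesis $j \ge 6c^2$, together with condition (i) of balanced partition at level $1$ (which gives $\min_{i'}(\beta_{i'+1}-\beta_{i'}) \ge 1/c$), ensures that $d_i^j \le c/j$ is much smaller than any gap between consecutive $\beta_{i'}$, so the above single-term bounds on $f''(\beta_{i_0}+t)$ and $f''(\beta_{i_1}-(d_i^j-t))$ are legitimate. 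Integrating this over $(a,b)$ and using $f'^{(j)}(b) - f'^{(j)}(a) = 2\delta j$ then gives
\begin{equation*}
2\delta j \;\ge\; c_{\min}\left[\frac{1}{A} + \frac{1}{B} - \frac{1}{d_i^j - A} - \frac{1}{d_i^j - B}\right].
\end{equation*}

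Finally, for fixed $s = A+B$, a direct symmetry/convexity argument shows that the bracket is minimized at $A = B = s/2$ with value $4/s - 2/(d_i^j - s/2)$, which is strictly decreasing in $s$ on $(0, d_i^j)$. Assuming $A + B \le \eta d_i^j$ for contradiction and using $d_i^j \le c/j$, one obtains $\delta \ge 4c_{\min}(1-\eta)/[c\,\eta(2-\eta)]$. Choosing any $\delta$ strictly smaller than this threshold (which depends only on $\eta$, $c$, and $c_{\min}$) produces a contradiction, completing the proof. The one subtlety is the lower bound on $f''^{(j)}$: one might worry that intermediate iterates $k \ne k_0, k_1$ with $T^k x$ close to some discontinuity could contribute very large positive amounts, but since we only want a lower bound and $f''$ is pointwise non-negative, these potentially large terms are harmless and may simply be dropped.
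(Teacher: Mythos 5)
Your argument is correct and takes a genuinely different route from the paper's proof. The paper works directly with $f'^{(j)}$: it fixes a point $x_0$ where $f'^{(j)}(x_0)=0$, and for a nearby $\overline{x}$ it bounds $f'^{(j)}(\overline{x})-f'^{(j)}(x_0)=\sum_k\bigl(f'(T^k\overline{x})-f'(T^kx_0)\bigr)$ from below by a rather delicate distributional analysis. It tracks how all $j$ translates $T^k[x_0,\overline{x}]$ lie inside the intervals $(\beta_i,\beta_{i+1})$, bounds the gaps between consecutive translates using the balanced partition length condition, and uses convexity/concavity of $f'$ around the inflection point of each $(\beta_i,\beta_{i+1})$ to control the contribution of the gaps. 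Your approach instead passes to $f''^{(j)}=\sum_{k}f''\circ T^k$, exploits that each summand is pointwise nonnegative, and simply discards all but the two summands $k_0,k_1$ that touch the singularities at the two endpoints of $\Delta_i^j$; integrating the resulting two-term lower bound over the sublevel set $(a,b)$ and using $d_i^j\leq c/j$ closes the argument. This is noticeably cleaner, sidesteps the gap-counting entirely (and in fact does not even use the hypothesis $j\geq 6c^2$), and the balanced partition lengths enter only through the trivial upper bound $d_i^j\leq c/j$ rather than through the full two-sided bound exploited in the paper.

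The one place where you are glossing over a real step is the phrase ``a direct symmetry/convexity argument shows that the bracket is minimized at $A=B=s/2$.'' The function $\psi(t)=\frac{1}{t}-\frac{1}{d-t}$ is convex only on $(0,d/2)$ and concave on $(d/2,d)$, so for $s=\eta d$ with $\eta>1/2$ one cannot directly invoke Jensen on $\psi(A)+\psi(B)$. The claim is nevertheless true: writing $A=\frac{s}{2}-\tau$, $B=\frac{s}{2}+\tau$, one has
\begin{equation*}
\psi(A)+\psi(B)=\frac{s}{(s/2)^2-\tau^2}-\frac{2d-s}{(d-s/2)^2-\tau^2},
\end{equation*}
and a short computation (using $s<d$, which forces $(s/2)^2<(d-s/2)^2$) shows this is increasing in $\tau^2$, so the infimum over $A+B=s$ is at $\tau=0$. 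Since $\psi$ is strictly decreasing, the infimum over $A+B\leq \eta d$ is then attained at $A=B=\eta d/2$, which is what your estimate needs. Without this refinement, the cruder bound $\frac{1}{A}+\frac{1}{B}\geq\frac{4}{\eta d}$ together with $\frac{1}{d-A}+\frac{1}{d-B}\leq\frac{2}{(1-\eta)d}$ only yields a positive threshold for $\delta$ when $\eta<2/3$, which would not suffice for the applications in Lemma~\ref{lm4} and the proof of Theorem~\ref{tw12}, where $\eta$ arbitrarily close to $1$ is required. So the extra calculation should be spelled out; once it is, your proof is complete and self-contained.
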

Speaking less formally, we claim that for $j$ large enough on any positive proportion of the interval $\Delta^j_i$ the absolute value of the derivative of $f^{(j)}$, i.e. $\left|f'^{(j)}\right|$, is larger than $\delta j$ for some $\delta>0$.
\begin{proof}
Take $0<\eta<1$. Recall that $T$ has balanced partition lengths and therefore
\begin{equation}\label{bpl}
\frac{1}{cj}\leq d^j_i \leq \frac{c}{j}
\end{equation}
for every $j\geq 1$ and $0\leq i\leq (r-1)j$. Let $M>\max\{\frac{2}{1-\eta},c^2\}$. Put $\delta=\frac{\sum_{i=0}^{r-1}(c^{+}_{i}+c^{-}_{i+1})}{4Mc^3}$. Fix $j\geq 6c^2$ and $0\leq i\leq (r-1)j$. Choose $x_0\in\Delta_i^j$ satisfying $f'^{(j)}(x_0)=0$. We claim that
\begin{equation}\label{e1}
\left|f'^{(j)}(x)\right|>\delta j\mbox{ for }x \in \Delta^j_i\mbox{ such that }\left|x-x_0\right|\geq \frac{d^j_i}{M}.
\end{equation}
Without loss of generality, we will conduct the proof only for $x>x_0$. Since $f'^{(j)}$ is increasing on $\Delta_i^j$, it is enough to show that $f'^{(j)}\left(x_0+\frac{d_i^j}{M}\right)>\delta j$, provided that $x_0+\frac{d_i^j}{M}\in \Delta_i^j$. Let $\overline{x}=x_0+\frac{d_i^j}{M}$. If $\overline{x}\notin\Delta_i^j$ then~\eqref{e1} is trivial. Suppose that $\overline{x}\in\Delta_i^j$. 

We will estimate now $f'^{(j)}(\overline{x})=f'^{(j)}(\overline{x})-f'^{(j)}(x_0)$ from below. Let $0\leq k_0\leq j-1$ be such that $T^{-k_0}\beta_{i_0}$ is the left end of the interval $\Delta_i^j$. Since 
\begin{equation*}
T^{-k}\beta_i \notin [T^{-k_0}\beta_{i_0},\overline{x}] \text{ for }0\leq k\leq j-1\text{ and }0\leq i\leq r-1,
\end{equation*}
it follows that 
\begin{equation}\label{wszystkie}
\beta_i \notin T^k([T^{-k_0}\beta_{i_0},\overline{x}])\text{ for }0\leq k\leq j-1\text{ and }0\leq i\leq r-1.
\end{equation}
Thus $T^k$ is a translation on $[T^{-k_0}\beta_{i_0},\overline{x}]$ for $0\leq k\leq j-1$, i.e.
\begin{equation}\label{takiesame}
T^kx=x+c_k \text{ for } x\in [T^{-k_0}\beta_{i_0},\overline{x}]
\end{equation}
for some constants $c_k$ ($0\leq k\leq j-1$). Therefore the lengths of the intervals of the three partitions by the sets of points $\{T^{k-k_0}\beta_{i_0}\colon 0\leq k\leq j-1\}$, $\{T^kx_0\colon 0\leq k\leq j-1\}$ and $\{T^k\overline{x}\colon 0\leq k\leq j-1\}$ are the same, except for the leftmost and rightmost intervals. The length of the leftmost and rightmost intervals of the two latter partitions can be estimated from above by $2\max \mathcal{P}(\{T^{k-k_0}\beta_{i_0}\colon 0\leq k\leq j-1\})$. Hence, in view of the inequalities~\eqref{bpl}, if $T^{k_1}x_0<T^{k_2}x_0$ then
\begin{align*}
T^{k_2}x_0-T^{k_1}\overline{x}=&(T^{k_2}x_0-T^{k_1}x_0)-(T^{k_1}\overline{x}-T^{k_1}x_0) \geq \frac{1}{cj}-\frac{d^j_i}{M}>\frac{1}{cj}-\frac{d^j_i}{c^2}
\\
=&\frac{1}{c^2}\left(\frac{c}{j}-d^j_i\right)\geq 0,
\end{align*}
as we have chosen $M>c^2$. This means that the interval $[x_0,\overline{x}]$ and its $j-1$ consecutive iterations by $T$ are pairwise disjoint.

Let $\overline{f}\colon[0,1)\to\mathbb{R}$ be given by
\begin{equation*}
\overline{f}(x)=\sum_{i=0}^{r-1}\chi_{[\beta_i,\beta_{i+1})}\left(-c^{+}_{i}\log\{x-\beta_{i}\}-c^{-}_{i+1}\log\{\beta_{i+1}-x\}\right).
\end{equation*}
\begin{figure}[ht]
\centering
\includegraphics[height=300pt]{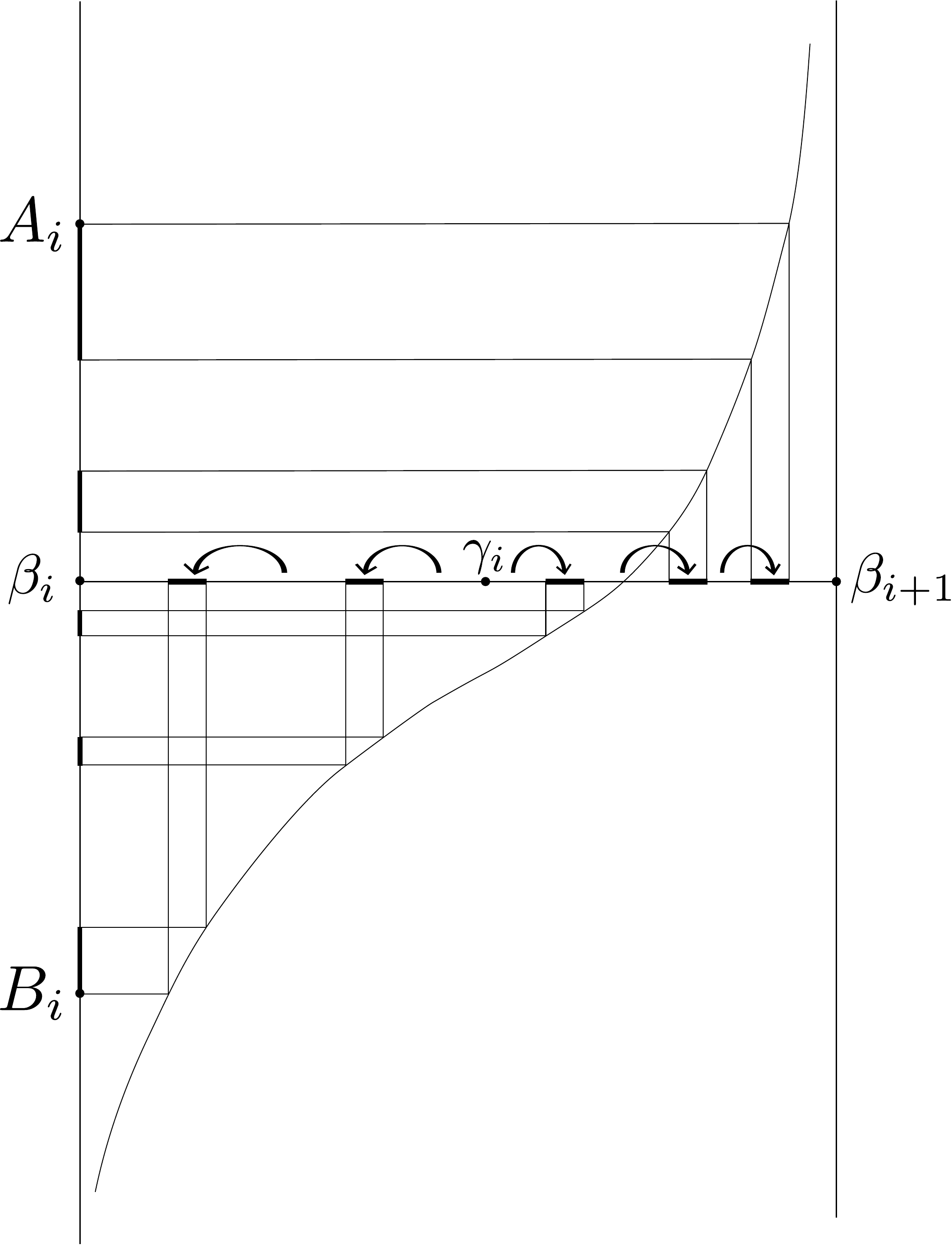}
\caption{The iterations of $[x_0,\overline{x}]$ and the gaps in $(\beta_i,\beta_{i+1})$}
\label{rys1}
\end{figure}

For $0\leq i\leq r-1$ put
\begin{align*}
A_i=&\max\left\{f'(T^k\overline{x})\colon 0\leq k \leq j-1\mbox{ such that }T^k\overline{x}\in[\beta_i,\beta_{i+1})\right\},
\\
B_i=&\min\left\{f'(T^kx_0)\colon 0\leq k \leq j-1\mbox{ such that }T^kx_0\in[\beta_i,\beta_{i+1})\right\},
\end{align*}
i.e. $A_i$ is the image via $f'$ of the right end of the rightmost interval among $T^k[x_0,\overline{x}]$ for $0\leq k\leq j-1$ such that $T^k[x_0,\overline{x}]\subset [\beta_i,\beta_{i+1})$ and $B_i$ is the image via $f'$ of the left end of the rightmost interval among $T^k[x_0,\overline{x}]$ for $0\leq k\leq j-1$ such that $T^k[x_0,\overline{x}]\subset [\beta_i,\beta_{i+1})$  (see Fig.~\ref{rys1}). Moreover, for $0\leq i\leq r-1$ put
\begin{align*}
\overline{A}_i=&\max\left\{\overline{f}'(T^k\overline{x})\colon 0\leq k \leq j-1\mbox{ such that }T^k\overline{x}\in[\beta_i,\beta_{i+1})\right\},
\\
\overline{B}_i=&\min\left\{\overline{f}'(T^kx_0)\colon 0\leq k \leq j-1\mbox{ such that }T^kx_0\in[\beta_i,\beta_{i+1})\right\}.
\end{align*}
Fix $0\leq i\leq r-1$. We claim that
\begin{equation}\label{posredni}
\overline{B}_i\leq \overline{f}'\left(\beta_i+\frac{2c}{j}\right).
\end{equation}
\begin{figure}[ht]
\centering
\includegraphics[height=110pt]{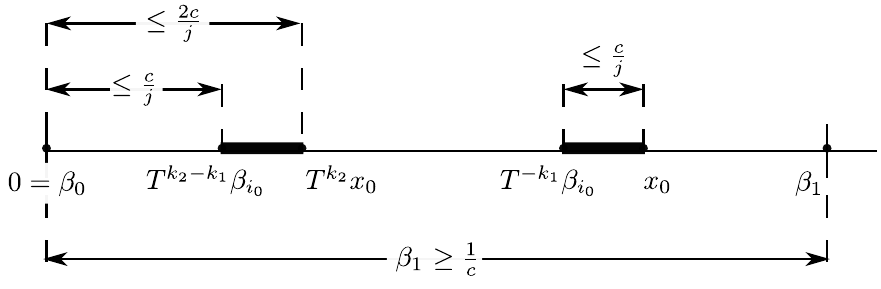}
\caption{$i=0$}
\label{izero}
\end{figure}
Indeed, since $\overline{f}'$ is increasing, it suffices to show that there exists $0\leq k\leq j-1$ such that $T^kx_0\in [\beta_i,\beta_{i+1})$ and $T^kx_0\leq \beta_i+\frac{2c}{j}$. Consider first the case when $i=0$ (see Fig.~\ref{izero}). Let $0\leq k_1 \leq j-1$ and $i_0$ be such that
\begin{equation*}
T^{-k_1}\beta_{i_0}=\max \{T^{-k}\beta_i\colon T^{-k}\beta_i<x_0,\ 0\leq k\leq j-1,\ 0\leq i\leq r-1 \}.
\end{equation*}
\begin{figure}[ht]
\centering
\includegraphics[width=300pt]{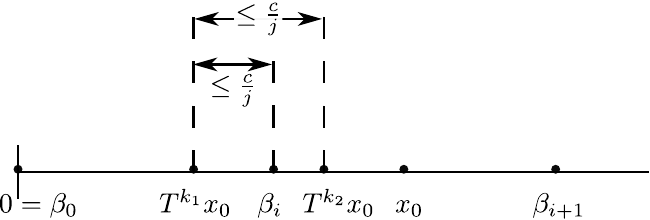}
\caption{$i>0$}
\label{iplus}
\end{figure}
Since $T$ has balanced partition lengths, $x_0-T^{-k_1}\beta_{i_0}\leq \frac{c}{j}$. For the same reason, there exists $0\leq k_2\leq j-1$ such that $T^{k_2-k_1}\beta_{i_0}\leq \frac{c}{j}$. Hence $T^{k_2}x_0\leq \frac{2c}{j}$. Moreover, we have $j\geq 6c^2 >2c^2$, so $\frac{1}{c}>\frac{2c}{j}$ and therefore $T^{k_2}x_0\in [0,\beta_1)$. Consider now the case when $i>0$ (see Fig.~\ref{iplus}). Let $0\leq k_1\leq j-1$ be such that
\begin{equation*}
T^{k_1}x_0=\max\{T^{k}x_0<\beta_i\colon 0\leq k\leq j-1\}
\end{equation*}
and let $0\leq k_2\leq j-1$ be such that
\begin{equation*}
T^{k_2}x_0=\min\{T^{k}x_0>\beta_i\colon 0\leq k\leq j-1\}.
\end{equation*}
Then
\begin{equation*}
T^{k_2}x_0-\beta_i \leq T^{k_2}x_0-T^{k_1}x_0\leq \frac{c}{j}<\frac{2c}{j},
\end{equation*}
where the middle inequality follows from the remarks after~\eqref{takiesame}. The inequality~\eqref{posredni} is therefore proved. Hence
\begin{align}\label{yksi}
\overline{B}_i&\leq \overline{f}'\left(\beta_{i}+\frac{2c}{j}\right)=-\frac{c_i^+}{\beta_{i}+\frac{2c}{j}-\beta_{i}}+\frac{c_{i+1}^-}{\beta_{i+1}-\left(\beta_{i}+\frac{2c}{j}\right)} \nonumber
\\
&=-\frac{c_{i}^+}{\frac{2c}{j}}+\frac{c_{i+1}^-}{\beta_{i+1}-\beta_i-\frac{2c}{j}}.
\end{align}
In a similar way we obtain
\begin{align}\label{kaksi}
\overline{A}_i&\geq \overline{f}'\left(\beta_{i+1}-\frac{2c}{j}\right)=-\frac{c_i^+}{\beta_{i+1}-\frac{2c}{j}-\beta_{i}}+\frac{c_{i+1}^-}{\beta_{i+1}-\left(\beta_{i+1}-\frac{2c}{j}\right)} \nonumber
\\
&=-\frac{c_i^+}{\beta_{i+1}-\beta_i-\frac{2c}{j}}+\frac{c_{i+1}^-}{\frac{2c}{j}}.
\end{align}
Recall that $\beta_{i+1}-\beta_i\geq \frac{1}{c}$ (this follows from~\eqref{bpl} for $j=1$). Since $j\geq 6c^2$ implies $\frac{1}{c}-\frac{2c}{j}>0$, we have
\begin{equation*}
\overline{A}_i\geq -\frac{c_i^+}{\frac{1}{c}-\frac{2c}{j}}+\frac{c_{i+1}^-}{\frac{2c}{j}}\text{ and }
\overline{B}_i\leq -\frac{c_i^+}{\frac{2c}{j}}+\frac{c_{i+1}^-}{\frac{1}{c}-\frac{2c}{j}}.
\end{equation*}
Therefore
\begin{multline}\label{kolme}
\overline{A}_i-\overline{B}_i\geq -\frac{c_i^+}{\frac{1}{c}-\frac{2c}{j}}+\frac{c_{i+1}^-}{\frac{2c}{j}}+\frac{c_i^+}{\frac{2c}{j}}-\frac{c_{i+1}^-}{\frac{1}{c}-\frac{2c}{j}}
\\
=(c_i^{+}+c_{i+1}^{-})\left( \frac{j}{2c}-\frac{1}{\frac{1}{c}-\frac{2c}{j}}\right)\geq\frac{c_i^{+}+c_{i+1}^-}{4}\frac{j}{c},
\end{multline}
where the last inequality follows from the assumption that $j\geq 6c^2$. Note that $f'-\overline{f}'$ is increasing on $(\beta_i,\beta_{i+1})$ and therefore
\begin{equation*}
(A_i-\overline{A}_i)-(B_i-\overline{B}_i)>0.
\end{equation*}
Indeed, since $f'$ and $\overline{f}'$ are both increasing on $(\beta_i,\beta_{i+1})$, the maximal value among $f'(T^k\overline{x})$ ($0\leq k\leq j-1$) and the maximal value among $\overline{f}'(T^k\overline{x})$ ($0\leq k\leq j-1$) are obtained for the same argument $T^k\overline{x}$. The same applies to the minima in the definition of $B_i$ and $\overline{B}_i$. Hence
\begin{equation*}
A_i-B_i>\overline{A}_i-\overline{B}_i\geq\frac{c_i^{+}+c_{i+1}^-}{4}\frac{j}{c}.
\end{equation*}
Adding the inequalities for $0\leq i\leq r-1$, we conclude that
\begin{equation}\label{dwa}
\sum_{i=0}^{r-1}(A_i-B_i)>\frac{1}{4}\sum_{i=0}^{r-1}(c^{+}_i+c^{-}_{i+1})\frac{j}{c}.
\end{equation}
If $0\leq k_1,k_2\leq j-1$ satisfy $\beta_i<T^{k_1}x_0<T^{k_2}x_0<\beta_{i+1}$ and for $0\leq k\leq j-1$ we have $T^kx_0\notin(T^{k_1}x_0,T^{k_2}x_0)$, we say that $(T^{k_1}\overline{x},T^{k_2}x_0)$ is a \emph{gap} in $(\beta_i,\beta_{i+1})$. Since $T$ has balanced partition lengths, in view of~\eqref{takiesame} and~\eqref{bpl} we obtain an upper bound for the lengths of the gaps:
\begin{equation}\label{cztery}
T^{k_2}x_0-T^{k_1}\overline{x} \leq \frac{c}{j}-\frac{d^j_i}{M}=\frac{1}{Mcj}Mc^2-\frac{d^j_i}{M}\leq \frac{d^j_i}{M}(Mc^2-1)
\end{equation}
for any gap $(T^{k_1}\overline{x},T^{k_2}x_0)$.

Fix again $0\leq i\leq r-1$. From Lemma~\ref{pochodna} it follows that the function $f'^{(j)}$ has one inflection point in the interval $(\beta_i,\beta_{i+1})$. Denote it by $\gamma_i$. To each gap in $(\beta_i,\beta_{i+1})$ assign one of the iterations $T^{k}[x_0,\overline{x}]$ in the following way (see Fig.~\ref{rys1}). Consider the gap $(T^{k_1}\overline{x},T^{k_2}x_0)$. There are three cases:
\begin{itemize}
\item
if $T^{k_1}x_0\geq \gamma_i$, we assign $[T^{k_2}x_0,T^{k_2}\overline{x}]$ to the gap $(T^{k_1}\overline{x},T^{k_2}x_0)$,
\item
if $T^{k_1}\overline{x}\leq \gamma_i$, we assign $[T^{k_1}x_0,T^{k_1}\overline{x}]$ to the gap $(T^{k_1}\overline{x},T^{k_2}x_0)$,
\item
if $\gamma_i\in(T^{k_1}\overline{x},T^{k_2}x_0)$ then we split the gap and assign $[T^{k_1}x_0,T^{k_1}\overline{x}]$ to $(T^{k_1}\overline{x},\gamma_i)$ and $[T^{k_2}x_0,T^{k_2}\overline{x}]$ to $(\gamma_i,T^{k_2}x_0)$.
\end{itemize}
If $\gamma_i\in[T^{k}x_0,T^{k}\overline{x}]$ for some $0\leq k\leq j-1$ then $[T^{k}x_0,T^{k}\overline{x}]$ is not assigned to any gap.

From~\eqref{cztery} it follows that the ratio of the length of each gap to the length of the interval which is assigned to it can be estimated from above by $c^2M-1$:
\begin{itemize}
\item
for $(T^{k_1}\overline{x},T^{k_2}x_0)$ with $T^{k_1}x_0\geq \gamma_i$ we have $T^{k_2}x_0-T^{k_1}\overline{x}\leq\frac{d^j_i}{M}(Mc^2-1)=(T^{k_2}x_0-T^{k_2}\overline{x})(c^2M-1)$,
\item
for $(T^{k_1}\overline{x},T^{k_2}x_0)$ with $T^{k_2}\overline{x}\leq \gamma_i$ we have
$T^{k_2}x_0-T^{k_1}\overline{x}\leq\frac{d^j_i}{M}(Mc^2-1)=(T^{k_1}x_0-T^{k_1}\overline{x})(c^2M-1)$,
\item
for $(T^{k_1}\overline{x},T^{k_2}x_0)$ with $\gamma_i\in(T^{k_1}\overline{x},T^{k_2}x_0)$ we have
$\gamma_i-T^{k_1}\overline{x}\leq \frac{d^j_i}{M}(Mc^2-1)=(T^{k_1}\overline{x}-T^{k_1}x_0)(c^2M-1)$ and $T^{k_2}x_0-\gamma_i\leq \frac{d^j_i}{M}(Mc^2-1)=(T^{k_2}\overline{x}-T^{k_2}x_0)(c^2M-1)$.
\end{itemize}

Let $a_k=f'(T^k\overline{x})-f'(T^kx_0)$ ($0\leq k\leq j-1$). Since $f'^{(j)}$ is concave on $(\beta_i,\gamma_i)$ and convex on $(\gamma_i,\beta_{i+1})$, the length of the image by $f'$ of each gap can be estimated from above by $a_k(c^2M-1)$ with $k$ chosen according to the assignment described previously, i.e.
\begin{itemize}
\item
for $(T^{k_1}\overline{x},T^{k_2}x_0)$ with $T^{k_1}x_0\geq \gamma_i$ we have $f'(T^{k_2}x_0)-f'(T^{k_1}\overline{x})\leq a_{k_2}(c^2M-1)$,
\item
for $(T^{k_1}\overline{x},T^{k_2}x_0)$ with $T^{k_2}\overline{x}\leq \gamma_i$ we have $f'(T^{k_2}x_0)-f'(T^{k_1}\overline{x})\leq a_{k_1}(c^2M-1)$,
\item
for $(T^{k_1}\overline{x},T^{k_2}x_0)$ with $\gamma_i\in(T^{k_1}\overline{x},T^{k_2}x_0)$ we have $f'(T^{k_2}x_0)-f'(\gamma_i)\leq a_{k_2}(c^2M-1)$ and $f'(\gamma_i)-f'(T^{k_1}\overline{x})\leq a_{k_1}(c^2M-1)$.
\end{itemize}
This means that the sum of the lengths of the intervals $[A_i,B_i]$, which is the sum of the images of the intervals $[T^kx_0,T^k\overline{x}]\in (\beta_i,\beta_{i+1})$ for $0\leq k\leq j-1$ and the images of the gaps between them in each $(\beta_i,\beta_{i+1})$, can be estimated from above as follows:
\begin{equation}\label{piec}
\sum_{i=0}^{r-1} (A_i-B_i)\leq\sum_{k=0}^{j-1}a_k + \sum_{k=0}^{j-1}a_k(c^2M-1)=\sum_{k=0}^{j-1}a_kc^2M.
\end{equation}

From~\eqref{wszystkie} it follows that $\beta_i\notin [T^{k}x_0,T^k\overline{x}]$ for $0\leq k\leq j-1$, so by~\eqref{dwa} and~\eqref{piec} we obtain
\begin{align*}
f'^{(j)}(\overline{x})&=f'^{(j)}(\overline{x})-f'^{(j)}(x_0)=\sum_{k=0}^{j-1}\left(f'(T^k\overline{x})-f'(T^kx_0)\right)=\sum_{k=0}^{j-1}a_k 
\\
&\geq \frac{\sum_{i=0}^{r-1}\left(A_i-B_i\right)}{c^2M}>\frac{\frac{1}{4}\sum_{i=0}^{r-1}\left(c^{+}_{i}+c^{-}_{i+1}\right)\frac{j}{c}}{c^2M}
=\frac{\sum_{i=0}^{r-1}(c_{i}^{+}+c^{-}_{i+1})}{4Mc^3}j=\delta j.
\end{align*}
and condition~\eqref{e1} indeed holds. Since $M>\frac{2}{1-\eta}$, this means that 
\begin{equation*}
m\left(\left\{x \in \Delta^j_i \colon |f'^{(j)}(x)|>\delta j \right\}\right)>\left(1-\frac{2}{M}\right)d^j_i>\eta d^j_i
\end{equation*}
and the proof is complete.
\end{proof}
\begin{uw}\label{uw:4.2}
We claim that the assertion of the above lemma remains true if we replace $f$ with $f+g_2$ where $g_2$ is linear. Indeed, notice that throughout the proof we have mostly used properties of $f$ which are not affected by adding a linear function to $f$  such as (piecewise) monotonicity or convexity. The only places where we needed an explicit formula for the considered function were~\eqref{yksi} and~\eqref{kaksi}. The estimates of $\overline{A}_i$ and $\overline{B}_i$ are clearly different for $f+g_2$ in place of $f$. However, what is used in the remainder of the proof is~\eqref{kolme} which stays unchanged: to adjust the proof for $f+g_2$ we need to add the same value to the ``new'' $\overline{A}_i$ and $\overline{B}_i$ which cancels out in~\eqref{kolme}.
\end{uw}
\begin{lm}\label{lm4}
Let $\mathcal{H}=\{h_{\alpha}\colon \alpha\in\mathcal{A}\}$ be a family of monotonic, differentiable, convex functions $h_{\alpha}\colon(a_{\alpha},b_{\alpha})\to\mathbb{R}$. Suppose that
\begin{multline}\label{six}
\left(\forall\ {0<\eta<1}\right)\ \left(\exists\ {\widetilde{\delta}>0}\right)\ \left(\forall\ {\alpha\in\mathcal{A}}\right)\\
 \ m\left\{x \in (a_{\alpha},b_{\alpha}) \colon |h_{\alpha}'(x)|>\frac{\widetilde{\delta}}{b_{\alpha}-a_{\alpha}}\right\}>\eta (b_{\alpha}-a_{\alpha}).
\end{multline}
Then
\begin{multline*}
\left( \forall\ {0<\widetilde{\eta}<1}\right)\ \left(\exists\ {\varepsilon>0}\right)\ \left(\forall\ {t>0}\right)\ \left(\forall\ {\alpha\in\mathcal{A}}\right)\\
\ m\left\{x \in (a_{\alpha},b_{\alpha}) \colon |h_{\alpha}(x)-t|<2\varepsilon\right\} \leq \widetilde{\eta}(b_{\alpha}-a_{\alpha}).
\end{multline*}
\end{lm}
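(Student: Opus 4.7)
The plan is to split $(a_\alpha,b_\alpha)$ into a \emph{good set} $G_\alpha$ on which $|h'_\alpha|$ is bounded below and its complement, which is automatically small by~\eqref{six}. On $G_\alpha$, the fact that $|h'_\alpha|$ is large forces the preimage of any short interval $(t-2\varepsilon,t+2\varepsilon)$ to have short measure, via the mean value theorem. The role of the combined hypothesis ``monotonic and convex'' is to ensure that $|h'_\alpha|$ is itself monotone, so $G_\alpha$ is a single subinterval of $(a_\alpha,b_\alpha)$ and the mean value theorem applies cleanly in one step rather than component by component.

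Given $0<\widetilde{\eta}<1$, first set $\eta := 1-\widetilde{\eta}/2$ and apply the hypothesis with this $\eta$ to obtain $\widetilde{\delta}>0$ such that, writing
\[
G_\alpha = \left\{ x\in(a_\alpha,b_\alpha) : |h'_\alpha(x)| > \frac{\widetilde{\delta}}{b_\alpha-a_\alpha}\right\},
\]
we have $m(G_\alpha) > \eta(b_\alpha-a_\alpha)$, and hence $m((a_\alpha,b_\alpha)\setminus G_\alpha) < \frac{\widetilde{\eta}}{2}(b_\alpha-a_\alpha)$, uniformly in $\alpha\in\mathcal{A}$. Convexity of $h_\alpha$ makes $h'_\alpha$ nondecreasing, and monotonicity of $h_\alpha$ keeps the sign of $h'_\alpha$ constant, so $|h'_\alpha|$ is monotone on $(a_\alpha,b_\alpha)$ and $G_\alpha$ is an interval abutting either $a_\alpha$ or $b_\alpha$.

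On $G_\alpha$ the function $h_\alpha$ is strictly monotone with $|h'_\alpha|>\widetilde{\delta}/(b_\alpha-a_\alpha)$, so for any $x_1<x_2$ both belonging to $G_\alpha\cap\{|h_\alpha-t|<2\varepsilon\}$ the mean value theorem gives
\[
4\varepsilon > |h_\alpha(x_2)-h_\alpha(x_1)| \geq \frac{\widetilde{\delta}}{b_\alpha-a_\alpha}\,(x_2-x_1),
\]
whence $m(G_\alpha\cap\{|h_\alpha-t|<2\varepsilon\}) \leq 4\varepsilon(b_\alpha-a_\alpha)/\widetilde{\delta}$. Choosing $\varepsilon := \widetilde{\eta}\widetilde{\delta}/8$, independently of $t$ and $\alpha$, this bound becomes at most $\frac{\widetilde{\eta}}{2}(b_\alpha-a_\alpha)$; adding the estimate on the complement of $G_\alpha$ yields the desired inequality. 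No genuine obstacle arises: the content is purely the geometric reduction, enabled by convexity together with monotonicity, that $G_\alpha$ is connected, after which everything reduces to a single mean value estimate and a clean choice of constants.
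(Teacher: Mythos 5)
Your proof is correct and follows essentially the same route as the paper's: split $(a_\alpha,b_\alpha)$ into the set where $|h'_\alpha|>\widetilde{\delta}/(b_\alpha-a_\alpha)$ (small complement by hypothesis~\eqref{six}) and bound the intersection of that set with $\{|h_\alpha-t|<2\varepsilon\}$ by a single mean value estimate, using monotonicity and convexity to guarantee the relevant set is an interval so the intermediate point $\xi$ inherits the derivative lower bound. The only differences are cosmetic choices of constants ($\varepsilon=\widetilde{\eta}\widetilde{\delta}/8$ versus the paper's $\varepsilon<\widetilde{\delta}\eta/(2L)$).
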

\begin{proof}
Fix $0<\widetilde{\eta}<1$. Take $1-\widetilde{\eta}<\eta<1$ and $L>0$ such that $\frac{2\eta}{L}+1-\eta\leq\widetilde{\eta}$. Take $\varepsilon<\widetilde{\delta}\frac{\eta}{2L}$, where $\widetilde{\delta}$ is as in the condition~\eqref{six}. Fix $\alpha\in\mathcal{A}$. Let $t>0$ and
\begin{align*}
A_{\alpha}&=\{x\in(a_{\alpha},b_{\alpha})\colon |h_{\alpha}(x)-t|<2\varepsilon\},
\\
B_{\alpha}&=\left\{x\in(a_{\alpha},b_{\alpha})\colon \left| h_{\alpha}'(x) \right| >\frac{\widetilde{\delta}}{b_{\alpha}-a_{\alpha}}\right\}.
\end{align*}
By assumption $m(A_{\alpha}\cap B_{\alpha}^c)\leq (1-\eta)(b_{\alpha}-a_{\alpha})$. Since function $h_{\alpha}$ is convex and monotone, $B_\alpha$ and $A_\alpha$ are intervals, whence also $A_\alpha\cap B_\alpha$ is an interval. Put
\begin{equation*}
x_{1,\alpha}=\inf(A_{\alpha}\cap B_{\alpha}),\ x_{2,\alpha}=\sup(A_{\alpha}\cap B_{\alpha}).
\end{equation*}
From the mean value theorem
\begin{equation*}
|h_{\alpha}(x_{1,\alpha})-h(x_{2,\alpha})|=|h_{\alpha}'(\xi)(x_{2,\alpha}-x_{1,\alpha})|\text{ for some }\xi\in(x_{1,\alpha},x_{2,\alpha}).
\end{equation*}
Hence $\xi \in B_\alpha$ and we obtain
\begin{equation*}
4\varepsilon\geq|h_{\alpha}(x_{2,\alpha})-h(x_{1,\alpha})|=|h_{\alpha}'(\xi)(x_{2,\alpha}-x_{1,\alpha})|>\frac{\widetilde{\delta}}{b_{\alpha}-a_{\alpha}}|x_{2,\alpha}-x_{1,\alpha}|,
\end{equation*}
which implies
\begin{equation*}
|x_{2,\alpha}-x_{1,\alpha}|<\frac{4\varepsilon(b_{\alpha}-a_{\alpha})}{\widetilde{\delta}}<\frac{2\eta(b_{\alpha}-a_{\alpha})}{L}.
\end{equation*}
It follows that
\begin{align*}
m(A_{\alpha})&=m(A_{\alpha}\cap B_{\alpha}^c)+m(A_{\alpha}\cap B_{\alpha})\leq (1-\eta)(b_{\alpha}-a_{\alpha})+\frac{2\eta(b_{\alpha}-a_{\alpha})}{L}
\\
&=\left(1-\eta+\frac{2\eta}{L}\right)(b_{\alpha}-a_{\alpha})\leq\widetilde{\eta}(b_{\alpha}-a_{\alpha}),
\end{align*}
and the proof is complete.
\end{proof}


In the proof of the next lemma we use the same techniques as in~\cite{CFS82} (see Lemma 2, Ch. 16, $\mathcal{x}$3 for $C^1$-functions in the case of rotations) and in~\cite{FL06} (see Lemma 6.1 for absolutely continuous functions in the case of rotations). One of the properties which we will use in the proof is unique ergodicity of the considered interval exchange transformations. In order to show that the IETs we deal with are indeed uniquely ergodic, let us recall first some definitions introduced by M.~A.~Boshernitzan~\cite{Boshernitzan85}.

\begin{df}
Set $A\subset \mathbb{N}=\{1,2,3,\dots\}$ is said to be \emph{essential} if for any $l\geq 2$ there exists $a>1$ such that the system
\begin{equation*} 
\left\{
\begin{array}{ll}
n_{i+1}>2n_i & \text{for } 1\leq i\leq l-1\\
n_l<a\cdot n_1 & \\
n_i\in A & \text{for }1\leq i\leq l 
\end{array} \right.
\end{equation*}
has an infinite number of solutions $(n_1,n_2,\dots,n_l)$.
\end{df}
\begin{df}
We say that an IET $T$ has Property P if for some $\varepsilon>0$ the set $\left\{n\in\mathbb{N}\colon \min \mathcal{P}_n\geq \frac{\varepsilon}{n}\right\}$ is essential.
\end{df}
\begin{tw}\cite{Boshernitzan85}\label{boo}
Let $T$ be a minimal IET which satisfies Property P. Then $T$ is uniquely ergodic.
\end{tw}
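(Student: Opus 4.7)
I would follow the classical strategy for proving unique-ergodicity results for IETs via the Rauzy--Veech tower decomposition: assume there were two distinct ergodic $T$-invariant probability measures $\mu_1,\mu_2$, and derive a contradiction from Property~P. Since $T$ is minimal, both measures are non-atomic and charge every open interval, so the question is one of linear algebra at each step of the induction.

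The plan is first to encode invariance as a finite-dimensional object. At the $n$-th step of Rauzy--Veech induction the base intervals $I_j^{(n)}$ and heights $h_j^{(n)}$ give a Rokhlin tower decomposition of $[0,1)$, so any $T$-invariant probability $\mu$ is determined by the vector $a^{(n)}(\mu)=(\mu(I_j^{(n)}))_{j=1}^r$ subject to $\sum_j h_j^{(n)} a_j^{(n)}=1$. These vectors transform under $\mathcal R$ by a cocycle naturally conjugate to the Rauzy cocycle $A^{(n)}$ introduced in Section~\ref{se:2.5}, so the goal becomes to show that the projective classes of $a^{(n)}(\mu_1)$ and $a^{(n)}(\mu_2)$ coincide in the limit along some subsequence.

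Second, exploit Property~P to control densities uniformly. On the essential set of indices $n$ with $\min\mathcal{P}_n\geq\varepsilon/n$, the pigeonhole bound $\sum_{J\in\mathcal{P}_n}m(J)=1$ combined with $\#\mathcal{P}_n=(r-1)n+1$ forces $\max\mathcal{P}_n\leq C/n$ as well, so all intervals of $\mathcal{P}_n$ have Lebesgue length comparable to $1/n$. Together with the tower picture and the fact that $T^n$ is a translation on each $J\in\mathcal{P}_n$, a standard ratio comparison yields a constant $K=K(\varepsilon,r)$ with $\mu_i(J)/\mu_i(J')\leq K$ for all $J,J'\in\mathcal{P}_n$ and both $i=1,2$. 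In projective terms, the ratio vector $a^{(n)}(\mu_1)/a^{(n)}(\mu_2)$ lies in a Hilbert-metric ball of bounded diameter whenever $n$ belongs to the essential set.

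Finally, the full power of the essentiality condition---the existence, for every $l$, of nested sequences $n_1<\dots<n_l$ in the set with $n_{i+1}>2n_i$ and $n_l<a\,n_1$---lets one cascade the comparison across $l$ arbitrarily many scales. At each scale, the (strictly positive) Rauzy cocycle contracts the Hilbert projective metric by a definite factor, so iterating collapses the cone of admissible ratio vectors to a single ray; hence $a^{(n)}(\mu_1)$ and $a^{(n)}(\mu_2)$ become proportional and normalization forces $\mu_1=\mu_2$. The main obstacle, as I see it, is exactly this projective-contraction step: converting the single-scale density bound obtained from Property~P into a quantitative Hilbert-metric contraction, and controlling its rate in terms of the essential-set data (rather than by invoking a single bound $\min\mathcal{P}_n\gtrsim 1/n$ which on its own would not suffice to iterate).
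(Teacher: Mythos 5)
First, a point of comparison: the paper does not prove this theorem at all --- it is quoted as a black box from \cite{Boshernitzan85} (only Corollary~\ref{bosher} is derived from it). So your sketch has to be judged against Boshernitzan's own argument (or Veech's later simplification), not against anything in the text.

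Your outline contains one concretely false step, and the rest leans on it. You claim that on the good set ``the pigeonhole bound $\sum_{J\in\mathcal{P}_n}m(J)=1$ combined with $\#\mathcal{P}_n=(r-1)n+1$ forces $\max\mathcal{P}_n\leq C/n$.'' Pigeonhole only gives $\min\mathcal{P}_n\leq \frac{1}{(r-1)n+1}\leq\max\mathcal{P}_n$; a lower bound $\min\mathcal{P}_n\geq\varepsilon/n$ gives no upper bound of order $1/n$ on the maximum. Already for a circle rotation with a huge partial quotient $a_{k+1}=N$, taking $n\approx q_{k+1}/2$ the three-distance theorem gives $n\cdot\min\mathcal{P}_n\approx 1/2$ while $n\cdot\max\mathcal{P}_n\approx N/4$, which is unbounded. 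What you are asserting --- that all atoms of $\mathcal{P}_n$ have length comparable to $1/n$ --- is exactly condition (i) of Definition~\ref{bal}, i.e.\ balanced partition lengths; if it followed from Property~P by pigeonhole, the careful work of Section~\ref{balparle} would be largely redundant. The later steps inherit the gap: the ``standard ratio comparison'' $\mu_i(J)/\mu_i(J')\leq K$ needs the two-sided bound (compare the multiplicity count in the proof of Lemma~\ref{wn8}, which uses both $1/(cn)$ and $c/n$), and the claimed uniform Hilbert-metric contraction presupposes strict positivity of long blocks of the Rauzy cocycle, which the elementary matrices $I+E_{i,j}$ do not have and which must itself be extracted from the hypothesis --- you rightly flag this as the main obstacle, but it is not the only one. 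Finally, your argument never uses the quantitative content of essentiality (a chain $n_1<\dots<n_l$ with $n_{i+1}>2n_i$ confined to a window $n_l<a\,n_1$), which is precisely what Boshernitzan's combinatorial proof exploits; Veech's proof, closest in spirit to your Rauzy--Veech cone-contraction plan, dispenses with essentiality but replaces your pigeonhole step with a genuine argument that $\limsup_n n\min\mathcal{P}_n>0$ forces balanced induction matrices along a subsequence. As it stands the proposal is a plausible roadmap, but the one step you present as immediate is the one that fails.
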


\begin{wn}\label{bosher}
Any IET with balanced partition lengths is uniquely ergodic.
\end{wn}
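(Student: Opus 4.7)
My plan is to verify the two hypotheses of Boshernitzan's Theorem~\ref{boo}: that $T$ is minimal and that $T$ has Property P.

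Property P is immediate from condition (i) of Definition~\ref{bal}. Setting $\varepsilon := 1/c$, the inequality $\min \mathcal{P}_n \geq 1/(cn) = \varepsilon/n$ holds for \emph{every} $n \geq 1$, so the set $A := \{n \in \mathbb{N} : \min \mathcal{P}_n \geq \varepsilon/n\}$ equals $\mathbb{N}$. It therefore suffices to observe that $\mathbb{N}$ itself is essential. Given $l \geq 2$, take $a := 2^l$; for each starting value $n_1 \in \mathbb{N}$ the recursion $n_{i+1} = 2n_i + 1$ produces a tuple with $n_{i+1} > 2n_i$ automatically, and a direct induction gives $n_l = 2^{l-1}(n_1+1) - 1 < 2^l n_1 = a\cdot n_1$ for $n_1 \geq 1$. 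Varying $n_1$ over $\mathbb{N}$ yields infinitely many admissible tuples $(n_1,\dots,n_l)$, so $\mathbb{N}$ is essential and Property P holds.

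For minimality, I use condition (ii) to exclude any finite orbit among the discontinuities. If some $\beta_{i_0}$ were periodic of period $p$, then for every $j \geq 1$ the set $\{T^{-k+l}\beta_{i_0} : 0 \leq l \leq j-1\}$ would cycle through at most $p$ distinct values, forcing $\max \mathcal{P}(\{T^{-k+l}\beta_{i_0}\colon 0\leq l\leq j-1\}) \geq 1/p$ --- a contradiction with the upper bound $c/j$ as soon as $j > cp$. Hence every $\beta_i$ has infinite forward orbit; combined with the density of $\bigcup_{i,k}\{T^{-k}\beta_i\}$ (immediate from $\max \mathcal{P}_n \leq c/n \to 0$) and the irreducibility of the combinatorial data $(\pi_0,\pi_1)$, Keane's theorem yields minimality of $T$. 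An application of Theorem~\ref{boo} then gives unique ergodicity.

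The real content lies in Property P, but since condition (i) holds uniformly in $n$ the essentiality test reduces to a purely combinatorial recursion, and I anticipate no serious obstacle: both hypotheses of Theorem~\ref{boo} follow in a direct structural way from the two inequalities in Definition~\ref{bal}.
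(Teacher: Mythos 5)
Your overall strategy coincides with the paper's (which simply cites Theorem~\ref{boo}), and your verification of Property~P is complete and correct: condition (i) gives $\min\mathcal{P}_n\geq \varepsilon/n$ with $\varepsilon=1/c$ for \emph{every} $n$, so the relevant set is all of $\mathbb{N}$, and your recursion $n_{i+1}=2n_i+1$ with $a=2^l$ does exhibit infinitely many admissible tuples, so $\mathbb{N}$ is essential. This part is actually more careful than the paper's one-line proof.

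The gap is in the minimality step. Keane's theorem, as stated in Section~\ref{przek}, has the IDOC as its hypothesis, and the IDOC requires the orbits $\mathcal{O}(\beta_j)$, $1\leq j\leq r-1$, to be infinite \emph{and pairwise disjoint}. You establish infinitude (correctly, via condition (ii)), but you never establish disjointness, and disjointness does not follow from balanced partition lengths: a coincidence $T^{-k}\beta_i=T^{-k'}\beta_{i'}$ merely reduces the number of points generating $\mathcal{P}_n$ and is perfectly compatible with both inequalities of Definition~\ref{bal} (think of an irrational rotation presented as a $3$-IET with a spurious discontinuity). The auxiliary facts you append --- density of $\bigcup_{i,k}T^{-k}\beta_i$ and irreducibility of $(\pi_0,\pi_1)$ --- are not the hypotheses of the theorem you invoke, so as written the step does not go through. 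The conclusion is nevertheless reachable from Definition~\ref{bal} by a different use of condition (ii): taking $k=0$ there, the partition generated by the forward orbit $\{T^{l}\beta_i\colon 0\leq l\leq j-1\}$ of \emph{each single} discontinuity has mesh at most $c/j\to 0$, so each such forward orbit is dense in all of $[0,1)$. Since by Keane's structure theory the closure of any forward orbit of an IET is a finite union of intervals, it has nonempty interior, hence (by condition (i)) contains some $T^{-k}\beta_i$, hence contains $\beta_i$ and therefore the closure of its forward orbit, which is $[0,1)$. This yields minimality without ever needing disjointness of the orbits; you should replace the appeal to the IDOC by this argument.
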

\begin{proof}
The claim follows directly by Theorem~\ref{boo} and by the definition of balanced partition lenghts.
\end{proof}

\begin{lm}\label{wn8}
Let $T \colon [0,1) \to [0,1)$ be an IET of $r$ intervals with balanced partition lengths with constant $c>0$ and let $g \colon [0,1) \to \mathbb{R}$ be an absolutely continuous function such that $\int_0^1g'(x)dx=0$. Then for any $\varepsilon>0$ there exists $N_0>0$ such that for $n> N_0$, all $0\leq i\leq (n-1)r$ and $x,y\in \Delta_i^n$ the inequality $|g^{(n)}(x)-g^{(n)}(y)|<\varepsilon$ holds.
\end{lm}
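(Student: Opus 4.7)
The strategy follows Lemma~6.1 of~\cite{FL06}. Split $g'=h+\psi$, where $h$ is continuous with zero mean and $\psi$ is $L^1$-small; handle the $h$-part via unique ergodicity of $T$ and the $\psi$-part via a multiplicity bound provided by balance. The starting observation is that each $\Delta_i^n$ is a cell of $\mathcal{P}_n$, so every $T^k$ with $0\le k\le n-1$ acts as an isometric translation on $\Delta_i^n$. Hence $g^{(n)}$ is absolutely continuous on $\Delta_i^n$ with a.e.\ derivative $\sum_{k=0}^{n-1}g'\circ T^k$, and
\[
g^{(n)}(x)-g^{(n)}(y)=\int_y^x\sum_{k=0}^{n-1}g'(T^k t)\,dt,
\]
while clause (i) of Definition~\ref{bal} gives $|x-y|\le c/n$.

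Fix $\varepsilon>0$. By $L^1$-density of continuous zero-mean circle functions, choose a continuous $h$ on the circle with $\int_0^1 h=0$ and $\|g'-h\|_{L^1}<\varepsilon/(2C_0)$, where $C_0$ is the multiplicity constant discussed below. By Corollary~\ref{bosher}, $T$ is uniquely ergodic, so $\tfrac{1}{n}\sum_{k=0}^{n-1}h\circ T^k\to 0$ uniformly on $[0,1)$. Pick $N_0$ so that $\bigl\|\sum_{k=0}^{n-1}h\circ T^k\bigr\|_\infty<\varepsilon n/(2c)$ for every $n>N_0$; then
\[
\Bigl|\int_y^x\sum_{k=0}^{n-1}h(T^k t)\,dt\Bigr|\le \frac{c}{n}\cdot\frac{\varepsilon n}{2c}=\frac{\varepsilon}{2}.
\]
For the remainder $\psi=g'-h$, the isometric change of variables $s=T^k t$ on $\Delta_i^n$ yields
\[
\sum_{k=0}^{n-1}\int_y^x|\psi(T^k t)|\,dt=\int_0^1|\psi(s)|\,N_n(s)\,ds,\qquad N_n(s):=\#\{0\le k\le n-1:T^{-k}s\in[y,x]\}.
\]
Assuming the uniform multiplicity bound $N_n\le C_0$, the remainder contributes at most $C_0\|\psi\|_{L^1}<\varepsilon/2$, and summing the two estimates finishes the proof.

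The main technical obstacle is the uniform bound $N_n\le C_0=C_0(c,r)$ on the number of visits of an $n$-orbit to an interval of size $\le c/n$. The plan here is to pick a Rauzy induction step $N$ with $h_{\min}^{(N)}\sim n$: by balance the $n$ consecutive iterates $\{T^k s\}_{k=0}^{n-1}$ visit $n$ pairwise distinct floors of the tower decomposition $\{T^\ell I_j^{(N)}\}$, while a fixed interval of length $\le c/n$ meets only boundedly many such floors since balance forces every floor length to be comparable to $1/n$. It is here, and only here, that the full strength of Definition~\ref{bal}---in particular clause (ii), which controls the forward-orbit structure of each discontinuity individually---is genuinely used, rather than just the partition-size estimate of clause (i).
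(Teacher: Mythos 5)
Your overall strategy coincides with the paper's: approximate $g'$ in $L^1$ by a continuous function, treat the continuous part by unique ergodicity (Corollary~\ref{bosher}) together with the bound $|x-y|\le c/n$ coming from clause (i) of Definition~\ref{bal}, and control the $L^1$-small remainder by a bounded-overlap estimate for the intervals $T^k[y,x]$, $0\le k\le n-1$. The first two steps are correct and are essentially identical to the paper's (the paper merely allows its approximant a small nonzero mean instead of exactly zero mean, which changes nothing).

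The genuine gap is precisely the step you yourself flag as ``the main technical obstacle,'' namely the uniform multiplicity bound $N_n\le C_0$. Your proposed route through Rauzy towers does not go through as sketched, for two reasons. First, it presupposes that Rauzy induction is defined at every step (i.e.\ the IDOC), which is not among the hypotheses of the lemma. Second, and more seriously, you need a lower bound of order $1/n$ on the widths $\lambda_j^{(N)}$ of the floors of the towers at a step $N$ with $h^{(N)}_{\min}\sim n$; that is a balance condition on the Rauzy towers themselves, and nothing in the paper (or in your argument) derives it from Definition~\ref{bal}. The paper establishes only the reverse-direction implication, and only for IETs of periodic type (Lemma~\ref{kazdypjestb}); ``balanced partition lengths'' in the sense of Definition~\ref{bal} is not shown to imply ``balanced Rauzy towers.'' The paper's own treatment of this step is more elementary and avoids Rauzy induction entirely: from the balanced partition lengths assumption it deduces that the points $T^kx$, $0\le k\le n-1$, are pairwise at distance at least $1/(cn)$, while each interval $T^k[y,x]$ has length $|x-y|\le c/n$; hence every point of $[0,1)$ belongs to at most $[c^2]+1$ of the intervals $T^k[y,x]$, and the remainder contributes at most $([c^2]+1)\,Var(g-g_{\varepsilon})$. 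If you substitute this separation argument for your tower argument, your proof closes; as written, the multiplicity bound remains unproved.
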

\begin{proof}
Fix $\varepsilon>0$. We claim that there exists a $C^1$-function $g_{\varepsilon}\colon [0,1) \to \mathbb{R}$ such that
\begin{equation*}
Var(g-g_{\varepsilon})<\frac{\varepsilon}{2([c^2]+1)}
\end{equation*}
and
\begin{equation*}
\left|\int_0^1 g_{\varepsilon}'(x)\ dx \right|<\frac{\varepsilon}{4c}.
\end{equation*}
Indeed, since $g$ is absolutely continuous, there exists $f\in L^1([0,1))$ and $a\in\mathbb{R}$ such that
\begin{equation*}
g(x)=a+\int_0^x f(y)\ dy
\end{equation*}
for $x\in [0,1)$. Let function $f_{\varepsilon}\in C([0,1))$ be such that
\begin{equation*}
\|f-f_\varepsilon \|_{L^1}<\min \left\{\frac{\varepsilon}{2([c^2]+1)},\frac{\varepsilon}{4c} \right\}
\end{equation*}
and let $g_{\varepsilon}(x)=a+\int_0^x f_{\varepsilon}(y)\ dy$ for $x\in [0,1)$. Then indeed
\begin{equation*}
Var(g-g_{\varepsilon})=\int_0^1 |f(x)-f_{\varepsilon}(x)|\ dx<\frac{\varepsilon}{2([c^2]+1)}
\end{equation*}
and
\begin{multline*}
\left| \int_0^1 g'_{\varepsilon}(x) dx \right|=\left|\int_0^1 g'_{\varepsilon}(x)-g(x)\ dx \right|= \left| \int_0^1 f_{\varepsilon}(x)-f(x)\ dx\right|\\
\leq \int_0^1 |f_{\varepsilon}(x)-f(x)|\ dx<\frac{\varepsilon}{4c}.
\end{multline*}

By Corollary~\ref{bosher}, $T$ is uniquely ergodic and therefore
\begin{equation*}
\left| \lim_{n\to\infty}\frac{1}{n}\sum_{j=0}^{n-1}g_{\varepsilon}'(T^jx) \right|=\left| \int_0^1 g_{\varepsilon}'(x)\ dx\right| <\frac{\varepsilon}{4c},
\end{equation*}
where the convergence is uniform with respect to $x$.\footnote{IETs are homeomorphisms of some Cantor sets, see~\cite{MMY05} S.~Marmi, P.~Moussa, J.-C.~Yoccoz.}  In other words, there exists $N_0\in\mathbb{N}$ such that
\begin{multline}\label{wn8g}
\left|\sum_{j=0}^{n-1}g_{\varepsilon}'(T^jx)\right| \leq \left|\sum_{j=0}^{n-1} g_{\varepsilon}'(T^jx) -n\int_0^1 g_{\varepsilon}'(x)\ dx\right|+\left|n\int_0^1 g_{\varepsilon}'(x)\ dx \right|
\\
\leq n\cdot \frac{\varepsilon}{4c}+n\cdot \frac{\varepsilon}{4c}=n\cdot\frac{\varepsilon}{2c}
\end{multline}
for $n>N_0$ and all $x\in [0,1)$. Fix $n>N_0$, let $0\leq i\leq (n-1)r$ and take $x,y\in\Delta_i^n$, $x<y$. For $0\leq j\leq n-1$ we have $|T^jx-T^jy|=|x-y|$, whence
\begin{equation*}
\sum_{j=0}^{n-1}\left(g_{\varepsilon}(T^jx)-g_{\varepsilon}(T^jy)\right)=\int_x^y \sum_{j=0}^{n-1}g_{\varepsilon}'(T^jz)\ dz.
\end{equation*}
Therefore, by~\eqref{wn8g} and by the assumption that $T$ has balanced partition lengths with constant $c$, we obtain
\begin{equation*}
\left| \sum_{j=0}^{n-1}\left( g_{\varepsilon}(T^jx)-g_{\varepsilon}(T^jy) \right) \right| \leq 
\int_x^y \left| \sum_{j=0}^{n-1}g_{\varepsilon}'(T^jz) \right| dz 
\\
\leq n\frac{\varepsilon}{2c}|x-y|\leq n\frac{\varepsilon}{2c}\frac{c}{n}=\frac{\varepsilon}{2}.
\end{equation*}

Let us consider the following family of intervals:
\begin{equation*}
\mathcal{I}=\{[x,y],[Tx,Ty],\dots,[T^{n-1}x,T^{n-1}y]\}.
\end{equation*}
For every $0 \leq i \neq j \leq n-1$, using the assumption that $T$ has balanced partition lengths, we obtain
\begin{equation*}
|T^ix-T^jx|\geq\frac{1}{cn}.
\end{equation*}
Moreover, for $0\leq i\leq n-1$
\begin{equation*}
\left|T^{i}x-T^iy\right|=\left|x-y\right|\leq\frac{c}{n}.
\end{equation*}
It follows that a point from $[0,1)$ belongs to at most $[c^2]+1$ intervals from the family $\mathcal{I}$. Therefore
\begin{multline*}
|(g^{(n)}(x)-g^{(n)}(y))-(g_{\varepsilon}^{(n)}(x)-g_{\varepsilon}^{(n)}(y))|
\\
\leq \sum_{j=0}^{n-1}|(g-g_{\varepsilon})(T^jx)-(g-g_{\varepsilon})(T^jy)|
\leq\sum_{j=0}^{n-1} Var_{[T^jx,T^jy]}(g-g_{\varepsilon})
\\
\leq ([c^2]+1)Var(g-g_{\varepsilon})<\frac{\varepsilon}{2}.
\end{multline*}
Hence
\begin{equation*}
|g^{(n)}(x)-g^{(n)}(y)|\leq |g_{\varepsilon}^{(n)}(x)-g_{\varepsilon}^{(n)}(y)|+\frac{\varepsilon}{2}<\varepsilon
\end{equation*}
which completes the proof.
\end{proof}
\begin{uw}
Notice that for an absolutely continuous function $g\colon [0,1) \to \mathbb{R}$ the conditions $g(0)=\lim_{x\to 1}g(x)$ and $\int_0^1 g'(x)dx=0$ are equivalent. Notice also that the assertion of the above lemma remains true if we replace $g$ with $g_1+g_3$ where $g_1$ is absolutely continuous satisfying $\int_0^1 g_1'(x)dx=0$ and $g_3$ is piecewise constant and continuos whenever the IET is.
\end{uw}

Let $f,g\colon [0,1) \to \mathbb{R}$ be as described in Section~\ref{reduction} (i.e. $f$ is given by the formula~\eqref{fczyste} and $g=g_1+g_2+g_3$ where $g_1$ is absolutely continuous with $g(0)=\lim_{x \to 1}g(x)$, $g_2$ is linear and $g_3$ is piecewise constant and is continuous whenever $g$ is so). Fix 
\begin{equation}\label{hutu}
0<\varepsilon<\frac{1}{3}\min(f+g)
\end{equation}
and let $N_0\in\mathbb{N}$ be as in the assertion of Lemma~\ref{wn8}. Now we will describe a procedure of choosing a partition of the interval $[0,1)$ into $\Delta_1,\dots,\Delta_n$, which depends on the functions $f$ and $g$, on the parameter $t$ and on $\varepsilon$. 
We will use this partition in the proof of the main theorem. To make clear what functions or parameters we mean, we will indicate it in the parentheses: $\Delta_i(f,g,t,\varepsilon)$.

Let
\begin{equation*}
j_0=\max\{j\in\mathbb{N}\colon \left(\exists\ {x\in [0,1)}\right) \left|f^{(j)}(x)+g^{(j)}(x)-t\right|<\varepsilon\}.
\end{equation*}
Since $\min(f+g)>0$, $j_0$ is finite and therefore determines a partition of the interval $[0,1)$ into subintervals $\Delta_0^{j_0},\dots, \Delta_{(r-1)j_0}^{j_0}$ (for the definition of these subintervals see page~\pageref{tajk}). For $0\leq i\leq (r-1)j_0$ set
\begin{equation*}
j_i^{j_0}=\max\left\{j\in\mathbb{N}\colon \left(\exists\ {x\in \Delta_{i}^{j_0}}\right) \left|f^{(j)}(x)+g^{(j)}(x)-t\right|<\varepsilon \right\}\in \mathbb{N} \cup \{-\infty\}
\end{equation*}
(we put $j_i^{j_0}=-\infty$ if the set is empty). Let 
\begin{equation*}
P=\{x_i^{j_0}\colon 0\leq i\leq (r-1)j_0\}.
\end{equation*}
We are interested in the strip $[0,1)\times (t-\varepsilon,t+\varepsilon)$ and this is why the partition determined by $P$ might be too fine for our purposes, i.e. all the functions $f^{(j)}+g^{(j)}$ for $j\geq N$ such that $\left(f^{(j)}+g^{(j)}\right)(\Delta_i)\cap (t-\varepsilon,t+\varepsilon)\neq \emptyset$ might be continuous at the endpoints of $\Delta_i$ for some $i$. Therefore we remove now some points from $P$. The procedure consists of three steps. After each of them, by abuse of notation, we still denote the reduced set of the partition points by the same letter $P$.

\begin{figure}[ht]
\centering
\includegraphics[width=300pt]{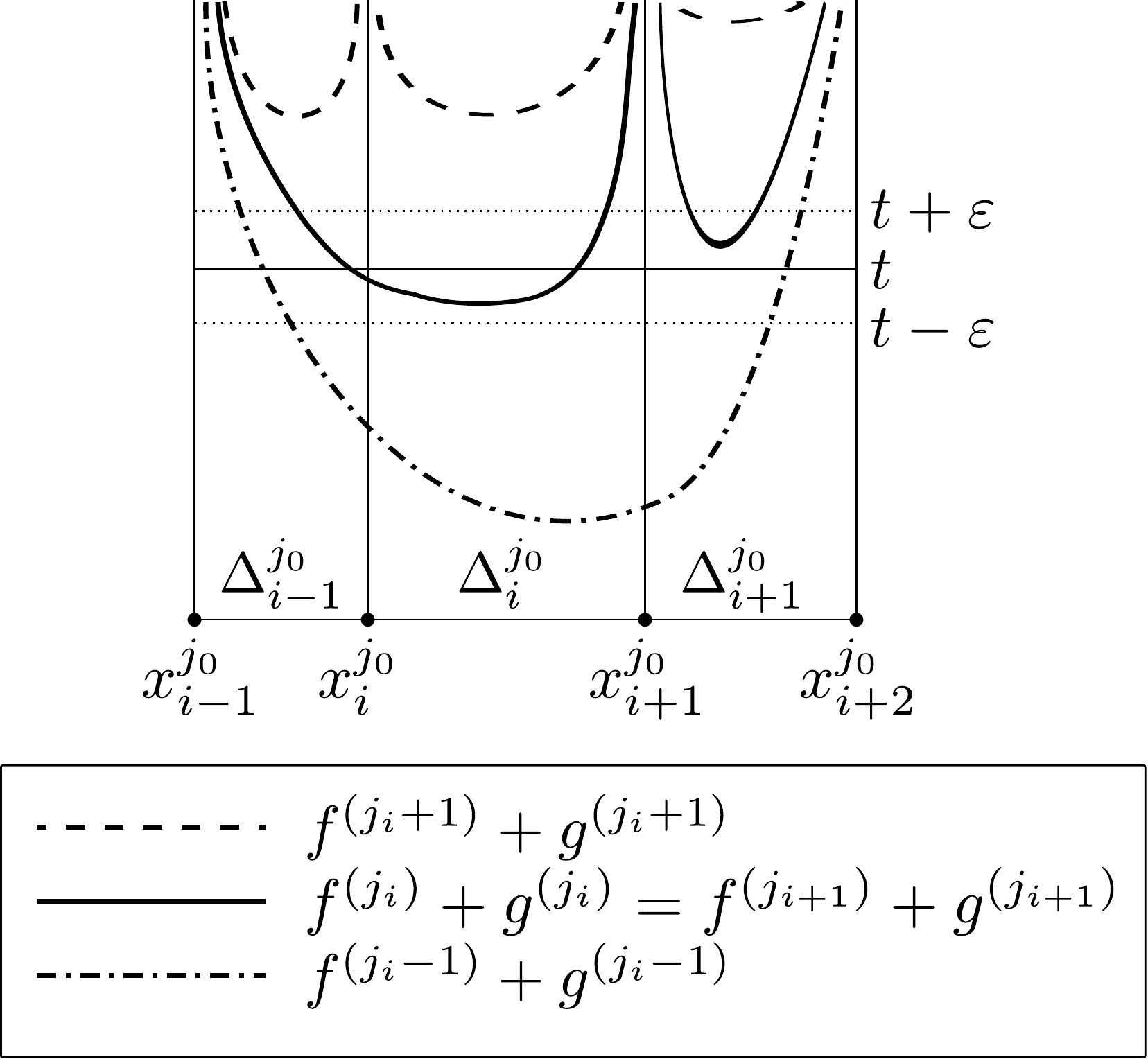}
\caption{Assume $j_{i-1}=j_i=j_{i+1}$. \emph{Step 1.} Remove $x_i^{j_0}$. Don't remove $x_{i+1}^{j_0}$}
\label{step1-1}
\end{figure}
\begin{figure}[ht]
\centering
\includegraphics[width=300pt]{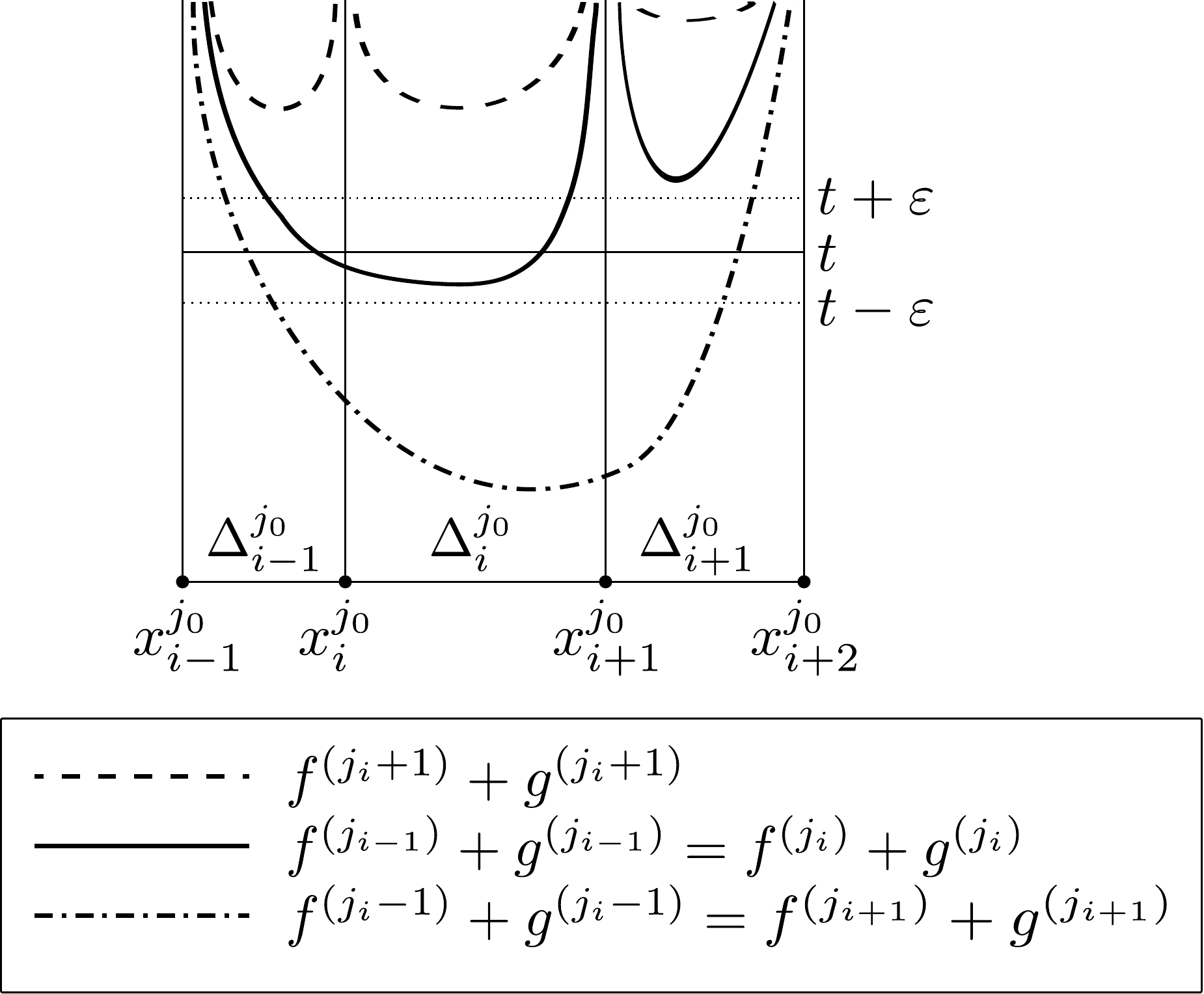}
\caption{Assume $j_{i-1}=j_i\neq j_{i+1}$. \emph{Step 1.} Remove $x_i^{j_0}$. \emph{Step 2.} Remove $x_{i+1}^{j_0}$}
\label{step1-2}
\end{figure}

\begin{figure}[ht]
\centering
\includegraphics[width=200pt]{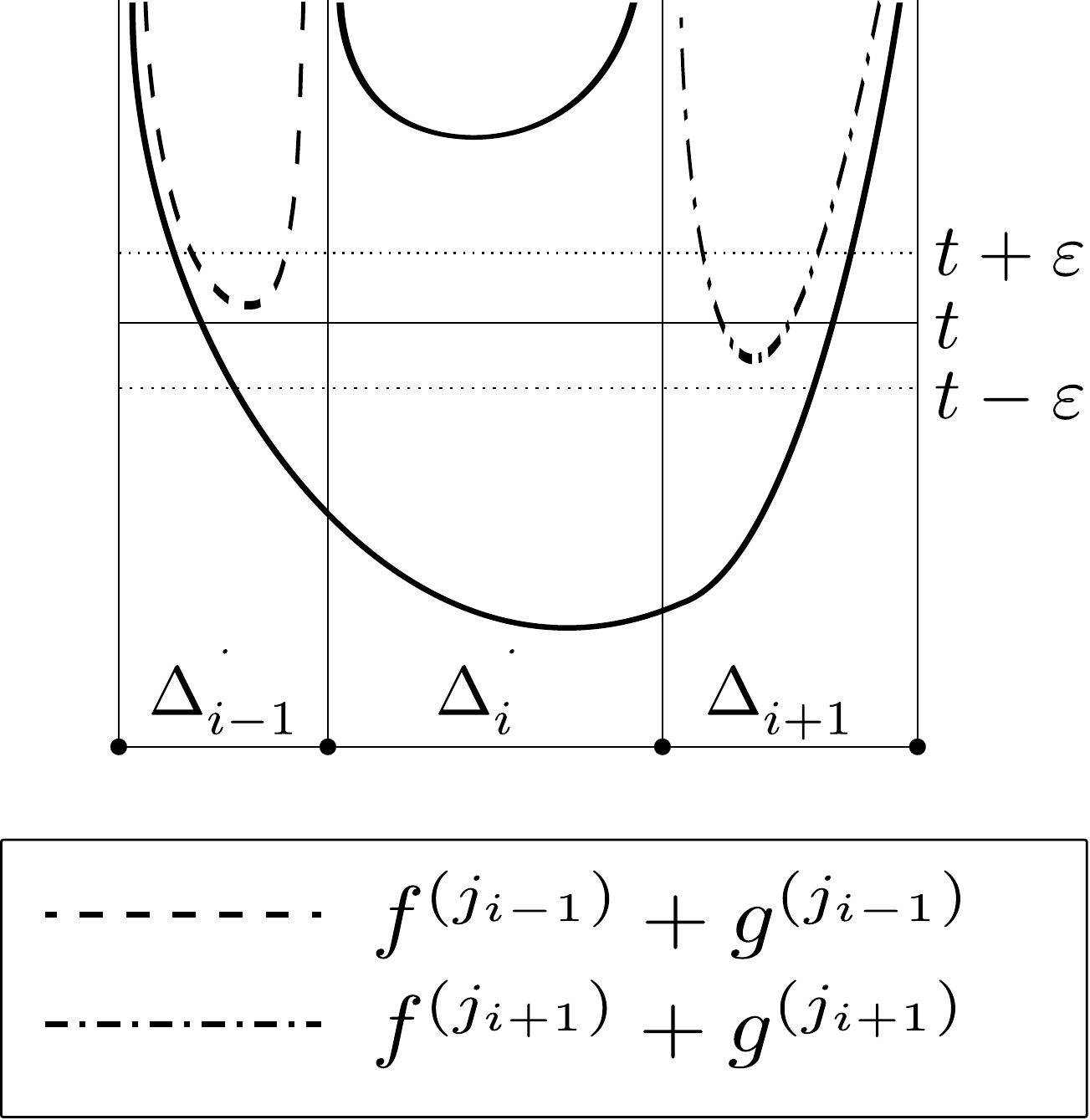}
\caption{\emph{Step 3.} Don't remove any of the points $\inf\Delta_i$, $\sup\Delta_i$}
\label{step3-1}
\end{figure}
\begin{figure}[ht]
\centering
\includegraphics[width=200pt]{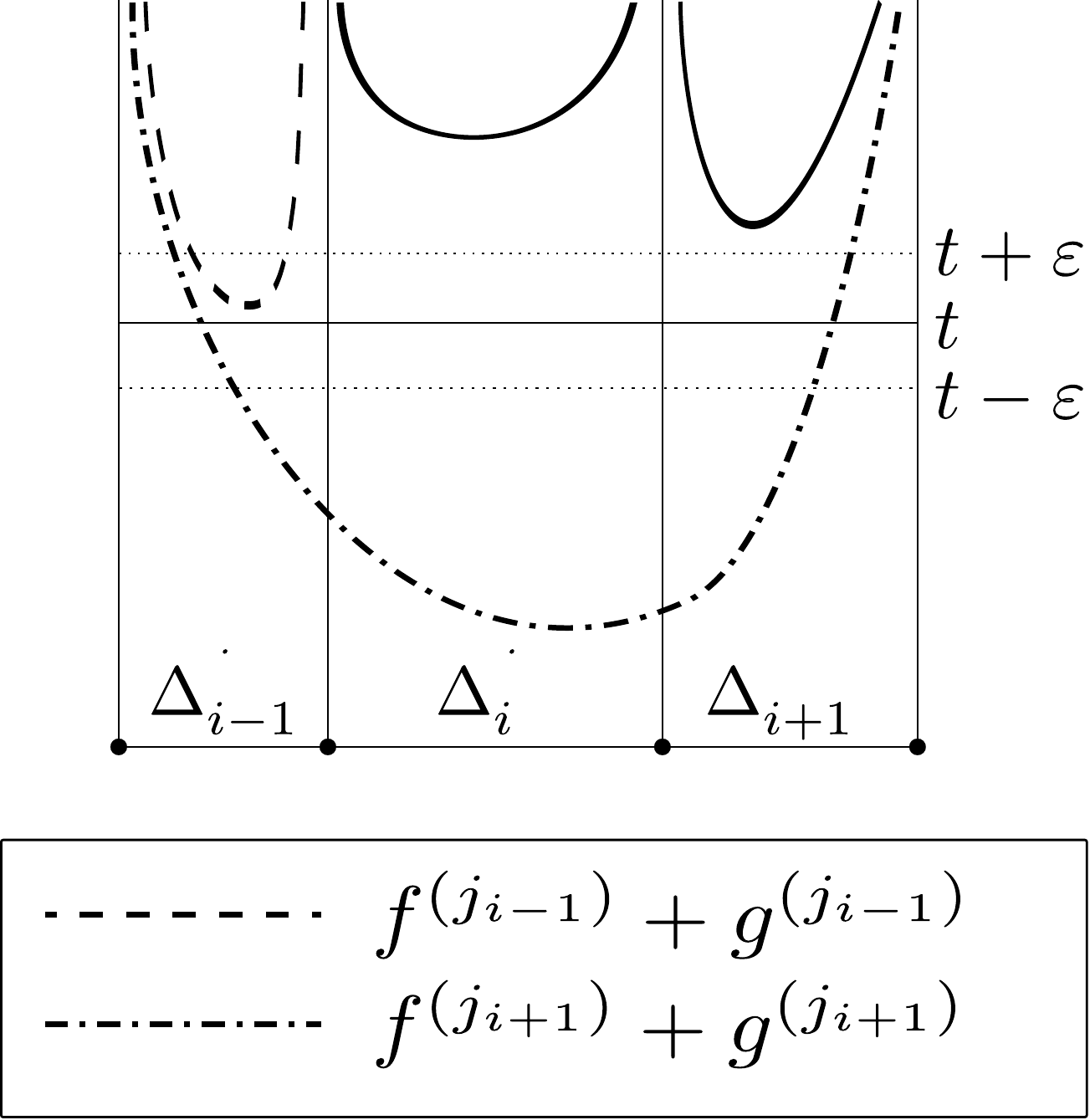}
\caption{\emph{Step 3.} Remove both $\inf\Delta_i$ and $\sup\Delta_i$}
\label{step3-2}
\end{figure}


\emph{Step 1} (see Fig.~\ref{step1-1} and~\ref{step1-2}).
Find all $0\leq i< (r-1)j_0$ such that $j_i^{j_0}=j_{i+1}^{j_0}=-\infty$ or $j_i^{j_0}=j_{i+1}^{j_0}>-\infty$ and function $f^{(j_i^{j_0})}+g^{(j_i^{j_0})}$ is continuous at $x_i^{j_0}$. Remove points $x_i^{j_0}$ from $P$ for all such $i$'s.

\emph{Step 2} (see Fig.~\ref{step1-2}).
Find all $0\leq i< (r-1)j_0$ such that $j_i^{j_0}\neq j_{i+1}^{j_0}$, $j_i^{j_0}, j_{i+1}^{j_0}>-\infty$ and at least one of the functions $f^{(j_i^{j_0})}+g^{(j_i^{j_0})},\ f^{(j_{i+1}^{j_0})}+g^{(j_{i+1}^{j_0})}$ is continuous at $x_i^{j_0}$. Remove points $x_i^{j_0}$ from $P$ for all such $i$'s.

\emph{Step 3} (see Fig.~\ref{step3-1} and~\ref{step3-2}).
To describe what to do in the last step of the construction, denote first the intervals of the partition determined by $P$ from the left to the right by $\Delta_1, \dots,\Delta_n$, where $n$ is the number of elements of $P$. For $1\leq i\leq n$ denote by $d_i$ the length of the interval $\Delta_i$ and put
\begin{equation}\label{joti}
j_i=\max \left\{j \in\mathbb{N}\colon \left(\exists\ {x \in \Delta_i}\right) \left|f^{(j)}(x)+g^{(j)}(x)-t\right|<\varepsilon\right\} \in\mathbb{N}\cup\{-\infty\}.
\end{equation}

For $1\leq i\leq n-1$ we claim that either $j_i,j_{i+1}\in\mathbb{N}$ and by \emph{Step 1} and \emph{Step 2} the point $\sup \Delta_i=\inf \Delta_{i+1}$ is a discontinuity for both $f^{(j_i)}+g^{(j_i)}$ and $f^{(j_{i+1})}+g^{(j_{i+1})}$, or exactly one of the numbers $j_i,j_{i+1}$ is equal to $-\infty$. Indeed, suppose that one of the functions $f^{(j_i)}+g^{(j_i)}$ or $f^{(j_{i+1})}+g^{(j_{i+1})}$ is continuous at $\sup \Delta_i=\inf \Delta_{i+1}$ and $j_i,j_{i+1}\in\mathbb{N}$. Notice that each interval $\Delta_i$ is a union of subintervals of the form $\Delta_s^{j_0}$ and $j_i \geq j_s^{j_0}$ for any number $s$ such that $\Delta_s^{j_0}\subset \Delta_i$. Therefore by \emph{Step 1} or \emph{Step 2} of the construction we would have removed point $\sup \Delta_i=\inf \Delta_{i+1}$ from set $P$. Hence whenever one of the functions $f^{(j_i)}+g^{(j_i)}$ or $f^{(j_{i+1})}+g^{(j_{i+1})}$ is continuous at $\sup \Delta_i=\inf \Delta_{i+1}$ then at least one of the numbers $j_i, j_{i+1}$ is equal to $-\infty$. If $j_i=j_{i+1}=-\infty$, we would have removed point $\sup \Delta_i=\inf \Delta_{i+1}$ from set $P$ by \emph{Step 3} of the construction, whence exactly one of the numbers $j_i,j_{i+1}$ is equal to $-\infty$. Now we concentrate our attention on $1\leq i\leq n$ such that $j_i=-\infty$. For simplicity of notation put $j_0=j_{n+1}=0$. By \emph{Step 3}, we have $j_{i-1}, j_{i+1}\in\mathbb{N}$ whenever $j_i=-\infty$. If $f^{(j_{i-1})}+g^{(j_{i-1})}$ is continuous at $\inf \Delta_i$ or $f^{(j_{i+1})}+g^{(j_{i+1})}$ is continuous at $\sup\Delta_i$, we remove both $\inf\Delta_i$ and $\sup\Delta_i$ from $P$. The construction is complete and again by abuse of notation, we continue to denote the intervals of the partition determined by $P$ by $\Delta_1,\dots,\Delta_n$ and their lengths by $d_1,\dots,d_n$. The numbers $j_i$ are still defined by formula~\eqref{joti} for the \emph{new} intervals $\Delta_i$. 

For $j\in\mathbb{N}$ and $1\leq i\leq n$ set
\begin{align*}
\Delta_{i,j}(f,g,t,\varepsilon)&=\left\{x\in\Delta_i\colon \left|f^{(j)}(x)+g^{(j)}(x)-t\right|<\varepsilon\right\},
\\
\Delta_{i,j}^{-}(f,g,t,\varepsilon)&=\Delta_{i,j}(f,g,t,\varepsilon)\cap \left\{x\in[0,1)\colon f'^{(j)}(x)<0\right\},
\\
\Delta_{i,j}^{+}(f,g,t,\varepsilon)&=\Delta_{i,j}(f,g,t,\varepsilon)\cap \left\{x\in[0,1)\colon f'^{(j)}(x)>0\right\}.
\end{align*}
If there is no ambiguity, we will write briefly $\Delta_{i,j}$, $\Delta_{i,j}^{-}$ and $\Delta_{i,j}^{+}$. 
\begin{lm}\label{partycja}
The partition of $[0,1)$ into $\Delta_1,\dots,\Delta_n$ described above satisfies the following properties:
\begin{enumerate}\label{wlasnosci}
\item
each interval $\Delta_i$ of the partition is a finite union of maximal intervals on which $f^{(j_i)}+g^{(j_i)}$ is continuous;
\item
for each interval $\Delta_i$ of the partition with $j_i>-\infty$ and for every $N_0< j\leq j_i$ there exists a unique number $0\leq q\leq (r-1)j$ ($q=q(i,j)$) such that $\Delta_q^j\cap\Delta_{i,j}\neq\emptyset$.
\end{enumerate}
\end{lm}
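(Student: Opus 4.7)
My approach is to treat (1) as a consequence of the bookkeeping in the construction of $P$, and (2) as a combination of that construction with the strict convexity and blow-up of $f^{(j)}$ on each $\Delta_q^j$ given by Lemma~\ref{pochodna} and the $\varepsilon$-oscillation control on $g^{(j)}$ given by Lemma~\ref{wn8}.

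For (1), I would show that every endpoint of $\Delta_i$ lying strictly inside $(0,1)$ is a discontinuity of $f^{(j_i)}+g^{(j_i)}$; granted this, $\Delta_i$ is bracketed by such discontinuities or by the endpoints $0,1$ of $[0,1)$, and hence decomposes into the finitely many maximal continuity intervals of $f^{(j_i)}+g^{(j_i)}$ that fit inside it. Such an interior endpoint $x$ of $\Delta_i$ is a surviving point of $P$ and so was not eliminated by any of the three reduction steps. If the neighbour interval $\Delta_{i\pm 1}$ has $j_{i\pm 1}=j_i$ finite, then \emph{Step~1} would have deleted $x$ had $f^{(j_i)}+g^{(j_i)}$ been continuous there, so it is not. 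If $j_{i\pm 1}\neq j_i$ with both finite, then \emph{Step~2} would have removed $x$ unless both $f^{(j_i)}+g^{(j_i)}$ and $f^{(j_{i\pm 1})}+g^{(j_{i\pm 1})}$ fail continuity at $x$; in particular the first one does. If $j_{i\pm 1}=-\infty$, then \emph{Step~3} would have absorbed the dead neighbour had $f^{(j_i)}+g^{(j_i)}$ been continuous at $x$, which, since this did not happen, again forces the required discontinuity. These cases exhaust all possibilities and prove (1).

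For (2), I would argue by contradiction. Suppose $\Delta_{q_1}^j\cap\Delta_{i,j}\neq\emptyset$ and $\Delta_{q_2}^j\cap\Delta_{i,j}\neq\emptyset$ for $q_1<q_2$. Then a partition point $z\in\mathcal{P}_j$ lies strictly inside $\Delta_i$, and it must have been dropped during Steps~1--3. Let $\Delta_{s}^{j_0}$ and $\Delta_{s+1}^{j_0}$ be the $\mathcal{P}_{j_0}$-cells on either side of $z$ inside $\Delta_i$. The removal rule requires $f^{(j_s^{j_0})}+g^{(j_s^{j_0})}$ (or its right-hand counterpart) to be continuous at $z$; however, $f^{(k)}$ inherits a logarithmic singularity at $z$ as soon as $k\ge k_z$, where $k_z$ is the first index with $z\in\mathcal{P}_k$. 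Continuity at $z$ therefore forces $j_s^{j_0}<k_z\le j$, so that $\Delta_s^{j_0}\cap\Delta_{i,j}=\emptyset$. Next invoke (a) the strict convexity of $f^{(j)}$ on $\Delta_{q_1}^j$ with $f^{(j)}(x)\to+\infty$ as $x\to z^{-}$ (Lemma~\ref{pochodna}), and (b) the bound $\mathrm{osc}_{\Delta_{q_1}^j}\,g^{(j)}<\varepsilon$ from Lemma~\ref{wn8}, valid because $j>N_0$. Together (a) and (b) force $\{x\in\Delta_{q_1}^j:f^{(j)}(x)+g^{(j)}(x)<t+\varepsilon\}$ to be a single sub-interval whose right endpoint lies strictly to the left of $\Delta_s^{j_0}$, since $f^{(j)}+g^{(j)}>t+\varepsilon$ throughout $\Delta_s^{j_0}$. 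Iterating the same argument on each interior $\mathcal{P}_{j_0}$-point between $z$ and the putative crossing location in $\Delta_{q_1}^j$ pushes that crossing out of $\Delta_i$ altogether, contradicting $\Delta_{q_1}^j\cap\Delta_{i,j}\neq\emptyset$.

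The main obstacle is the final iteration step of (2): the removal conditions in Steps~1--3 only give continuity of $f^{(j_s^{j_0})}+g^{(j_s^{j_0})}$ at the removed point, while we need information about $f^{(j)}+g^{(j)}$ for the specific index $N_0<j\le j_i$ that in general exceeds $j_s^{j_0}$. Bridging these two scales is where the strict convexity of $f^{(j)}$ on each $\Delta_q^j$, together with the inductive propagation of the ``$>t+\varepsilon$'' region across consecutive $\Delta_s^{j_0}$-subcells of $\Delta_{q_1}^j\cap\Delta_i$, must do the work. Once this inductive bookkeeping is carefully set up, both claims of the lemma follow.
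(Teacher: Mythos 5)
Your argument for Property 1 is essentially the paper's own (every surviving interior point of $P$ must be a discontinuity of $f^{(j_i)}+g^{(j_i)}$, by inspecting Steps 1--3), and that part is fine. The proof of Property 2, however, has a genuine gap, and it is exactly the one you flag at the end. From the removal of the point $z\in\mathcal{P}_j$ you can only extract information about the two $\mathcal{P}_{j_0}$-cells \emph{adjacent} to $z$: the continuity condition gives $j_s^{j_0}<k_z\le j$, hence $\Delta_s^{j_0}\cap\Delta_{i,j}=\emptyset$ and (via the blow-up of $f^{(j)}$ at $z$ and connectedness) $f^{(j)}+g^{(j)}\ge t+\varepsilon$ on $\Delta_s^{j_0}$. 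But this does not contradict $\Delta_{q_1}^j\cap\Delta_{i,j}\neq\emptyset$, because convexity of $f^{(j)}$ on $\Delta_{q_1}^j$ pushes the sublevel set $\{x\colon f^{(j)}(x)+g^{(j)}(x)<t+\varepsilon\}$ \emph{away} from the singularity at $z$, into the interior subcells of $\Delta_{q_1}^j$; knowing the function lies above the strip on the boundary subcell says nothing about those. Your proposed iteration over the interior $\mathcal{P}_{j_0}$-points $w$ of $\Delta_{q_1}^j$ cannot close this, because such $w$ satisfy $w\notin\mathcal{P}_j$, hence $k_w>j$, and the removal conditions then only give $j_{s(w)}^{j_0}<k_w$, which is perfectly compatible with $j_{s(w)}^{j_0}\ge j$, i.e.\ with $\Delta_{s(w)}^{j_0}$ meeting $\Delta_{i,j}$. (Moreover, in the Step~1 case with both indices equal to $-\infty$ and in Step~3, no continuity condition of the form you invoke is available.)

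The missing ingredient is a comparison between Birkhoff sums at \emph{different} times, which your argument never uses: for $k<j'$ one has $f^{(j')}+g^{(j')}\ge f^{(k)}+g^{(k)}+\min(f+g)$ pointwise, and combining this with the oscillation bound of Lemma~\ref{wn8}, the monotonicity/convexity from Lemma~\ref{pochodna}, and the standing choice $3\varepsilon<\min(f+g)$ yields the inequality $t-2\varepsilon<t+\varepsilon-\min(f+g)$, a contradiction. This is how the paper shows that the entire region between the two putative components of $\Delta_{i,j}$ is disjoint from $\Delta_{i,k}$ for \emph{every} $k\ge N_0$ simultaneously; only then does the construction of $P$ (Step~3) produce the final contradiction. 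Without an input of this kind, the removal rules alone only constrain the maximal indices $j_s^{j_0}$ and cannot control the specific time $j$, so the argument as proposed does not go through.
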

\begin{proof}

\emph{Property 1.}
Notice that by construction the endpoints of $\Delta_i$ are discontinuity points for $f^{(j_i)}+g^{(j_i)}$ (otherwise we would have removed them from $P$ - see \emph{Step 1} or \emph{Step 2} of the construction of the partition). Therefore the partition has required Property 1.

\emph{Property 2.}
Suppose, for contradiction, that there exist $y,y'\in\Delta_i$ ($y\leq y'$), $j,j'\in\mathbb{N}$ ($N_0\leq j,j'\leq j_i$) such that
\begin{itemize}
\item
\begin{equation*}
\lim_{x\to y^{-}}f^{(j)}(x)=\lim_{x\to y'^{+}}f^{(j')}(x)=+\infty,
\end{equation*}
\item
\begin{equation*}
\Delta_{i,j}\cap (0,y)\neq \emptyset,\ \Delta_{i,j'}\cap (y',1)\neq \emptyset,
\end{equation*}
\item
there is no point $z\in (0,y)$ such that
\begin{equation*}
\lim_{x\to z^{-}}f^{(j)}(x)=+\infty \text{ and }\Delta_{i,j}\cap (0,y)=\Delta_{i,j}\cap (0,z)
\end{equation*}
\item
and no point $z'\in(y',1)$ such that
\begin{equation*}
\lim_{x\to z'^{+}}f^{(j')}(x)=+\infty \text{ and }\Delta_{i,j'}\cap (y',1)=\Delta_{i,j'}\cap (z',1).
\end{equation*}
\end{itemize}
Assume also that $y$ and $y'$ are the closest such points, meaning that the condition $\Delta_{i,k}\cap (y,y')\neq\emptyset$ for some $k\geq N_0$ implies that the function $f^{(k)}+g^{(k)}$ is continuous on $(y,y')$ (see Fig.~\ref{bla}).

\begin{figure}[ht]
\centering
\includegraphics[height=150pt]{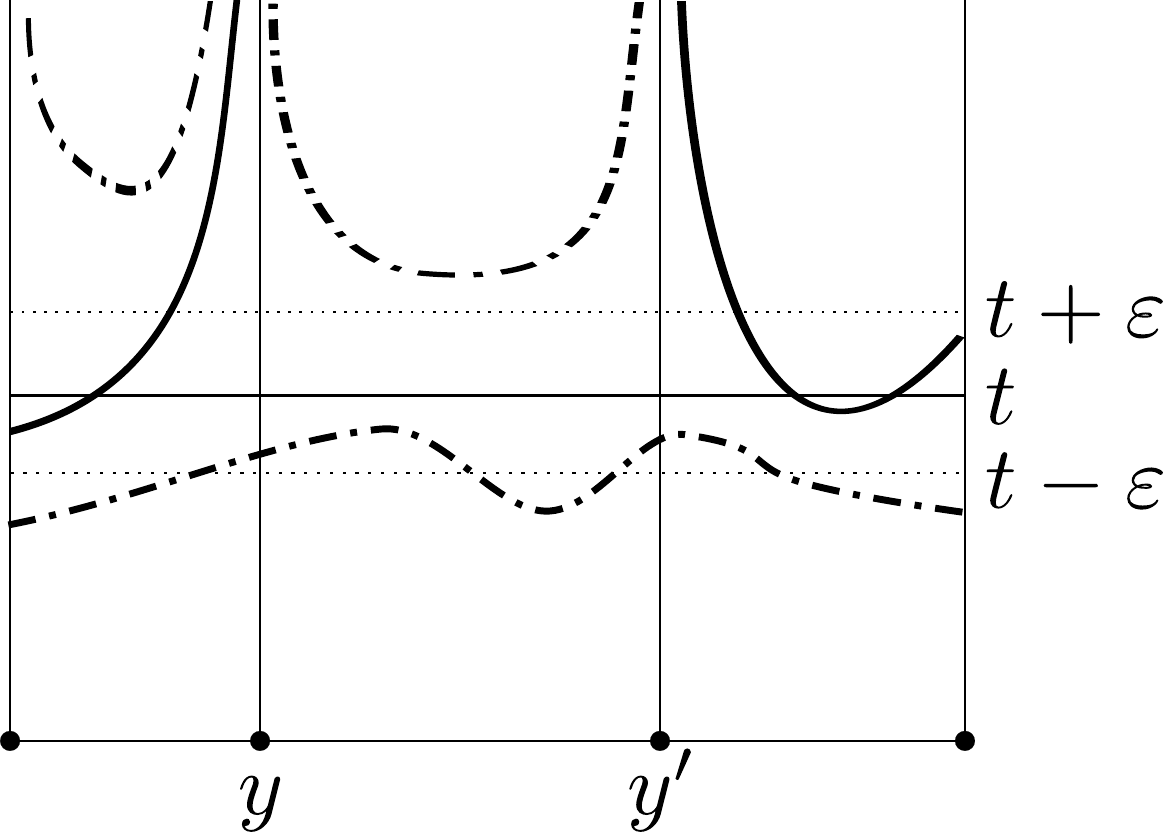}
\caption{A situation in the proof of \emph{Property 2}}
\label{bla}
\end{figure}

We claim that
\begin{equation}\label{pustawy}
\Delta_{i,k}\cap (y,y')=\emptyset\text{ for all }k\geq N.
\end{equation}
Suppose, to derive a contradiction, that this is not true and for some $k\geq N_0$ there exists $z\in (y,y')$ such that $f^{(k)}(z)+g^{(k)}(z)\in(t-\varepsilon,t+\varepsilon)$. Without loss of generality we may assume that $f^{(k)}$ is increasing on $(z,y')$ (if this is not the case, then it is decreasing on $(y,z)$). Let $\overline{y}=\inf\{x\geq y'\colon f^{(j')}(x)+g^{(j')}(x)\in(t-\varepsilon,t+\varepsilon)\}$. Notice that $k<j,j'$ (otherwise $f^{(k)}$ would be discontinuous at $y$ or $y'$ and this would contradict our choice of $y$ and $y'$). We have
\begin{equation*}
f^{(k)}(\overline{y})+g^{(k)}(\overline{y})\leq f^{(j')}(\overline{y})+g^{(j')}(\overline{y})-\min(f+g)=t+\varepsilon-\min(f+g).
\end{equation*}
On the other hand
\begin{equation*}
f^{(k)}(\overline{y})+g^{(k)}(\overline{y})\geq f^{(k)}(z)+g^{(k)}(z)-\varepsilon>t-2\varepsilon,
\end{equation*}
where the first inequality follows from Lemma~\ref{wn8} and from the fact that $f^{(k)}$ is increasing on $(z,\overline{y})$. Hence
\begin{equation*}
t-2\varepsilon<t+\varepsilon-\min(f+g)
\end{equation*}
and this is impossible since $3\varepsilon<\min(f+g)$ (see~\eqref{hutu}, page~\pageref{hutu}). Therefore~\eqref{pustawy} holds. In view of the construction of $P$ (\emph{Step 3}) this is however impossible and the proof is complete.
\end{proof}

For $1\leq i\leq n$ and $N_0<j\leq j_i$ such that $\Delta_{i,j}\neq\emptyset$ pick $x_{i,j}\in\Delta_{i,j}$ and let $g_{i,j}=g^{(j)}(x_{i,j})$ for some $x\in\Delta_{i,j}$. For $N<j\leq j_i$ put
\begin{equation*}
\widetilde{\Delta}_{i,j}^{-}=\left\{ x\in\Delta_i\colon \left|f^{(j)}(x)+g_{i,j}-t\right|<2\varepsilon \text{ and } f'^{(j)}(x)<0 \right\}\cap \Delta_{q(i,j)}^j
\end{equation*}
and
\begin{equation*}
\widetilde{\Delta}_{i,j}^{+}=\left\{ x\in\Delta_i\colon \left|f^{(j)}(x)+g_{i,j}-t\right|<2\varepsilon \text{ and } f'^{(j)}(x)>0 \right\}\cap \Delta_{q(i,j)}^j,
\end{equation*}
where $q(i,j)$ is the unique number $q$ such that $\Delta_q^j\cap\Delta_{i,j}\neq\emptyset$ (such a number exists by Property 2. from Lemma~\ref{partycja}). Let
\begin{equation*}
\widetilde{\Delta}_{i,j}=\widetilde{\Delta}_{i,j}^+ \cup\widetilde{\Delta}_{i,j}^-.
\end{equation*}
\begin{uw}\label{uwaganiewiemczypotrzebna}
Let $j \geq N_0$. Notice that $\Delta_{i,j}\subset \widetilde{\Delta}_{i,j}$. Indeed, let $x \in \Delta_{i,j}$. By Property 2 in Lemma~\ref{partycja} it follows that $x\in \Delta_{q(i,j)}^j$. Therefore and by Lemma~\ref{wn8} we have
\begin{equation*}
\left|g^{(j)}(x)-g_{i,j} \right|<\varepsilon.
\end{equation*}
Moreover, by the assumption that $x\in \Delta_{i,j}$,
\begin{equation*}
\left| f^{(j)}(x)+g^{(j)}(x)-t\right|<\varepsilon.
\end{equation*}
Hence
\begin{equation*}
\left|f^{(j)}(x)+g_{i,j}-t \right|\leq \left| f^{(j)}(x)+g^{(j)}(x)-t\right|+\left|g^{(j)}(x)-g_{i,j} \right|<2\varepsilon
\end{equation*}
and our claim follows.
\end{uw}
Let $A, B\subset [0,1)$. We write $A \leq B$ if for every $a\in A$ and every $b\in B$ we have $a\leq b$. In particular, $A\leq B$ if $A=\emptyset$ or $B= \emptyset$.
\begin{lm}\label{lm4.7}
If $\varepsilon<\frac{1}{6}\min (f+g)$ then the condition $N_0< j<j'\leq j_i$ implies 
\begin{equation}\label{ukl}
\widetilde{\Delta}_{i,j}^{-}\leq \widetilde{\Delta}_{i,j'}^{-}\leq\widetilde{\Delta}_{i,j'}^{+}\leq \widetilde{\Delta}_{i,j}^{+}
\end{equation}
for all $1\leq i\leq n$.
\end{lm}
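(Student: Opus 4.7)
The approach combines two ingredients: a telescoping lower bound for the difference $F_{j'}-F_{j}$ (where I write $F_k := f^{(k)} + g_{i,k}$), together with the strict convexity of each $F_{k}$ on the cell $\Delta_{q(i,k)}^{k}$ supplied by Lemma~\ref{pochodna}. First I would establish, for any $x\in\Delta_{q(i,j)}^{j}\cap\Delta_{q(i,j')}^{j'}$, the pointwise bound $F_{j'}(x)-F_{j}(x)>4\varepsilon$. This follows from the identity
\begin{equation*}
F_{j'}(x)-F_{j}(x)=(f+g)^{(j'-j)}(T^{j}x)+(g^{(j)}(x)-g_{i,j})-(g^{(j')}(x)-g_{i,j'}),
\end{equation*}
the bound $(f+g)^{(j'-j)}(T^{j}x)\geq(j'-j)\min(f+g)>6\varepsilon$, and Lemma~\ref{wn8}, which bounds each of the two residual terms in absolute value by $\varepsilon$.

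Next I would show the containment $\widetilde{\Delta}_{i,j'}\subset\Delta_{q(i,j)}^{j}$. Fix $z\in\widetilde{\Delta}_{i,j'}$ and assume, toward a contradiction, that $z\in\Delta_{p}^{j}$ for some $p\neq q(i,j)$. Since $z\in\Delta_{q(i,j')}^{j'}$, Lemma~\ref{wn8} yields $|g^{(j')}(z)-g_{i,j'}|<\varepsilon$, which combined with $F_{j'}(z)<t+2\varepsilon$ gives $f^{(j')}(z)+g^{(j')}(z)<t+3\varepsilon$; subtracting $(f+g)^{(j'-j)}(T^{j}z)>6\varepsilon$ produces $f^{(j)}(z)+g^{(j)}(z)<t-3\varepsilon$. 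The uniqueness in Property~2 of Lemma~\ref{partycja} prevents $\Delta_{i}$ from being entirely contained in $\Delta_{p}^{j}$ (otherwise $\Delta_{p}^{j}\cap\Delta_{i,j}=\Delta_{i,j}\neq\emptyset$ would force $p=q(i,j)$), so at least one endpoint of the interval $\Delta_{p}^{j}\cap\Delta_{i}$ coincides with an endpoint of $\Delta_{p}^{j}$. Near that endpoint $f^{(j)}+g^{(j)}\to+\infty$ by Lemma~\ref{pochodna}, so the intermediate value theorem produces some $z''\in\Delta_{p}^{j}\cap\Delta_{i}$ with $(f^{(j)}+g^{(j)})(z'')\in(t-\varepsilon,t+\varepsilon)$; this gives $z''\in\Delta_{p}^{j}\cap\Delta_{i,j}$, contradicting uniqueness.

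Combining the two steps, every $z\in\widetilde{\Delta}_{i,j'}$ lies in $\Delta_{q(i,j)}^{j}$ and satisfies $F_{j}(z)<F_{j'}(z)-4\varepsilon<t-2\varepsilon$, so $\widetilde{\Delta}_{i,j'}\subset\{F_{j}<t-2\varepsilon\}\cap\Delta_{q(i,j)}^{j}$. By Lemma~\ref{pochodna}, $F_{j}$ is strictly convex on $\Delta_{q(i,j)}^{j}$ with a unique minimum $m_{j}$, so the sublevel set $\{F_{j}<t-2\varepsilon\}$ is a (possibly empty) open interval $(a,b)$ containing $m_{j}$; the sets $\widetilde{\Delta}_{i,j}^{-}$ and $\widetilde{\Delta}_{i,j}^{+}$ are then intervals lying strictly to the left of $a$ and strictly to the right of $b$, respectively, which gives $\widetilde{\Delta}_{i,j}^{-}\leq\widetilde{\Delta}_{i,j'}$ and $\widetilde{\Delta}_{i,j'}\leq\widetilde{\Delta}_{i,j}^{+}$. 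The middle inequality $\widetilde{\Delta}_{i,j'}^{-}\leq\widetilde{\Delta}_{i,j'}^{+}$ is immediate because $F'_{j'}$ changes sign at the minimum $m_{j'}$ of $F_{j'}$. I expect the delicate point to be the containment step: one must carefully exploit that $\Delta_{i}$ is an interval and is not entirely contained in any ``wrong'' $\mathcal{P}_{j}$-cell, in order to make a cell boundary of $\Delta_{p}^{j}$ accessible within $\Delta_{p}^{j}\cap\Delta_{i}$ and to turn the divergence $f^{(j)}+g^{(j)}\to+\infty$ into the desired contradiction with Lemma~\ref{partycja}.
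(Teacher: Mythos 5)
Your proof is correct, and it runs on the same quantitative engine as the paper's: passing from $j$ to $j'>j$ raises the Birkhoff sum by at least $\min(f+g)>6\varepsilon$, which dominates the $\varepsilon$-sized errors supplied by Lemma~\ref{wn8} and the $2\varepsilon$-widths of the windows. The organization, however, is genuinely different. The paper argues by a direct two-point contradiction: it takes $x\in\widetilde{\Delta}_{i,j}^{-}$ and $x'\in\widetilde{\Delta}_{i,j'}^{-}$ with $x'<x$, squeezes $f^{(j)}(x')+g^{(j)}(x')$ between $t-3\varepsilon$ and $t+3\varepsilon-\min(f+g)$, and splits into two cases according to whether $f^{(j)}+g^{(j)}$ is continuous on $[x',x]$ (monotonicity of $f^{(j)}$ in the first case, Property~2 of Lemma~\ref{partycja} in the second). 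You instead first establish the containment $\widetilde{\Delta}_{i,j'}\subset\Delta_{q(i,j)}^{j}$ and then place $\widetilde{\Delta}_{i,j'}$ inside the sublevel set $\{F_j<t-2\varepsilon\}$ of the convex function $F_j$, which by Lemma~\ref{pochodna} is an interval around the minimum separating $\widetilde{\Delta}_{i,j}^{-}$ from $\widetilde{\Delta}_{i,j}^{+}$. Your containment step plays exactly the role of the paper's second case, and in fact makes explicit what the paper leaves terse there: you use the blow-up of $f^{(j)}$ at the boundary of the ``wrong'' cell $\Delta_p^{j}$, together with the intermediate value theorem, to manufacture a point of $\Delta_{i,j}$ in that cell and so contradict the uniqueness in Property~2, whereas the paper asserts the corresponding lower bound in one line. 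The sublevel-set formulation also yields the whole chain~\eqref{ukl} in a single stroke instead of the ``analogous'' repetitions the paper invokes for the remaining inequalities. Both arguments are sound; yours trades the case split for a cleaner geometric sandwich at the cost of the extra containment step, and it is worth noting (as you implicitly do) that whenever $\Delta_{i,j}$ or $\Delta_{i,j'}$ is empty the relevant sets are empty and the ordering holds vacuously under the convention $A\leq B$.
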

\begin{proof}
We will show that $\widetilde{\Delta}_{i,j}^{-}\leq \widetilde{\Delta}_{i,j'}^{-}$ for $N_0<j<j'\leq j_i$. The proof of the remaining part of the statement is analogous.
Suppose for contradiction that there exist $x\in\widetilde{\Delta}_{i,j}^{-}$, $x'\in\widetilde{\Delta}_{i,j'}^{-}$ such that $x'<x$. Let $y\in{\Delta}_{i,j}\subset\widetilde{\Delta}_{i,j}$, $y'\in{\Delta}_{i,j'}\subset\widetilde{\Delta}_{i,j'}$ be such that $g^{(j)}(y)=g_{i,j}$, $g^{(j')}(y')=g_{i,j'}$ (see Remark~\ref{uwaganiewiemczypotrzebna}). By Lemma~\ref{wn8} and since $j<j'$, we have
\begin{multline*}
f^{(j')}(x')+g^{(j')}(y')
\\
\geq f^{(j')}(x')+g^{(j')}(x')-\varepsilon \geq f^{(j)}(x')+g^{(j)}(x')+\min (f+g)-\varepsilon.
\end{multline*}
Therefore
\begin{multline}\label{p31}
f^{(j)}(x')+g^{(j)}(x')\leq f^{(j')}(x')+g^{(j')}(y')-\min(f+g)+\varepsilon
\\
=\left(f^{(j')}(x')+g_{i,j'}-t \right)+\left(t+\varepsilon-\min(f+g) \right)
\\
<2\varepsilon+ \left(t+\varepsilon-\min(f+g) \right)=t+3\varepsilon-\min(f+g),
\end{multline}
where the right inequality follows from $x' \in \widetilde{\Delta}^{-}_{i,j'}$.
There are two cases: either $f^{(j)}+g^{(j)}$ is continuous on $[x',x]$ or it is not. In the first case we have
\begin{equation}\label{p32}
f^{(j)}(x')+g^{(j)}(x')>f^{(j)}(x)+g^{(j)}(x)-\varepsilon\geq t-3\varepsilon
\end{equation}
(the inequalities follow from the fact that $f^{(j)}$ is decreasing at $x$, so from Lemma~\ref{pochodna} and from Lemma~\ref{wn8} it is decreasing also on $[x',x]$). In the second case by Property 2. in Lemma~\ref{partycja},
\begin{equation}\label{p33}
f^{(j)}(x')+g^{(j)}(x')=f^{(j)}(x')+g_{i,j}+g^{(j)}(x')-g_{i,j} >t-3\varepsilon.
\end{equation}
Hence from~\eqref{p31},~\eqref{p32} and~\eqref{p33} we obtain
\begin{equation*}
t-3\varepsilon<t+3\varepsilon-\min(f+g),
\end{equation*}
which is a clear contradiction with the choice of~$\varepsilon$ (see~\eqref{hutu}, page~\pageref{hutu}).
\end{proof}

\begin{lm}\label{lm9}
For each $\widehat{\eta} >0$, $\varepsilon < \min(\min(f+g),\frac{1}{4}\min f)$ and each $N \in \mathbb{N}$ there exists $T_0>0$ such that for all $t>T_0$ the following inequality holds:
\begin{equation*}
m \left\{x\in[0,1) \colon \left( \exists\ {j\leq N}\right) |f^{(j)}(x)+g^{(j)}(x)-t|<\varepsilon\right\}<\widehat{\eta}.\footnote{Recall that $m$ stands for the Lebesgue measure.}
\end{equation*}
\end{lm}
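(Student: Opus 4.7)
My plan is to reduce the claim to a Markov-type inequality on $[0,1)$, exploiting two structural facts: $g$ is bounded on $[0,1)$, and $f$ is nonnegative and integrable. Since $N$ is fixed, no delicate control of orbits of $T$ is needed --- the estimate will be uniform in $j\le N$ on the basis of $\|f\|_1$ alone.

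First, I would observe that $g=g_1+g_2+g_3$, with $g_1$ absolutely continuous, $g_2$ linear and $g_3$ piecewise constant, is bounded on $[0,1)$; set $M:=N\|g\|_\infty$, so that $|g^{(j)}(x)|\le M$ for every $j\le N$ and every $x\in[0,1)$. Second, I would observe that $f\ge 0$ pointwise: in the formula~\eqref{fczyste} each fractional part lies in $[0,1)$, so each logarithm is nonpositive, while each coefficient $c_i^+$, $c_{i+1}^-$ is nonnegative. The logarithmic singularities of $f$ are integrable, hence $\|f\|_1<\infty$, and therefore $f^{(j)}\ge 0$ with $\|f^{(j)}\|_1\le N\|f\|_1$ for every $j\le N$. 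By Markov's inequality,
\begin{equation*}
m\{x\in[0,1):f^{(j)}(x)>L\}\;\le\;\frac{N\|f\|_1}{L} \quad\text{for every } L>0 \text{ and } j\le N.
\end{equation*}

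If $|f^{(j)}(x)+g^{(j)}(x)-t|<\varepsilon$ then necessarily $f^{(j)}(x)>t-\varepsilon-M$, and the case $j=0$ is vacuous once $t>\varepsilon$, so
\begin{equation*}
\bigl\{x\in[0,1):(\exists\,j\le N)\,|f^{(j)}(x)+g^{(j)}(x)-t|<\varepsilon\bigr\}\;\subset\;\bigcup_{j=1}^{N}\{x:f^{(j)}(x)>t-\varepsilon-M\}.
\end{equation*}
Taking a union bound and applying the Markov estimate with $L=t-\varepsilon-M$, the measure of the set above is at most $N^2\|f\|_1/(t-\varepsilon-M)$ as soon as $t>\varepsilon+M$. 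Choosing $T_0>\varepsilon+M+N^2\|f\|_1/\widehat\eta$ makes this quantity strictly less than $\widehat\eta$ for every $t>T_0$, which is what was required.

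The only point that needs any care is the verification that $f\ge 0$ and $f\in L^1([0,1))$; no property of $T$ beyond Lebesgue measure preservation enters the argument, and the hypotheses on $\varepsilon$ are not used here --- they are in place for uniformity with applications of this lemma elsewhere in the paper.
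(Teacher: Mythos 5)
Your argument is correct, and it is a genuinely different — and considerably more elementary — route than the one taken in the paper. You reduce everything to three soft facts: $f\geq 0$ with $f\in L^1([0,1))$ (each summand in~\eqref{fczyste} is $-c\log u$ with $u\in[0,1)$ and $c\geq 0$, and logarithmic singularities are integrable), $g$ is bounded (each of $g_1,g_2,g_3$ is), and $T$ preserves Lebesgue measure, so $\|f^{(j)}\|_1=j\|f\|_1$. Chebyshev--Markov plus a union bound over the finitely many $j\leq N$ then gives the $O(1/t)$ decay directly, with no use of the standing hypothesis on $\varepsilon$ and, more notably, no use of the balanced-partition-lengths assumption on $T$. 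The paper instead argues geometrically: it locates the level $j_0=j_0(t-G)$, shows via convexity of the $f^{(j)}$ and the ``neighbouring interval'' argument (Figure~\ref{2obok}) that the bad set can meet at most $2Nr$ of the intervals $\Delta_i^{j_0}$, and then uses balanced partition lengths to bound each such interval by $c/j_0\leq c/M$. Both proofs yield bounds of order $1/t$, so quantitatively nothing is lost; what the paper's version buys is that it is phrased entirely in terms of the partition machinery $\Delta_i^{j}$ that the rest of the proof of Theorem~\ref{tw12} runs on, whereas yours is self-contained and would apply verbatim to any measure-preserving base map with a nonnegative integrable $f$ and bounded $g$. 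The one point worth stating explicitly in your write-up is the measure-preservation step $\int_0^1 f(T^kx)\,dx=\int_0^1 f(x)\,dx$ (IETs are piecewise translations), since that is where the only dynamical input enters.
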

\begin{proof}
Fix $\widehat{\eta}>0$, $\varepsilon < \min(\min(f+g),\frac{1}{4}\min f)$ and $N\in\mathbb{N}$. Let
\begin{equation*}
c_j=\max_{0\leq i\leq(r-1)j}\min_{\Delta_{i}^{j}} f^{(j)}-2\varepsilon.
\end{equation*}
Since $f^{(j+1)}\geq f^{(j)}+\min f$, we have $c_{j+1}\geq c_j+\min f$. Put
\begin{equation*}
j_0(t)=\max\{j\in\mathbb{N}\colon c_j<t\}.
\end{equation*}
Let $\delta>0$ be such that for $x,y\in \Delta_i^k$ the condition $|x-y|<\delta$ implies $|g^{(k)}(x)-g^{(k)}(y)|<\varepsilon$ for $k\leq N$ and all $0\leq i \leq(r-1)j$. Set $G=\max(\max g,0)$ and take $M>0$ such that the following holds:
\begin{align}
M&> \frac{2Nrc}{\widehat{\eta}},\label{nr1a}
\\
M&> cN,\label{nr1b}
\\
M&>\frac{c}{\delta},\label{nr1c}
\end{align}
where constant $c>1$ is the same as in the definition of balanced partition lengths. Let $T_0=c_{M+1}+NG$ and fix $t>T_0$. Letting $j_0:=j_0(t-G)$ by~\eqref{nr1a} we obtain
\begin{equation}\label{nr2a}
j_0\geq j_0 (c_{M+1})= M.
\end{equation}
Moreover, by~\eqref{nr1b} we have
\begin{equation}\label{nr2b}
j_0\geq M>cN\geq N.
\end{equation}
We claim that
\begin{equation*}
\left\{x\in\Delta_i^{j_0}\colon \left(\exists\ {j\leq N}\right)\left|f^{(j)}(x)+NG-t\right|<2\varepsilon\right\}=\emptyset
\end{equation*}
for $0\leq i\leq (r-1)j_0$ such that $f^{(j)}$ is continuous on the closure $cl\left(\Delta_i^{j_0}\right)$ of $\Delta_i^{j_0}$. Indeed, suppose that this is not the case and take $\overline{x}\in cl\left(\Delta_i^{j_0}\right)$ such that $\left|f^{(j)}(\overline{x})+NG-t\right|<2\varepsilon$. Without loss of generality, we may assume that $f'^{(j)}(\overline{x})>0$ (otherwise we have $f'^{(j)}(\overline{x})<0$ and instead of looking at $\Delta_{i+1}^{j_0}$ in what follows, we look at $\Delta_{i-1}^{j_0}$). 
\begin{figure}[ht]
\centering
\includegraphics[height=220pt]{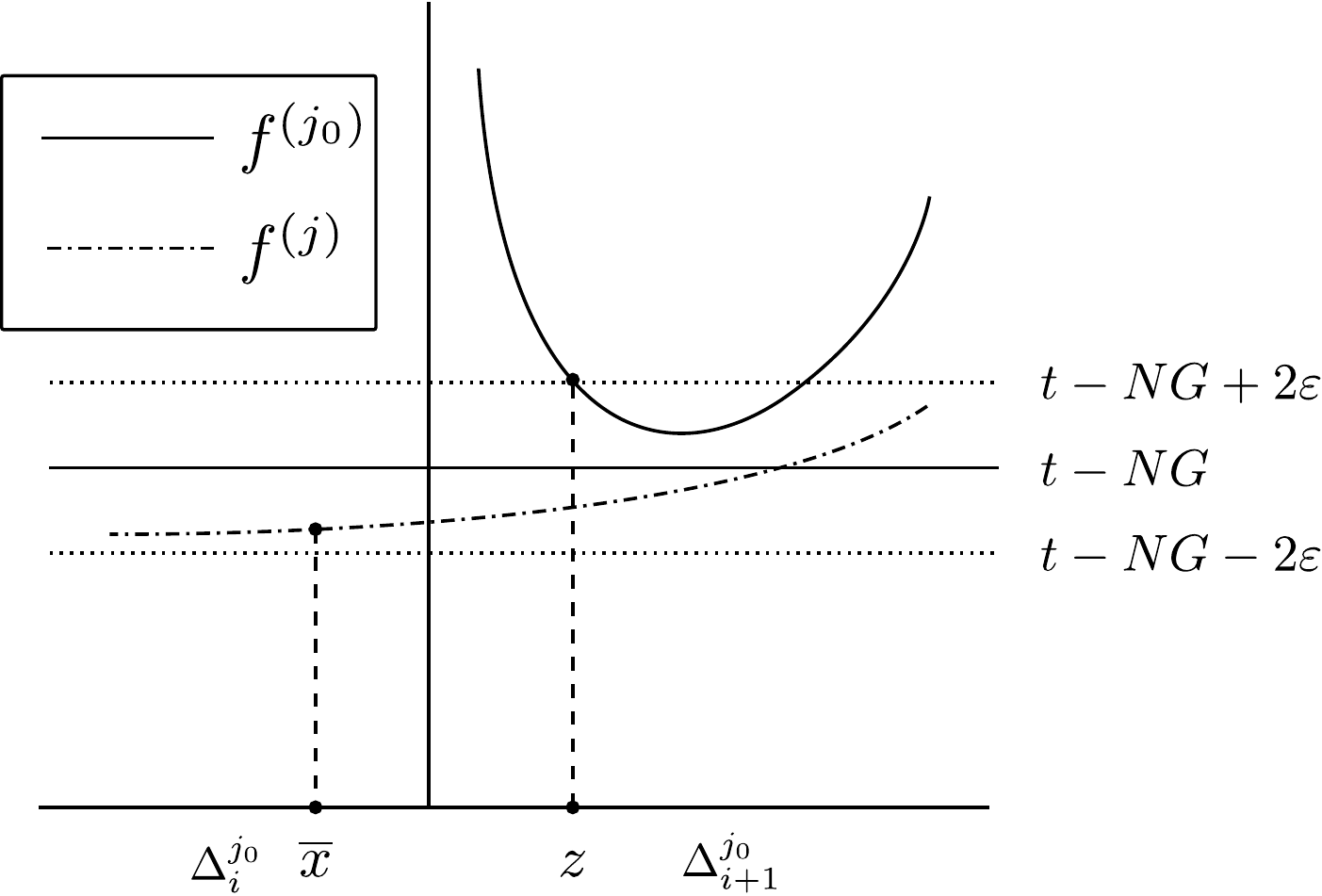}
\caption[Ilustracja dowodu...]{The situation in the intervals $\Delta_i^{j_0}$ i $\Delta_{i+1}^{j_0}$.}
\label{2obok}
\end{figure}
There exists $x\in\Delta_{i+1}^{j_0}$ such that $f'^{(j_0)}(x)<0$ and $\left|f^{(j_0)}(x)+NG-t\right|<2\varepsilon$. Let
\begin{equation*}
z=\inf\left\{x\in\Delta_{i+1}^{j_0}\colon \left|f^{(j_0)}(x)+NG-t\right|<2\varepsilon\right\}.
\end{equation*}
Since $j_0\geq N>j$, we have (see Figure~\ref{2obok})
\begin{equation*}
t-2\varepsilon\leq f^{(j)}(z)+NG\leq f^{(j_0)}(z)+NG-\min f=t+2\varepsilon-\min f,
\end{equation*}
whence $\min f<4\varepsilon$, which is impossible by choice of $\varepsilon$. Since $f^{(N)}$ has $N(r-1)+1\leq Nr$ discontinuities and $f^{(j)}$ for $j<N$ is continuous whenever $f^{(N)}$ is continuous, at most $2Nr$ of the intervals $cl\left(\Delta_i^{j_0}\right)$ have a nonempty intersection with the set 
$$\left\{x\in[0,1)\colon \left(\exists\  {j\leq N}\right)\left|f^{(j)}(x)+NG-t\right|<2\varepsilon\right\}.$$
Hence, by~\eqref{nr2a} and~\eqref{nr1a} and using the assumption that the considered IET has balanced partition lengths, we obtain
\begin{multline}\label{krka}
m\left\{x\in[0,1)\colon \left(\exists\ {j\leq N}\right) \left|f^{(j)}(x)+NG-t\right|<2\varepsilon\right\}\leq 2Nr \max_{0\leq i\leq (r-1)j_0}m(\Delta_i^{j_0})
\\
\leq 2Nr\frac{c}{j_0}\leq 2Nr\frac{c}{M}<\widehat{\eta}.
\end{multline}
\begin{figure}[ht]
\centering
\includegraphics[height=220pt]{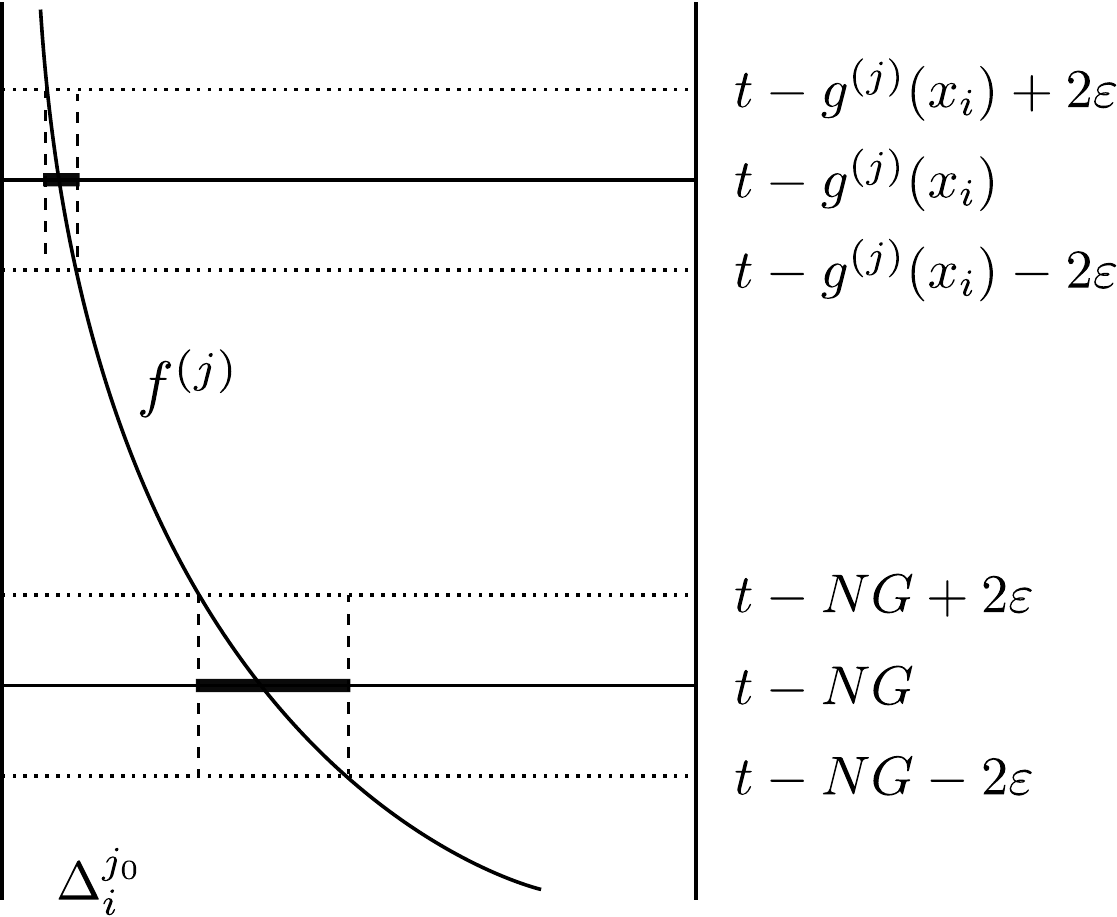}
\caption{The situation in $\Delta_i^{j_0}$}
\label{wypuklosc}
\end{figure}
For $1\leq i\leq (r-1)j_0$ pick $x_i\in \Delta_i^{j_0}$. Since $g^{(j)}(x_i)\leq NG$ for $j\leq N$, by convexity of $f^{(j)}$ we have
\begin{multline}\label{lm91g}
m\left\{x\in\Delta_i^{j_0}\colon \left|f^{(j)}(x)+NG-t\right|<2\varepsilon\right\}
\\
\geq m \left\{x\in\Delta_i^{j_0}\colon \left|f^{(j)}(x)+g^{(j)}(x_i)-t\right|<2\varepsilon\right\},
\end{multline}
where $x_i\in\Delta_i^{j_0}$ (see Figure~\ref{wypuklosc}) and
\begin{multline}\label{lm92g}
\left\{x\in\Delta_i^{j_0}\colon \left|f^{(j)}(x)+g^{(j)}(x)-t\right|<\varepsilon\right\}
\\
\subset \left\{x\in\Delta_i^{j_0}\colon \left|f^{(j)}(x)+g^{(j)}(x_i)-t\right|<2\varepsilon\right\}.
\end{multline}
Indeed, to justify~\eqref{lm92g} notice that by~\eqref{nr2a} and~\eqref{nr1c} we have
\begin{equation*}
m(\Delta_i^{j_0})\leq \frac{c}{j_0}\leq \frac{c}{M}<\delta,
\end{equation*}
so for $x\in\Delta_i^{j_0}$ it holds $\left|g^{(j)}(x)-g^{(j)}(x_i)\right|<\varepsilon$ and
\begin{equation*}
f^{(j)}(x)+g^{(j)}(x_i)=f^{(j)}(x)+g^{(j)}(x)-g^{(j)}(x)+g^{(j)}(x_i)\in(t-2\varepsilon,t+2\varepsilon),
\end{equation*}
provided that $f^{(j)}(x)+g^{(j)}(x)\in(t-\varepsilon,t+\varepsilon)$. Therefore, by~\eqref{lm92g} and~\eqref{lm91g}
\begin{multline*}
m\left\{x\in\Delta_i^{j_0}\colon \left|f^{(j)}(x)+g^{(j)}(x)-t\right|<\varepsilon\right\}
\\
\leq m\left\{x\in\Delta_i^{j_0}\colon \left|f^{(j)}(x)+NG-t\right|<2\varepsilon\right\}.
\end{multline*}
Hence
\begin{multline}\label{odin}
m\left\{x\in[0,1)\colon \left(\exists\ {j\leq N}\right) \left|f^{(j)}(x)+g^{(j)}(x)-t \right|<\varepsilon \right\}
\\
\leq \sum_{i} \sum_{j\leq N} m \left\{ x\in \Delta_i^{j_0}\colon \left|f^{(j)}(x)+g^{(j)}(x)-t \right|<\varepsilon\right\}
\\
\leq \sum_{i} \sum_{j \leq N} m\left\{ x\in \Delta_i^{j_0}\colon \left| f^{(j)}(x)+NG-t \right|<2\varepsilon\right\}.
\end{multline}
Notice that from $\varepsilon<\frac{1}{4}\min f$ it follows that the sets $\left\{x\in[0,1)\colon \left|f^{(j)}(x)+NG-t\right|<2\varepsilon\right\}$ are pairwise disjoint for $j\in\mathbb{N}$ (so in particular for $j\leq N$), whence
\begin{multline}\label{dba}
\sum_{i} \sum_{j \leq N} m\left\{ x\in \Delta_i^{j_0}\colon \left| f^{(j)}(x)+NG-t \right|<2\varepsilon\right\}
\\
=m\left(\bigcup_{i} \bigcup_{j \leq N} \{x\in\Delta_i^{j_0} \colon \left| f^{(j)}(x)+NG-t\right|<2\varepsilon \}\right)
\\
=m \left\{x\in [0,1) \colon \left(\exists\ {j \leq N}\right) \left| f^{(j)}(x)+NG-t\right|<2\varepsilon\right\}.
\end{multline}
The assertion follows from~\eqref{odin},~\eqref{dba} and~\eqref{krka}.
\end{proof}


\begin{proof}[Proof of Theorem~\ref{tw12}]
We claim that for any $\overline{\eta}\in(0,1)$ there exist $\varepsilon>0$ and $t_0>0$ such that 
\begin{equation}\label{klejm}
m\left(\{x\in[0,1)\colon \left(\exists\ {j\in\mathbb{N}}\right)\left|f^{(j)}(x)+g^{(j)}(x)-t\right|<\varepsilon\right\}\leq \overline{\eta}
\end{equation}
for $t>t_0$. This ensures that for any sequence $t_n\to \infty$ there exists $n_0\in\mathbb{N}$ such that for $n>n_0$
\begin{equation*}
m\left\{x\in[0,1)\colon \left(\exists\ {j\in\mathbb{N}}\right)\left|f^{(j)}(x)+g^{(j)}(x)-t_n\right|<\varepsilon\right\}\leq \overline{\eta},
\end{equation*}
which implies 
\begin{equation*}
\liminf_{n\to\infty} m \left\{x\in[0,1)\colon \left(\exists\ {j\in\mathbb{N}}\right) \left|f^{(j)}(x)+g^{(j)}(x)-t_n\right|<\varepsilon \right\}\leq \overline{\eta}.
\end{equation*}
By Lemma~\ref{lm5}, this means that the special flow $T^{f+g}$ is not partially rigid along any sequence $\{t_n\}$. Therefore, we are left to prove the claim~\eqref{klejm}.

Fix $\overline{\eta}\in(0,1)$ and take $\overline{\eta}>\widetilde{\eta}>0$, $\widehat{\eta}$ and $K\in\mathbb{N}$ such that
\begin{equation}\label{ety}
\widetilde{\eta}+\widehat{\eta}+\frac{1}{K}<\overline{\eta}.
\end{equation} 
It follows from Lemma~\ref{lm3} and from the left inequality in~\eqref{bpl} that condition~\eqref{six} in Lemma~\ref{lm4} holds for $\mathcal{H}=\{f^{(j)}\colon \Delta_i^j\to\mathbb{R}\colon j\geq 6c^2,\ 0\leq i\leq (r-1)j\}$ and $\widetilde{\delta}=\frac{\delta}{c}$. Let $\varepsilon$ be as in the assertion of Lemma~\ref{lm4}, making it smaller if necessary, such that
\begin{equation}\label{eps}
\varepsilon<\frac{\min (f+g)}{2K}
\end{equation}
and
\begin{equation*}
\varepsilon<\frac{1}{6}\min(f+g).
\end{equation*}
Then, by Lemma~\ref{lm4}, for $j\geq 6c^2$ and $0\leq i\leq(r-1)j$ we have
\begin{equation}\label{smile}
m\left\{x\in\Delta_i^j\colon \left|f^{(j)}(x)-s\right|<2\varepsilon\right\}\leq \widetilde{\eta}d_i^j
\end{equation}
for all $s>0$. Let $N_0\in\mathbb{N}$ be as in the assertion of Lemma~\ref{wn8}. Put
\begin{equation}\label{kropa}
t_0=\max_{0\leq i\leq(r-1)\lceil 6c^2 \rceil} \{\min_{\Delta_{i}^{\lceil 6c^2 \rceil}} (f^{(\lceil 6c^2\rceil)}+g^{(\lceil 6c^2\rceil)}-\varepsilon)\},
\end{equation}
where $\lceil \cdot \rceil$ stands for the ceiling function and fix $t>t_0$. 
Consider the partition of $[0,1)$ into subintervals $\Delta_i=\Delta_i(f,g,t,\varepsilon)$, where $1\leq i\leq n$, described previously. 
By Lemma~\ref{lm4.7} we have
\begin{equation*}
\widetilde{\Delta}_{i,j}^{-}\leq \widetilde{\Delta}_{i,j'}^{-}\leq\widetilde{\Delta}_{i,j'}^{+}\leq \widetilde{\Delta}_{i,j}^{+}
\end{equation*}
\begin{figure}[ht]
\centering
\includegraphics[height=220pt]{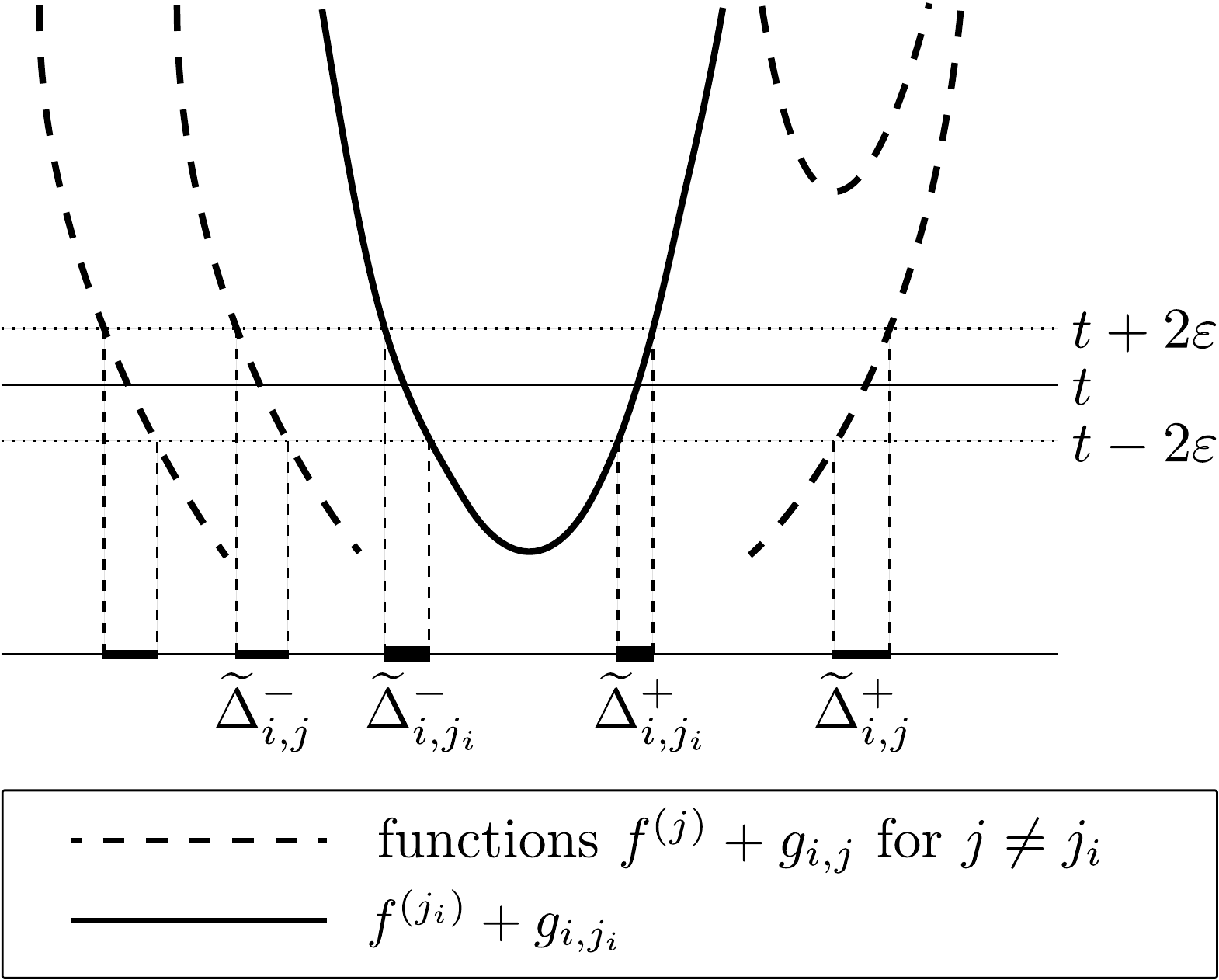}
\caption{Pairwise disjoint sets $\widetilde{\Delta}_{i,j}^{-}$, $\widetilde{\Delta}_{i,j}^{+}$}
\label{uklad}
\end{figure}
for all $N_0<j<j'\leq j_i$ and $1\leq i\leq n$ (see Fig.~\ref{uklad}). We will show that
\begin{equation}\label{12g1}
j_i\geq 6c^2\text{ for }1\leq i\leq n.
\end{equation}
Indeed, notice that from~\eqref{kropa} it follows that for all $0\leq i\leq (r-1)\lceil 6c^2 \rceil$ 
\begin{equation*}
t>t_0\geq \min_{\Delta_{i}^{\lceil 6c^2 \rceil}} (f^{(\lceil 6c^2\rceil)}+g^{(\lceil 6c^2\rceil)}-\varepsilon),
\end{equation*}
whence there exist $x_i\in\Delta_{i}^{\lceil 6c^2 \rceil}$ such that
\begin{equation*}
f^{(\lceil 6c^2\rceil)}(x_i)+g^{(\lceil 6c^2\rceil)}(x_i)<t+\varepsilon.
\end{equation*}
This, together with Lemma~\ref{pochodna}, implies that there exist $x'_i\in\Delta_i^{\lceil 6c^2 \rceil}$ satisfying
\begin{equation}\label{12g2}
\left|f^{(\lceil 6c^2 \rceil)}(x'_i)+g^{(\lceil 6c^2 \rceil)}(x'_i)-t\right|<\varepsilon.
\end{equation}
Hence, for $0\leq i\leq (r-1)\lceil 6c^2 \rceil$
\begin{equation}\label{12g22}
\max\{j\in\mathbb{N}\colon \left(\exists\ {x\in\Delta_i^{\lceil 6c^2\rceil}}\right) \left|f^{(j)}(x)+g^{(j)}(x)-t\right|<\varepsilon\}\geq \lceil 6c^2 \rceil.
\end{equation}
Suppose that for some $1\leq i_0\leq n$ we have $j_{i_0}<6c^2\leq \lceil 6c^2 \rceil$. Then each interval $\Delta_{q}^{j_{i_0}}$ ($0\leq q\leq (r-1)j_{i_0}$) consists of a finite number of intervals of the form $\Delta_i^{\lceil 6c^2 \rceil}$. Hence~\eqref{12g22} is contradictory to the definition of $j_{i_0}$ (recall that $\Delta_{i_0}$ is a union of intervals of the form $\Delta_q^{j_{i_0}}$) and~\eqref{12g1} has been shown.

Consider first the case where $j_i>N_0$ for all $1\leq i\leq n$. We claim that the following three inequalities hold:
\begin{align}
m(\Delta_{i,j_i})&\leq \widetilde{\eta}d_i,\label{12a}
\\
m\left(\bigcup_{N_0<j<j_i}\Delta_{i,j}\right)&\leq\frac{1}{K}d_i,\label{12b}
\\
m\left(\bigcup_{1\leq i\leq n}\bigcup_{j\leq N_0}\right)\Delta_{i,j}&\leq \widehat{\eta}.\label{12c}
\end{align}
For $N<j\leq j_i$ such that $\Delta_{i,j}\neq\emptyset$ let $g_{i,j}=g^{(j)}(x)$ for some $x\in\Delta_{i,j}$, as in the beginning of the proof of Lemma~\ref{lm4.7}. 
Using Lemma~\ref{partycja}, choose $\Delta=\Delta_q^{j_i}\cap\Delta_i$ so that $\Delta_{i,j_i}=\Delta_q^{j_i}\cap\Delta_{i,j_i}$. Notice that
\begin{equation*}
g_{i,j_i}+\min_{\Delta}f^{(j_i)}<t+\varepsilon.
\end{equation*}
Therefore
\begin{equation*}
t-g_{i,j_i}\geq \min_{\Delta}f^{(j_i)}-\varepsilon>0.
\end{equation*}
Similarly, $t-g_{i,j}>0$ for $N_0<j<j_i$. Hence~\eqref{12g1} implies that
\begin{equation}\label{12g3}
m\left(\widetilde{\Delta}_{i,j_i}\right)\leq \widetilde{\eta}d_i.
\end{equation}
By Lemma~\ref{wn8}, for $x,y\in\Delta$ it holds that
\begin{equation*}
\left|g^{(j_i)}(x)-g^{(j_i)}(y)\right|<\varepsilon.
\end{equation*}
Therefore $\Delta_{i,j_i}\subset \widetilde{\Delta}_{i,j_i}$. Indeed, if $x\in\Delta_{i,j_i}$, then 
\begin{equation*}
f^{(j_i)}(x)+g_{i,j_i}=f^{(j_i)}(x)+g^{(j_i)}(x)+g_{i,j_i}-g^{(j_i)}(x)\in (t-2\varepsilon,t+2\varepsilon).
\end{equation*}
Hence and by~\eqref{12g3} we have shown that~\eqref{12a} is true.

Now we will prove that~\eqref{12b} also holds. As before, for $N_0<j<j_i$ we have $\Delta_{i,j}\subset \widetilde{\Delta}_{i,j}$. Therefore it suffices to prove that
\begin{equation}\label{12b'}
m(\cup_{N_0<j<j_i}\widetilde{\Delta}_{i,j})\leq\frac{1}{K}d_i.
\end{equation}
We will use Lemma~\ref{lm4.7}. Notice that by Lemma~\ref{pochodna}, $f^{(j)}$ is convex on each interval where it is continuous. Therefore for $N_0<j<j_i$ such that $\widetilde{\Delta}^{-}_{i,j}\neq\emptyset$ by mean value theorem we have
\begin{equation*}
\frac{m(\widetilde{\Delta}_{i,j}^{-})}{x_{j+1}-x_j}\leq\frac{2\varepsilon}{f^{(j+1)}(x_{j+1})+g_{i,j+1}-f^{(j)}(x_{j+1})-g_{i,j}},
\end{equation*}
where $x_j=\inf\widetilde{\Delta}_{i,j}^{-}$ for $N_0<j\leq j_i$ (see Fig.~\ref{ka}). 
\begin{figure}[ht]
\centering
\includegraphics[height=200pt]{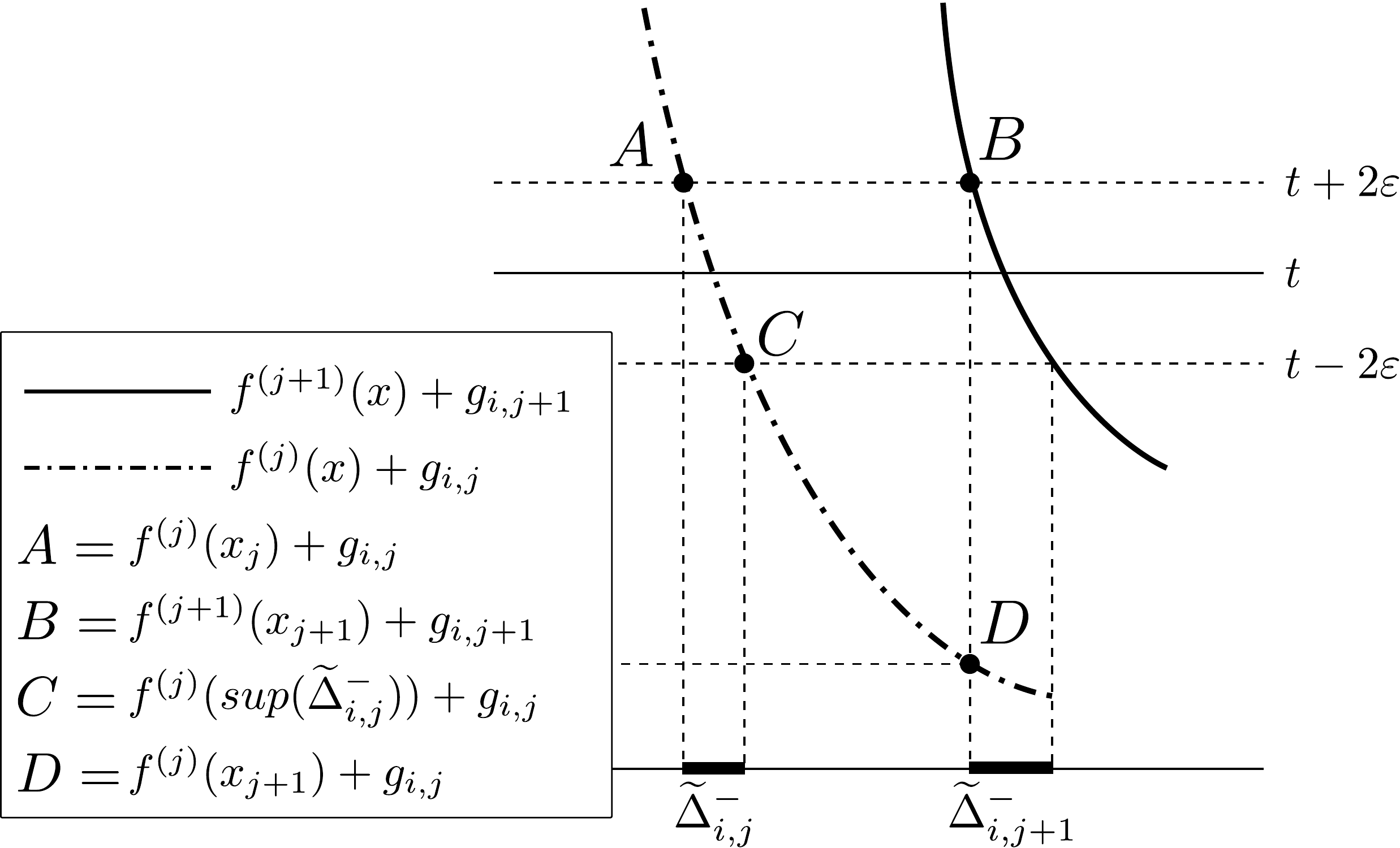}
\caption{A situation in the proof of Theorem~\ref{tw12}}
\label{ka}
\end{figure}
Now we estimate the denominator from below. It follows from Lemma~\ref{wn8} that
\begin{multline*}
f^{(j+1)}(x_{j+1})+g_{i,j+1}-f^{(j)}(x_{j+1})-g_{i,j}
\\
=f^{(j+1)}(x_{j+1})+g^{(j+1)}(x_{j+1})-(f^{(j)}(x_{j+1})+g^{(j)}(x_{j+1}))
\\
+g_{i,j+1}-g^{(j+1)}(x_{j+1})+g^{(j)}(x_{j+1})-g_{i,j}\geq \min(f+g)-2\varepsilon.
\end{multline*}
Therefore by~\eqref{eps} for $N_0<j<j_i$ we have 
\begin{equation*}
m\left(\widetilde{\Delta}_{i,j}^{-}\right)\leq\frac{2\varepsilon}{\min (f+g)}(x_{j+1}-x_j)<\frac{1}{K}\left(x_{j+1}-x_j\right),
\end{equation*}
whence (by adding the inequalities for $N_0<j<j_i$ and by $\inf\Delta_i<x_j$  for $N<j\leq j_i$ such that $\widetilde{\Delta}_{i,j}\neq\emptyset$)
\begin{equation*}
m(\cup_{N_0<j<j_i}\widetilde{\Delta}_{i,j}^{-})<\frac{1}{K}(x_{j_i}-\inf\Delta_i).
\end{equation*}
In the same way,
\begin{equation*}
m(\cup_{N_0<j<j_i}\widetilde{\Delta}_{i,j}^{+})<\frac{1}{K}(\sup\Delta_i-\overline{x}_{j_i}),
\end{equation*}
where $\overline{x}_{j_i}=\sup\widetilde{\Delta}_{i,j_i}^{+}$. Hence
\begin{equation*}
m(\cup_{N_0<j<j_i}\widetilde{\Delta}_{i,j})<\frac{1}{K}d_i+\frac{1}{K}(x_{j_i}-\overline{x}_{j_i})<\frac{1}{K}d_i,
\end{equation*}
i.e.~\eqref{12b'} holds and so does~\eqref{12b}. By Lemma~\ref{lm9}, \eqref{12c} is also true. Therefore by~\eqref{ety}
\begin{equation*}
m(\cup_{1\leq i\leq n}\cup_{j\in\mathbb{N}}\Delta_{i,j})<\widetilde{\eta}+\frac{1}{K}+\widehat{\eta}<\overline{\eta}.
\end{equation*}

If for some $1\leq i\leq n$ we have $j_i\leq N_0$, then
\begin{equation*}
\Delta_{i,j_i}\subset \cup_{1\leq i\leq n}\cup_{j\leq N_0}\Delta_{i,j},
\end{equation*}
we obtain the same result and so the claim follows.
\end{proof}

\section{IETs with balanced partition lengths}\label{balparle}

Let $T\colon [0,1)\to[0,1)$ be an irrational rotation on the circle $\mathbb{T}=\mathbb{R}/\mathbb{Z}$. It is well-known that a necessary and sufficient condition for $\alpha$ to have bounded partial quotients is that the rotation by $\alpha$ on the circle has balanced partition lengths. The main concern in this section is with giving more examples of interval exchange transformations with balanced partition lengths. In particular, we show that every IET which is of periodic type has balanced partition lengths.

\begin{uw}\label{uwag}
Let $x_0<x_1<\dots<x_{k-1}<x_k$ and consider $\mathcal{P}=\mathcal{P}(x_0;x_1,\dots,x_{k-1};x_k)$. Suppose that
\begin{equation*}
\frac{1}{c}\leq \frac{x_{i+1}-x_i}{x_{j+1}-x_j}\leq c
\end{equation*}
for some $c\geq 1$ for all $0\leq i,j\leq k-1$. Then for $0\leq j\leq k-1$ we have
\begin{equation*}
\frac{k}{c}\leq \sum_{i=0}^{k-1}\frac{x_{i+1}-x_i}{x_{j+1}-x_j}\leq kc,
\end{equation*}
which is equivalent to 
\begin{equation*}
\frac{k}{c}\leq \frac{x_k-x_0}{x_{j+1}-x_j}\leq kc.
\end{equation*}
Hence
\begin{equation*}
(x_k-x_0)\frac{1}{ck}\leq x_{j+1}-x_j \leq (x_k-x_0)\frac{c}{k}
\end{equation*}
for all $0\leq j\leq k-1$ and
\begin{equation*}
(x_k-x_0)\frac{1}{ck}\leq \min\mathcal{P}\leq\max\mathcal{P}\leq (x_k-x_0)\frac{c}{k}.
\end{equation*}
\end{uw}

\begin{lm}\label{kazdypjestb}
Every IET of periodic type has balanced partition lengths.
\end{lm}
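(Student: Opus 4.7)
The plan is to combine the periodic structure with Perron--Frobenius to balance both tower heights and widths uniformly in the Rauzy step $n$, and then transfer that balance to the partitions $\mathcal{P}_j$ through the tower-floor decomposition of $[0,1)$.

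Let $B = A^{(p)}(T)$ with Perron eigenvalue $\rho > 1$. Inequality~\eqref{balanced} taken with $m = p$ already gives a uniform bound on $h_i^{(n)}/h_j^{(n)}$ for every $n$ divisible by $p$, and intercalating via $h^{(mp+k)} = A^{(k)}(T)\cdot h^{(mp)}$ for $0 \leq k < p$ extends the bound to all $n$. For widths I would exploit the dual cocycle $\underline{\lambda}^{(n+p)} = (B^T)^{-1}\underline{\lambda}^{(n)}$: the requirement that $\underline{\lambda}^{(n)}$ be strictly positive for every $n$ forces $\underline{\lambda}^{(0)}$ to lie on the Perron ray of $B^T$, since any component along a non-dominant eigendirection would grow in modulus strictly faster than the Perron one under $(B^T)^{-1}$ and would eventually drag $\underline{\lambda}^{(np)}$ out of the positive orthant. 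Consequently $\underline{\lambda}^{(mp+k)} = \rho^{-m}\underline{\lambda}^{(k)}$, and the widths are also uniformly balanced. These two facts together say that the tower-floor partition at step $n$, consisting of $K_n := \sum_l h_l^{(n)}$ intervals with each floor of tower $l$ of length $\lambda_l^{(n)}$, has all floor lengths mutually comparable; using $\sum_l h_l^{(n)}\lambda_l^{(n)} = 1$ each such length is of order $1/K_n$, and $K_n$ grows multiplicatively in $n$.

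For part (i) of Definition~\ref{bal}, given $j$ I would choose $n = n(j)$ with $h_{\min}^{(n)} \leq j \leq h_{\max}^{(n)}$, which is possible by the multiplicative growth of $h_{\min}^{(n)}$ and the uniform balance of heights. Since the endpoints of the tower-floor partition at step $n$ are precisely the iterates $T^{-m}\beta_i$ with $1 \leq i \leq r-1$ and $0 \leq m < h_{\max}^{(n)}$, $\mathcal{P}_j$ is sandwiched (up to bounded refinement) between the tower-floor partitions at steps $n$ and $n+p$, both of which have all intervals of length of order $1/K_n$, i.e.\ of order $1/j$. For part (ii) the same strategy applies to a single discontinuity: by unique ergodicity of $\mathcal{R}^nT$ (Corollary~\ref{bosher}) together with the balance of widths, the $j$ points $\{T^{-k+l}\beta_i : 0 \leq l \leq j-1\}$ visit each tower $l$ at step $n$ a number of times of order $j\lambda_l^{(n)}$, hence of order $j/K_n$, and consecutive visits within any single tower are spatially separated by gaps of order $\lambda_l^{(n)}$, i.e.\ of order $1/j$.

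The main obstacle I anticipate is making the sandwiching in part (i) fully rigorous with a constant depending only on $\overline{\nu}(B)$ and the period $p$; the boundary effects for a single orbit in part (ii) may additionally require treating the first and last partial tower descents separately, but the overall estimate still reduces to the uniform balance of tower heights and widths established at the start.
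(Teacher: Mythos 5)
Your proposal follows essentially the same route as the paper's proof: periodicity plus Perron--Frobenius gives uniform balance of tower heights and widths (the paper's scaling factor $\rho$ and inequality~\eqref{balanced} encode exactly your eigenvector observation), and $\mathcal{P}_j$ is then sandwiched between two tower-floor partitions at comparable Rauzy steps, with the single-discontinuity case (ii) handled by the same counting applied to forward and backward iterates. The only quantitative adjustment the paper makes to your sketch is that the two sandwiching steps must be separated by a fixed number $M_1$ of periods rather than one, chosen so that $(K_1+1)h^{((M+1)p)}_{\max}\leq h^{((M+M_1)p)}_{\min}$, where $K_1$ bounds the entrance times of the discontinuities into the base interval --- this absorbs exactly the boundary effects you flag as the main obstacle.
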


Before we prove the above lemma, let us recall some notation from Section~\ref{defin}.
Recall that $A\colon \mathbf{\Delta} \to SL(r,\mathbb{Z})$ stands for the Rauzy cocycle, $\mathcal{R}$ stands for the Rauzy induction map and $\mathcal{R}^{n}(T)\colon I^{(n)}\to I^{(n)}$ for $n\geq 0$. Recall that for an IET $T$ of periodic type the sequence $A(T),\ A(\mathcal{R}T), \dots, A(\mathcal{R}^nT)$ is periodic with some period $p>0$ and the period matrix $A^{(p)}(T)$ has strictly positive entries. For a matrix $B\in SL(r,\mathbb{Z})$ with strictly positive entries recall that
\begin{equation*}
\overline{\nu}(B)=\max_{i,j,l}\frac{B_{ij}}{B_{lj}}.
\end{equation*}
We will also need inequalities~\eqref{balanced}, i.e.
\begin{equation*}
\frac{1}{\overline{\nu}(B)}\leq \frac{h_i^{(n+m)}}{h_j^{(n+m)}} \leq \overline{\nu}(B)
\end{equation*}
which hold whenever $h^{(m+n)}=B\cdot h^{(n)}$.

\begin{proof}[Proof of Lemma~\ref{kazdypjestb}]
Suppose that $T$ is of periodic type. Let $p$ be a period of the Rauzy matrices such that $A^p(T)$ has only strictly positive entries. Denote the period matrix $A^{(p)}(T)$ by $B$. Put 
$\rho=\left|(B^T)^{-1}\underline{\lambda}^{(0)}\right|$. We will prove now that condition (i) of Definition~\ref{bal} is fulfilled. Note that IETs of periodic type automatically satisfy the IDOC since Rauzy induction is well-defined for all steps. Therefore they are also minimal and we can choose $K_1\in \mathbb{N}$ such that for $1\leq i\leq r$ there exist $1\leq k_i \leq K_1$ satisfying
\begin{equation}\label{kai}
T^{-k_i}\beta_{i} \in I_{i}^{(0)},
\end{equation}
where $I_{i}^{(0)}$ is the leftmost interval exchanged by $T$.

We will show that there exists $M_1\in\mathbb{N}$ such that
\begin{equation}\label{wysok}
(K_1+1)h^{((M+1)p)}_{\max}\leq h^{((M+M_1)p)}_{\min}
\end{equation}
for every $M \in \mathbb{N}$.
Let $M_1\in\mathbb{N}$ satisfy
\begin{equation}\label{M0}
\frac{K_1+1}{r^{M_1-1}}\overline{\nu}(B) \leq 1.
\end{equation}
Since $h^{((M+1)p+p)}=B\cdot h^{((M+1)p)}$, for every $1\leq i\leq r$,
\begin{equation}\label{De}
h^{((M+1)p+p)}_i \geq r h^{((M+1)p)}_{\min}.
\end{equation}
Moreover, from~\eqref{balanced} we have
\begin{equation}\label{row}
h^{((M+1)p)}_{\max}\leq \overline{\nu}(B) h^{((M+1)p)}_{\min}.
\end{equation}
Therefore
\begin{multline}\label{dluga}
(K_1+1)h^{((M+1)p)}_{\max} \leq (K_1+1)\overline{\nu}(B)h^{((M+1)p)}_{\min} 
\\
\leq (K_1+1)\frac{1}{r^{M_1-1}}\overline{\nu}(B) h^{((M+M_1)p)}_{\min}\leq h_{\min}^{((M+M_1)p)}
\end{multline}
where the left inequality follows from~\eqref{row}, the middle one is obtained by iterating~\eqref{De} $M_1-1$ times and the right one is a consequence of~\eqref{M0}.
This implies~\eqref{wysok}.

Fix $j\in\mathbb{N}$. Let $M \in \mathbb{N}$ satisfy
\begin{equation}\label{zarowka}
(K_1+1)h^{(Mp)}_{\max}\leq j < (K_1+1)h^{((M+1)p)}_{\max}.
\end{equation}
From~\eqref{wysok} we have
\begin{equation}\label{tri}
j \leq h^{((M+M_1)p)}_{\min}.
\end{equation}

Now we will obtain a lower bound for $\min\mathcal{P}_j$. Cut the towers for $\mathcal{R}^{((M+M_1)p)}$ at the points $\rho^{M+M_1} \cdot T^{-1}\beta_i$ ($1\leq i\leq r-1$). Let $\mathcal{P}$ stand for the partition of the interval $[0,1)$ after cutting the towers. We claim that now the set of the partition points of $\mathcal{P}$ includes the set $\{T^{-s}\beta_i\colon 0\leq s\leq j-1,\ 1\leq i \leq r-1\}$. Indeed, the discontinuity points for the induced IET $\mathcal{R}^{((M+M_1)p)}(T)$ are the first iterations of the initial discontinuity points $\beta_i$ ($1\leq i \leq r-1$) via $T^{-1}$ which are in $I^{((M+M_1)p)}$. This means that the points $\beta_i \colon 1\leq i\leq r-1$ belong to the set of the left ends of the floors of the towers. Otherwise, after some iterations via $T^{-1}$ we would get that the discontinuity points of the new transformation $\mathcal{R}^{((M+M_1)p)}(T)$ are \emph{inside} the intervals exchanged by it, which is impossible. In view of the inequality~\eqref{tri} we obtain
\begin{equation*}
\#\left(\left\{T^{-s}\beta_i\colon 0\leq s\leq j-1\right\}\cap I^{\left(\left(M+M_1\right)p\right)}\right)\leq 1
\end{equation*}
for every $1\leq i \leq r$.
Therefore the partition $\mathcal{P}$ is finer than $\mathcal{P}_j$ and
\begin{equation*}
\min\mathcal{P} \leq \min\mathcal{P}_j.
\end{equation*}
We will now estimate $\min\mathcal{P}$ from below. Let $c_2>1$ be such that
\begin{equation*}
\frac{1}{c_2 r} \leq \min\mathcal{P}_2
\end{equation*}
($\mathcal{P}_2$ is the same as $\mathcal{P}_j$ for $j=2$).
Hence, by the definition of $\mathcal{P}$
\begin{equation*}
\min\mathcal{P} = \rho^{M+M_1}\min\mathcal{P}_2 \geq \rho^{M+M_1} \frac{1}{c_2r}.
\end{equation*}
This is therefore also the lower bound which we were looking for:
\begin{equation}\label{est1}
\min\mathcal{P}_j\geq \rho^{M+M_1} \frac{1}{c_2r}.
\end{equation}

Now we will estimate $\max\mathcal{P}_j$ from above. Consider the towers for $\mathcal{R}^{(Mp)}(T)$. From the left inequality in~\eqref{zarowka} and the definition of $K_1$, in each floor of each tower there is at least one partition point of $\mathcal{P}_j$. Therefore from the definition of $\rho$
\begin{equation*}
\max\mathcal{P}_j\leq 2 \rho^{M}\max (\mathcal{P}_1).
\end{equation*}
Let $c_{1}>1$ satisfy
\begin{equation}\label{c1}
\max (\mathcal{P}_{1}) \leq \frac{c_{1}}{r}.
\end{equation}
Hence
\begin{equation}\label{est2}
\max{\mathcal{P}_j}\leq 2 \rho^M \max\mathcal{P}_{1} \leq 2\rho^M \frac{c_1}{r}.
\end{equation}
Combining~\eqref{est1} and~\eqref{est2} we obtain
\begin{equation}\label{wspolm}
\rho^{M+M_1}\frac{1}{c_2r}\leq \min\mathcal{P}_j\leq \max\mathcal{P}_j \leq 2\rho^{M}\frac{c_1}{r}.
\end{equation}
Therefore the ratio of the lengths of the intervals of the partition $\mathcal{P}_j$ is between
\begin{equation*}
2\rho^M\frac{c_1}{r}\cdot\frac{c_2r}{\rho^{M+M_1}}=\frac{2c_1c_2}{\rho^{M_1}} \textrm{ and } \frac{\rho^{M_1}}{2c_1c_2}.
\end{equation*}
Hence, as $\mathcal{P}_j$ is a partition into $(r-1)j+1$ subintervals, $(r-1)j+1<rj$ and $\frac{1}{(r-1)j+1}>\frac{r}{j}$, in view of Remark~\ref{uwag} we have
\begin{equation*}
\frac{1}{j}\cdot \frac{2c_1c_2}{r\rho^{M_1}}\leq \min\mathcal{P}_j\leq \max\mathcal{P}_j<\frac{1}{j}\cdot\frac{r\rho^{M_1}}{2c_1c_2}.
\end{equation*}
Thus, we have proved that IETs of periodic type fulfill condition (i) of Definition~\ref{bal}. 

The proof of condition (ii) is similar. We use the notation introduced in the first part of the proof. Fix $0\leq i_0\leq r-1$ and let $K_2$ be a natural number such that for $1\leq i\leq r$ there exist $1\leq k_{i}^{-},k_{i}^{+}\leq K_2$ such that
\begin{equation*}
T^{-k_{i}^{-}}\beta_{i_0}\in I_i^{(0)} \text{ and } T^{k_{i}^{+}}\beta_{i_0}\in I_i^{(0)}
\end{equation*}
(such a number $K_2$ exists from the minimality\footnote{Since $T$ is of periodic type, all steps of Rauzy induction are well-defined. Therefore $T$ satisfies IDOC, whence also $T^{-1}$ satisfies IDOC. This implies minimality of $T^{-1}$.}). Fix $j\geq 1$ and $0\leq k_0\leq j-1$.
As in the first part of the proof, there exists $M_2\in\mathbb{N}$ such that for every 
$M\in\mathbb{N}$
\begin{equation}\label{wysok1}
\left(K_2+1\right)h_{\max}^{\left(\left(M+1\right)p\right)}\leq \frac{1}{4}h_{\min}^{((M+M_2)p)}.
\end{equation}
Let $M\in\mathbb{N}$ satisfy
\begin{equation}\label{zarowka1}
(K_2+1)h_{\max}^{(Mp)}\leq \left\lfloor \frac{j-1}{2}\right\rfloor < (K_2+1)h_{\max}^{((M+1)p)},
\end{equation}
where $\lfloor \cdot \rfloor$ is the floor function. From the right inequality in~\eqref{zarowka1} and from ~\eqref{wysok1} we have 
\begin{equation}\label{tri1}
j\leq h_{\min}^{((M+M_2)p)}.
\end{equation}

Now we will obtain a lower bound for $\min\mathcal{P}\left(\{T^{-k_0+l}\beta_{i_0}\colon 0\leq l\leq j-1\}\right)$. Cut the towers for $\mathcal{R}^{((M+M_2)p)}$ at the points $\rho^{M+M_2}T^{-1}\beta_{i_0}$ and $\rho^{M+M_2}T\beta_{i_0}$. Let $\mathcal{P}_{i_0}$ stand for the partition of the interval $[0,1)$ after cutting the towers. Since the point $\beta_{i_0}$ is a left end of some floor of some tower for $\mathcal{R}^{((M+M_2)p)}(T)$ (see the first part of the proof), in view of the inequality~\eqref{tri1} we obtain
\begin{equation*}
\#\left(\left\{T^{-s}\beta_{i_0}\colon 0\leq s\leq j-1\right\}\cap I^{((M+M_2)p)}\right)\leq 1
\end{equation*}
and
\begin{equation*}
\#\left(\left\{T^{s}\beta_{i_0}\colon 0\leq s\leq j-1\right\}\cap I^{((M+M_2)p)}\right)\leq 1.
\end{equation*}
Therefore the partition $\mathcal{P}_{i_0}$ is finer than $\mathcal{P}(\{T^{k_0+l}\beta_{i_0}\colon 0\leq l\leq j-1\})$
and 
\begin{equation*}
\min\mathcal{P}\left(\left\{T^{k_0+l}\beta_{i_0}\colon 0\leq l\leq j-1\right\}\right)\geq \min\mathcal{P}_{i_0}.
\end{equation*}
Let $\widetilde{c}_2>1$ be such that
\begin{equation*}
\frac{1}{\widetilde{c}_2r}\leq \min\mathcal{P}(\{T\beta_{i_0},T^{-1}\beta_{i_0},\beta_{i}\colon 0\leq i\leq r-1 \}).
\end{equation*}
Hence, by the definition of $\mathcal{P}^{L}_{i_0}$
\begin{equation*}
\min\mathcal{P}_{i_0}=\rho^{M+M_2}\min\mathcal{P}\left(\left\{T\beta_{i_0},T^{-1}\beta_{i_0},\beta_i\colon 0\leq i\leq r-1\right\}\right) \geq \rho^{M+M_2}\frac{1}{\widetilde{c}_2r}.
\end{equation*}

Now we will estimate $\max\mathcal{P}(\{T^{-k_0+l}\beta_{i_0}\colon 0\leq l\leq j-1\})$ from above. Consider the towers for $\mathcal{R}^{(Mp)}(T)$ and cut them at the points $\rho^{M} T^{-s}\beta_{i_0}$ and $\rho^{M} T^s\beta_{i_0}$ ($1\leq s\leq K_2$). Among the points $\{T^{-k_0+l}\beta_{i_0}\colon 0\leq l\leq j-1\}$ there are either at least $\left\lfloor \frac{j-1}{2}\right\rfloor$ backward iterations of $\beta_{i_0}$ or at least $\left\lfloor \frac{j-1}{2}\right\rfloor$ forward iterations of $\beta_{i_0}$. In either case we conclude from the left inequality in~\eqref{zarowka1} and the definition of $K_2$ that in each floor of each tower there is at least one point of the form $T^{-k_0+l}\beta_{i_0}$ ($0\leq l\leq j-1$). Notice that each floor of each tower is an interval. Therefore and from the definition of $\rho$
\begin{equation*}
\max\mathcal{P}(\{T^{-k_0+l}\beta_{i_0}\colon 0\leq l\leq j-1\})\leq 2\rho^{M}\max\mathcal{P}_1\leq 2\rho^M\frac{c_1}{r}
\end{equation*}
($c_1$ was defined in~\eqref{c1}). To end the proof of condition (ii) we apply the same arguments as in the end part of the proof of (i).
\end{proof}

\section{From absence of partial rigidity to absence of self-similarities}\label{selfsim}
\subsection{Weak convergence and ``non-stretching'' of Birkhoff sums}
An important tool for us will be the following result, which will allow us to use Lemma~\ref{lm:fl08}. 

\begin{tw}[\cite{FL05}]\label{tw15}\footnote{For more details concerning this theorem see Section~\ref{se:out}.}
Let $T\colon (X,\mathcal{B},\mu)\to(X,\mathcal{B},\mu)$ be an ergodic automorphism and $f\in L^2 (X,\mu)$ a positive function for which there exists $c>0$ such that $0<c\leq f(x)$ for a.a. $x\in X$. Suppose that $\{D_n\}$ is a sequence of Borel subsets of $X$, $\{q_n\}$ is an increasing sequence of natural numbers, and $\{a_n\}$ is a sequence of real numbers such that
\begin{itemize}
\item
$\mu(D_n)\to a >0$ as $n\to\infty$,
\item
$\mu(D_n\triangle T^{-1}D_n)\to 0$ as $n \to\infty$,
\item
$\sup_{x \in D_n} d(x,T^{q_n}x) \to 0$,
\item
the sequence $\{ \int_{D_n} |f^{(q_n)}(x)-a_n|^2 d \mu (x)\}$ is bounded,
\item
$\frac{1}{\mu(D_n)}\left(\left(f^{(q_n)}(x)-a_n\right)|_{D_n}\right)_{\ast}\left(\mu|_{D_n}\right) \to P$ weakly in $\mathcal{P}(\mathbb{R})$ the set of probability Borel measures on $\mathbb{R}$,
\item
the sequence $\{(T^f)_{a_n}\}$ converges in the weak operator topology.
\end{itemize}
Then for some $J \in J(T^f)$, $\{(T^f)_{a_n}\}$ converges weakly to the operator
$a \int_{\mathbb{R}} (T^f)_t d P(t) + (1-a) J$.
\end{tw}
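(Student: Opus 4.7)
The plan is to identify the weak limit of $\{(T^f)_{a_n}\}$ by testing against a dense family of tensor observables and splitting the space $X^f$ into the portion lying over $D_n$ and its complement. Denote by $\mu^f$ the normalized invariant measure on $X^f$, set $E_n=\{(x,s)\in X^f:x\in D_n\}$, and consider $F(x,s)=\varphi(x)\eta(s)$ and $G(x,s)=\psi(x)\xi(s)$ with $\varphi,\psi\in L^\infty(X,\mu)$ continuous on a dense subclass and $\eta,\xi\in C_c(\mathbb{R})$; such functions span a dense subspace of $L^2(X^f,\mu^f)$, so it suffices to compute
\begin{equation*}
\lim_{n\to\infty}\langle (T^f)_{a_n}F,G\rangle
=\lim_{n\to\infty}\Bigl[\langle (T^f)_{a_n}F,G\mathbbm{1}_{E_n}\rangle+\langle (T^f)_{a_n}F,G\mathbbm{1}_{E_n^c}\rangle\Bigr].
\end{equation*}

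For the $E_n$-term, recall the Kakutani-skew-product description: whenever the integer count of roof crossings of the $a_n$-orbit of $(x,s)$ equals $q_n$, one has $(T^f)_{a_n}(x,s)=\bigl(T^{q_n}x,\,s+a_n-f^{(q_n)}(x)\bigr)$. The $L^2$-boundedness of $\{f^{(q_n)}-a_n\}|_{D_n}$ makes the family $(f^{(q_n)}-a_n)_{\ast}(\mu|_{D_n})/\mu(D_n)$ uniformly tight, and together with the compact support of $\eta,\xi$ this shows that the set on which the crossing count differs from $q_n$ contributes $o(1)$. On the complementary full-measure set, the uniform rigidity $\sup_{x\in D_n}d(x,T^{q_n}x)\to0$ lets us replace $\varphi(T^{q_n}x),\psi(T^{q_n}x)$ by $\varphi(x),\psi(x)$ with error tending to zero (after approximating these observables by continuous ones). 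Fubini factorizes the integral into an $x$-integral over $D_n$ and an $s$-integral depending on $f^{(q_n)}(x)-a_n$, so that the weak convergence $\tfrac{1}{\mu(D_n)}\bigl((f^{(q_n)}-a_n)|_{D_n}\bigr)_{\ast}(\mu|_{D_n})\to P$ together with $\mu(D_n)\to a$ yields
\begin{equation*}
\langle (T^f)_{a_n}F,G\mathbbm{1}_{E_n}\rangle\longrightarrow a\int_{\mathbb{R}}\langle (T^f)_tF,G\rangle\,dP(t)=\Bigl\langle a\!\int_{\mathbb{R}}(T^f)_t\,dP(t)F,G\Bigr\rangle.
\end{equation*}

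For the $E_n^c$-term, define the sub-Markov operators $J_n:=(T^f)_{a_n}\circ M_{\mathbbm{1}_{E_n^c}}$, where $M_{\mathbbm{1}_{E_n^c}}$ is multiplication by the cut-off; these satisfy $J_n\mathbbm{1}\to(1-a)\mathbbm{1}$ in $L^2$. By weak-operator compactness of the unit ball and the assumed convergence of $\{(T^f)_{a_n}\}$, the whole sequence $(1-a)^{-1}J_n$ converges to a Markov operator $J$. The approximate invariance $\mu(D_n\triangle T^{-1}D_n)\to0$ carries over to approximate $T^f$-invariance of $E_n$ (on time scales $\ll\min f$), forcing $J$ to intertwine $T^f$ with itself, i.e.\ $J\in J(T^f)$. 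Adding the two pieces gives the asserted formula. The main obstacle is the first step: honestly controlling the set on which the skew-product formula for $(T^f)_{a_n}$ fails and uniformly approximating $\varphi,\psi$ so that the substitution under $T^{q_n}$ is compatible with tightness of the pushforwards; once this approximation scheme is in place the remaining limits follow from the hypotheses and Fubini.
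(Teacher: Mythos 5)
First, a remark on the comparison itself: the paper does not prove Theorem~\ref{tw15} --- it is quoted from~\cite{FL05} and used as a black box --- so there is no in-paper argument to measure your proposal against. Your overall architecture (split $\langle (T^f)_{a_n}F,G\rangle$ into the parts over $E_n$ and $E_n^c$, use the identity $(T^f)_{a_n}(x,s)=\left(T^{q_n}x,\,s+a_n-f^{(q_n)}(x)\right)$ over $D_n$ together with tightness and rigidity, and package the complementary part as $(1-a)J$) is the right one and is essentially the strategy of~\cite{FL05}.

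There is, however, a genuine gap at the central step, where you pass from the $E_n$-term to $a\int_{\mathbb{R}}\langle (T^f)_tF,G\rangle\,dP(t)$ ``by Fubini'' and the weak convergence of the pushforwards. After your substitutions the $E_n$-term takes the form $\sum_j\langle (T^f)_{-c_j}F,\,G\,\mathbbm{1}_{E_n^j}\rangle+o(1)$, where $E_n^j$ lies over the level set $D_n^j=\{x\in D_n\colon f^{(q_n)}(x)-a_n\in[c_j,c_{j+1})\}$. The hypothesis on the pushforward measures controls only the \emph{distribution of the values} $f^{(q_n)}(x)-a_n$, i.e. it gives $\mu(D_n^j)\to aP([c_j,c_{j+1}))$; it says nothing about where $D_n^j$ sits inside $X$, and a WOT limit of $M_{\mathbbm{1}_{E_n^j}}$ is a priori only some positive contraction $R^j$, yielding $\sum_j R^j(T^f)_{-c_j}$ rather than $a\sum_jP([c_j,c_{j+1}))(T^f)_{-c_j}$. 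What is missing is the equidistribution $\mathbbm{1}_{D_n^j}\rightharpoonup aP([c_j,c_{j+1}))$ weak-$*$ in $L^\infty(X)$, and this is precisely where the two hypotheses you hardly use must enter: from $\sup_{x\in D_n}d(x,T^{q_n}x)\to0$ one gets (via Lusin) that $f^{(q_n)}\circ T-f^{(q_n)}=f\circ T^{q_n}-f\to0$ in measure on $D_n$, hence, for $P$-continuity intervals, $\mu(D_n^j\triangle T^{-1}D_n^j)\to0$; every weak-$*$ limit point of $\mathbbm{1}_{D_n^j}$ is then $T$-invariant and, by \emph{ergodicity} of $T$ (which your argument never invokes), constant, equal to $\lim\mu(D_n^j)$. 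This also repairs the normalization: it gives $\int_{D_n^j}f\,d\mu\to aP([c_j,c_{j+1}))\int_Xf\,d\mu$, so the coefficient really is $a=\lim\mu(D_n)$ and not $\lim\mu^f(E_n)$. Two smaller points: $J_n\mathbbm{1}=\mathbbm{1}_{E_n^c}$ converges to the constant $1-a$ only weakly (by the same invariance-plus-ergodicity mechanism), not in $L^2$ norm as you assert; and the cleanest way to see that $J$ is Markov and commutes with the flow is to define it as $\frac{1}{1-a}\bigl(\lim_n(T^f)_{a_n}-a\int_{\mathbb{R}}(T^f)_t\,dP(t)\bigr)$ once the first term has been identified, rather than through approximate invariance of $E_n$ alone.
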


We claim, that Theorem~\ref{tw15} is applicable in our case, i.e. where the roof function is given by $f+g$ (function $f$ is defined by~\eqref{fczyste} and the equality~\eqref{symetry} holds, i.e. the singularities are of \emph{symmetric type}, $g$ is piecewise absolutely continuous and continuous whenever $f$ is so). As sets $D_n$ we take the ``rigidity sets'' $C_n$ constructed by~C.~Ulcigrai in~\cite{0901.4764}. They are a modification of the sets used by A.~Katok in~\cite{Katok80} to show that IETs are never mixing. C.~Ulcigrai considers a more general class of flows than us, namely IETs which admit so-called balanced return times (for the definition and more details we refer to~\cite{0901.4764}). It is shown that there exist a sequence $\{C_n\}_{n\in\mathbb{N}}$ of measurable subsets $C_n\subset [0,1)$, a sequence $\{q_n\}_{n\in\mathbb{N}}$, $q_n\in\mathbb{N}$, a sequence of finite partitions $\{\xi_n\}_{n\in\mathbb{N}}$ of $[0,1)$ and $M>0$ such that
\begin{itemize}
\item[(i)]
$m(C_n)>a$ for some positive constant $a$,
\item[(ii)]
for any $F\in\xi_n$, $T^{q_n}(F\cap C_n)\subset F$,
\item[(iii)]
$\max\{\textrm{diam}(F)\colon F\in\xi_n\} \to 0$ as $n\to \infty$,
\item[(iv)]
$\left|f^{(q_n)}(x)-f^{(q_n)}(y)\right|<M$ for all $x,y\in C_n$.
\end{itemize}
The construction is carried out in such a way that the sets $C_n$ are unions of levels of towers with appropriately chosen sets in the base, in particular the diameters of these base sets converge to zero as $n$ tends to infinity. Therefore
$m(C_n\triangle T^{-1}(C_n))\to 0$ as $n\to\infty$.

Notice that the conditions $(ii)$ and $(iii)$ imply
\begin{equation*}
\sup_{x\in C_n} d(x,T^{q_n}x)\to 0 \textrm{ as }n\to \infty.
\end{equation*}
The condition $(iv)$ was used first by A.~V.~Kochergin in~\cite{Kochergin76}. He proved it to be a sufficient condition for a special flow to be not mixing, provided that there exist rigidity sets for the base automorphism, i.e. sets such that the conditions $(i)$, $(ii)$ and $(iii)$ are fulfilled. Moreover, in~\cite{Katok80} it was shown that for any function $h$ of bounded variation
\begin{itemize}
\item[(v)]
$| h^{(q_n)}(x)-h^{(q_n)}(y) |<M'$
\end{itemize}
for some constant $M$ for all $x,y \in C_n$.

From~$(iv)$ and~$(v)$ with $h=g$ it follows that
\begin{equation*}
\left\{ \int_{C_n}\left|(f+g)^{(q_n)}(x)-a_n\right|^2dm(x) \right\}_{n\in\mathbb{N}},
\end{equation*}
where $a_n=(f+g)^{(q_n)}(x_0)$ for some $x_0\in C_n$, is bounded. The distributions $\left( (f+g)^{(q_n)}(x)-a_n \right)_{\ast}(m|_{C_n})$ are uniformly tight and we may assume (passing to a subsequence if necessary) that
\begin{equation*}
\left( (f+g)^{(q_n)}(x)-a_n \right)_{\ast}(m|_{C_n})\to P
\end{equation*} 
weakly in $\mathcal{P}(\mathbb{R})$ for some measure $P$. From separability (passing again to a subsequence if needed), we deduce that $\{(T^{f+g})_{a_n}\}$ converges in the weak operator topology.


\subsection{The absence of self-similarities}
We will prove now Theorem~\ref{tw:g}. We will use the Lemma~\ref{lm:fl08}\cite{FL08} recalled in the introduction. Let us first prove a counterpart of Theorem~\ref{tw:g} expressed in terms of the special flow representation.
\begin{tw}\label{tw18}
Assume that $T^{f+g}$ is a special flow over an IET $T$ with balanced partition lengths, $f$ is given by equation~\eqref{fczyste} and satisfies~\eqref{symetry} and $g$ is a piecewise absolutely continuous function (continuous whenever $f$ is continuous), such that $f+g>0$. Then $T^{f+g}$ is not self-similar.
\end{tw}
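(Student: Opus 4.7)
The plan is to apply Lemma~\ref{lm:fl08} directly. That lemma has two hypotheses: absence of partial rigidity, and the existence of an operator of the form $a\int_{\mathbb{R}}T_t\ dP(t)+(1-a)J$ in the weak closure $\{(T^{f+g})_t\colon t\in\mathbb{R}\}^d$ for some $0<a\leq 1$, some $P\in\mathcal{P}(\mathbb{R})$ and some $J\in J(T^{f+g})$. Both pieces have essentially been prepared in the preceding sections, so the proof is a matter of assembling them.

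First I would invoke Theorem~\ref{tw12}: since $T$ has balanced partition lengths and the roof $f+g$ satisfies the hypotheses of that theorem (the symmetry condition~\eqref{symetry} is not needed here), $T^{f+g}$ is not partially rigid. This takes care of the first hypothesis of Lemma~\ref{lm:fl08}.

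Next I would produce the operator required by Lemma~\ref{lm:fl08} by applying Theorem~\ref{tw15} with $D_n=C_n$, where $\{C_n\}$ is the sequence of rigidity sets coming from Ulcigrai's construction in~\cite{0901.4764}, and $q_n$ the associated return times. The discussion preceding this theorem already verifies the hypotheses of Theorem~\ref{tw15} one by one: property~(i) of the $C_n$'s gives $m(C_n)\to a>0$ along a subsequence; the fact that each $C_n$ is a union of tower levels with small base diameters gives $m(C_n\triangle T^{-1}C_n)\to 0$; properties~(ii) and~(iii) together give $\sup_{x\in C_n}d(x,T^{q_n}x)\to 0$; property~(iv) for $f$ together with the bounded-variation estimate~(v) applied to $g$ yields boundedness of $\{\int_{C_n}|(f+g)^{(q_n)}-a_n|^2\,dm\}$ for a suitable choice of $a_n$; tightness plus separability then give, along a further subsequence, weak convergence of the pushforward measures to some $P\in\mathcal{P}(\mathbb{R})$ and weak operator convergence of $(T^{f+g})_{a_n}$. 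Theorem~\ref{tw15} then produces $J\in J(T^{f+g})$ such that $(T^{f+g})_{a_n}\to a\int_{\mathbb{R}}(T^{f+g})_t\,dP(t)+(1-a)J$ in the weak operator topology. In particular this operator lies in $\{(T^{f+g})_t\colon t\in\mathbb{R}\}^d$.

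With both hypotheses of Lemma~\ref{lm:fl08} verified, we conclude that $T^{f+g}$ has no self-similarities. The only step that requires the symmetry condition~\eqref{symetry} is the appeal to Ulcigrai's construction of the sets $C_n$ with the bounded-deviation property~(iv), since her argument for non-mixing of smooth flows on higher-genus surfaces is precisely what exploits the symmetry of the logarithmic singularities; this is where I expect the most delicate dependence on hypotheses to sit, and it is the reason the theorem needs~\eqref{symetry} whereas Theorem~\ref{tw12} does not. Everything else is routine bookkeeping: taking the necessary subsequences so that all the convergences required by Theorem~\ref{tw15} hold simultaneously, and checking that $a>0$ so that Lemma~\ref{lm:fl08} truly applies.
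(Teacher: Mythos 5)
Your proposal is correct and follows exactly the paper's own argument: Theorem~\ref{tw12} for the absence of partial rigidity, Theorem~\ref{tw15} applied to Ulcigrai's rigidity sets $C_n$ to place an operator of the form $a\int_{\mathbb{R}}(T^{f+g})_t\,dP(t)+(1-a)J$ in the weak closure, and then Lemma~\ref{lm:fl08}. Your remark about where the symmetry condition~\eqref{symetry} enters (only through Ulcigrai's bounded-Birkhoff-sums property~(iv), not through Theorem~\ref{tw12}) also matches the paper's discussion.
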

\begin{proof}
By Theorem~\ref{tw15} we have that $(T^{f+g})_{a_n}$ converges weakly to the operator $a\int_{\mathbb{R}}(T^{f+g})_tdP(t)+(1-a)J$ for some $J\in J(T^f)$. From Theorem~\ref{tw12} the considered flow is not partially rigid. Therefore, from Lemma~\ref{lm:fl08} we conclude the absence of self-similarities.
\end{proof}

Theorem~\ref{tw:g} announced in the introduction now easily follows.
\begin{proof}[Proof of Theorem~\ref{tw:g}]
The claim follows directly from the discussion in Section~\ref{reduction}, by Lemma~\ref{kazdypjestb} and Theorem~\ref{tw18}.
\end{proof}

\subsection{The absence of spectral self-similarities}\label{se:6.3}
In this section we discuss the problem of the absence of spectral self-similarities. With minor modification we follow the approach proposed in~\cite{FL08}. To begin with, let us give a formal definition which is the spectral counterpart of the notion of the set of scales of self-similarities. By $M(L^2(X,\mu))$ we denote the convex set of Markov operators $V \colon L^2(X,\mu) \to L^2(X,\mu)$, i.e. $V$ is a positive operator such that $V(1)=1$ and $V^\ast (1)=1$. Let $\mathcal{V}=(V_t)_{t\in\mathbb{R}}$ be a continuous representation of $\mathbb{R}$ in $M(L^2(X,\mu))$. Representations $\mathcal{V}=(V_t)_{t\in\mathbb{R}}$ and $\mathcal{V}'=(V'_t)_{t\in\mathbb{R}}$ are said to be spectrally isomorphic if there exists a unitary operator $U \colon L^2(X,\mu) \to L^2(X,\mu)$ such that $U \circ V'_t = V_t \circ U$ for all $t\in\mathbb{R}$.
\begin{df}
The \emph{set of scales of spectral self-similarities} is given by
\begin{equation*}
I_{sp}(\mathcal{V})=\left\{ s\in\mathbb{R} \setminus \{0\}\colon \mathcal{V} \text{ and } \mathcal{V}_s \text{ are spectrally isomorphic}\right\}.
\end{equation*}
If $I_{sp}(\mathcal{V}) \subset \{-1,1\}$, we say that $\mathcal{V}$ has \emph{no spectral self-similarities}.
\end{df}
Let $R_s\colon \mathbb{R} \to \mathbb{R}$ stand for the rescaling map $R_s(t)=st$. Denote by $\mathcal{P}(\mathbb{R})$ the set of all probability Borel measures on $\mathbb{R}$. Let $P_{s}=(R_{s})_{\ast}(P)$.



\begin{uw}\label{rm:FL08}
As noticed in~\cite{FL08},
\begin{equation*}
\int_{\mathbb{R}} V_t d P_{s_n} \to I
\end{equation*}
whenever $P \in \mathcal{P}(\mathbb{R})$ and $s_n\to 0$.

\end{uw}

The next lemma is a modification of Lemma 6.3 in~\cite{FL08}. Let $\left\{V_t \colon t\in \mathbb{R} \right\}^d$ stand for the closure of $\left\{V_t \colon t\in \mathbb{R} \right\}$ in the weak operator topology.
\begin{lm}\label{zmodyfikowane}
Suppose that there exists $x\in I_{sp} (\mathcal{V}) \setminus \{-1,1\}$ and there exists $P\in\mathcal{P}(\mathbb{R})$ and $0<a\leq 1$ such that
\begin{equation*}
a\int_{\mathbb{R}} V_tdP(t) + (1-a)J \in \left\{V_t \colon t\in \mathbb{R} \right\}^d
\end{equation*}
for some $J \in M(L^2(X,\mu))$. Then
\begin{equation*}
a I + (1-a) K \in \{V_t \colon t\in\mathbb{R}\}^d
\end{equation*}
for some contraction $K$ on $L^2(X,\mu)$.
\end{lm}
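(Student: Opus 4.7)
The plan is to use the spectral self-similarity $x$ to conjugate the operator $W:=a\int_{\mathbb{R}}V_t\,dP(t)+(1-a)J$ by iterates of a unitary intertwiner and then to apply Remark~\ref{rm:FL08} to push the integral piece toward the identity. The hypothesis $x\in I_{sp}(\mathcal{V})\setminus\{-1,1\}$ gives a unitary $U$ on $L^2(X,\mu)$ satisfying $U^{-1}V_tU=V_{xt}$ for all $t\in\mathbb{R}$. Since $|x|\neq 1$, and since the inverse unitary $U^{-1}$ witnesses $x^{-1}\in I_{sp}(\mathcal{V})$, I may replace $x$ by $x^{-1}$ if necessary to arrange $|x|<1$, so that $x^n\to 0$.

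Next, I would note that conjugation by $U$ is continuous in the weak operator topology and that the relation $U^{-n}V_tU^{n}=V_{x^n t}$ implies that $\{V_t\colon t\in\mathbb{R}\}^d$ is invariant under conjugation by every $U^n$. Applying $U^{-n}$ to $W$ and carrying the conjugation through the Bochner integral gives
\[
U^{-n}WU^n=a\int_{\mathbb{R}}V_{x^n t}\,dP(t)+(1-a)U^{-n}JU^n=a\int_{\mathbb{R}}V_u\,dP_{x^n}(u)+(1-a)J_n,
\]
where $J_n:=U^{-n}JU^n$; in particular every $U^{-n}WU^n$ still lies in $\{V_t\colon t\in\mathbb{R}\}^d$.

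Finally, since $x^n\to 0$, Remark~\ref{rm:FL08} forces $\int_{\mathbb{R}}V_u\,dP_{x^n}(u)\to I$ in the weak operator topology. Because $J$ is a Markov operator (hence an $L^2$-contraction) and unitary conjugation preserves the operator norm, every $J_n$ is a contraction, so by weak compactness of the closed unit ball of $B(L^2(X,\mu))$ one can extract a subsequence $n_k$ along which $J_{n_k}\to K$ for some contraction $K$. Passing to the limit yields $U^{-n_k}WU^{n_k}\to aI+(1-a)K$; since $\{V_t\colon t\in\mathbb{R}\}^d$ is closed, the operator $aI+(1-a)K$ belongs to it, which is precisely the required conclusion. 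The only step needing a little care is the extraction of the WOT-convergent subsequence of $\{J_n\}$, which is routine because $L^2(X,\mu)$ is separable and hence the WOT is metrizable on bounded sets; everything else is formal manipulation of the intertwining relation together with continuity of unitary conjugation in the WOT.
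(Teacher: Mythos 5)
Your proposal is correct and follows essentially the same route as the paper: obtain the intertwining unitary $U$, conjugate to replace $P$ by $P_{x^n}$ with $x^n\to 0$, invoke Remark~\ref{rm:FL08} to send the integral term to $I$, and extract a WOT-convergent subsequence of the conjugated contractions $J_n$. The only cosmetic difference is that you conjugate the limit operator directly using WOT-continuity of unitary conjugation, while the paper conjugates the approximating sequence $V_{t_n}$ and then passes to the limit; both are equivalent.
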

\begin{proof}
Since $s \in I_{sp}(\mathcal{V})$, there exists a unitary operator $U \colon L^2(X,\mu) \to L^2(X,\mu)$ such that $U \circ  V_{st}=V_t \circ U$ for all $t \in \mathbb{R}$. Therefore,
\begin{equation*}
U^m \circ V_{s^mt}=V_t \circ U^m \text{ for every }t\in\mathbb{R} \text{ and }m\in\mathbb{Z}.
\end{equation*}
By the assumption, there exists a sequence $(t_n)$ such that $|t_n| \to +\infty$ and
\begin{equation*}
V_{t_n} \to a \int_{\mathbb{R}}V_t\ dP(t) + (1-a)J \text{ weakly}.
\end{equation*}
It follows that
\begin{multline*}
V_{s^mt_n}=U^{-m} \circ V_{t_n} \circ U^m \to a\int_{\mathbb{R}} U^{-m}\circ V_t \circ U^m dP(t) + (1-a)J_m
\\
=a\int_{\mathbb{R}} V_{s^mt}dP(t)+(1-a)J_m = a\int_{\mathbb{R}}V_t\ dP_{s^m}(t)+(1-a)J_m,
\end{multline*}
where $J_m=U^{-m} \circ J \circ U^m$. Hence
\begin{equation*}
a\int_{\mathbb{R}} V_t\ dP_{s^m}(t)+(1-a)J_m \in \{V_t \colon t\in \mathbb{R}\}^d.
\end{equation*}
Assume that $|s|<1$, in the case $|s|>1$ the proof follows by the same method by taking the sequence $(s^{-m})_{m=1}^{\infty}$ instead of $(s^m)_{m=1}^{\infty}$. By passing to a subsequence if necessary, we can assume that $J_m \to K$ weakly, where $K$ is a contraction.\footnote{Every Markov operator is a contraction, see e.g. A.~M.~Vershik~\cite{MR0476998}.} Since $s^m \to 0$ as $m \to +\infty$, by Remark~\ref{rm:FL08},
\begin{equation*}
a \int_{\mathbb{R}} V_t d\left(R_{s^m}\right)_{\ast}(P)(t) + (1-a)J_m \to aI+(1-a)K \text{ as }m\to +\infty.
\end{equation*}
Thus 
\begin{equation*}
a I+(1-a)K \in \left(\left\{V_t \colon t \in\mathbb{R} \right\}^d\right)^d= \{V_t \colon t\in\mathbb{R}\}^d.
\end{equation*}
\end{proof}

\begin{uw}
Notice that the only difference between Lemma~\ref{zmodyfikowane} and Lemma 6.3 in~\cite{FL08} is that the obtained operator $K$ is a contraction, not necessarily a Markov operator.
\end{uw}

The following theorem is a spectral counterpart of Theorem~6.4 in~\cite{FL08}. Notice that in the second part of the preceding theorem we need to assume that $1/2< a \leq 1$. For the role of $1/2$ see also Example~\ref{uw:spnotin} and Proposition~\ref{pr:last}.
\begin{tw}\label{tw:spco}
Let $\mathcal{T}=\{T_t\}_{t\in\mathbb{R}}$ be a measure-preserving flow on $(X,\mu)$ such that $\mathcal{T}$ is spectrally isomorphic to $\mathcal{T}\circ s$ for some $s \neq \pm 1$.
\begin{itemize}
\item
If $\int_{\mathbb{R}}T_tdP(t)$ belongs to $\{T_t\colon t \in\mathbb{R}\}^d$ for some $P\in \mathcal{P}(\mathbb{R})$ then $\mathcal{T}$ is rigid.
\item
If $a\int_{\mathbb{R}}T_tdP(t)+(1-a)J \in \{T_t\colon t\in\mathbb{R}\}^d$ for some $1/2< a \leq 1$, $P\in\mathcal{P}(\mathbb{R})$ and $J\in J(\mathcal{T})$ then $\mathcal{T}$ is partially rigid.
\end{itemize}
\end{tw}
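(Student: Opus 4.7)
Both statements reduce to Lemma~\ref{zmodyfikowane}: the first part is the special case $a=1$, and in either case the lemma produces a contraction $K$ on $L^2(X,\mu)$ such that
\[aI+(1-a)K\in\{T_t\colon t\in\mathbb{R}\}^d.\]
Unwinding the proof of Lemma~\ref{zmodyfikowane}, I would extract a concrete sequence $(r_k)$ with $|r_k|\to\infty$ and $T_{r_k}\to aI+(1-a)K$ in the weak operator topology. This is a diagonal argument from the nested limits produced there: for each $m$ one has $T_{s^mt_n}\to a\int T_t\,dP_{s^m}(t)+(1-a)J_m$ as $n\to\infty$, so picking $n_m$ large enough that $T_{s^mt_{n_m}}$ is within $1/m$ (in some metric for the weak operator topology on the unit ball) of $aI+(1-a)K$ while $|s^mt_{n_m}|\ge m$ produces the desired $r_k$. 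Passing to a further subsequence I may assume $r_k\to+\infty$; otherwise $r_k\to-\infty$ and I replace $T_{r_k}$ by its adjoint $T_{-r_k}\to aI+(1-a)K^*$, which has the same shape since $K^*$ is again a contraction.

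\textbf{First part.} With $a=1$ the construction above yields a sequence $r_k\to+\infty$ with $T_{r_k}\to I$ in the weak operator topology, which is by definition rigidity of $\mathcal{T}$.

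\textbf{Second part.} For any measurable $A$, the convergence $T_{r_k}\to aI+(1-a)K$ together with $T_{r_k}^{\ast}=T_{-r_k}$ gives
\[\mu(A\cap T_{-r_k}A)=\langle\chi_A,T_{r_k}\chi_A\rangle\;\longrightarrow\;a\mu(A)+(1-a)\langle\chi_A,K\chi_A\rangle.\]
Since $K$ is a contraction, Cauchy--Schwarz gives $|\langle\chi_A,K\chi_A\rangle|\le\|K\|\,\mu(A)\le\mu(A)$, and therefore
\[\liminf_{k\to\infty}\mu(A\cap T_{-r_k}A)\;\ge\;a\mu(A)-(1-a)\mu(A)\;=\;(2a-1)\mu(A).\]
Because $a>1/2$, the number $u:=2a-1$ lies in $(0,1]$ and $\mathcal{T}$ is partially rigid along $(r_k)$, as required.

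\textbf{Where it is delicate.} The only substantive difference from the metric statement (Theorem 6.4 of~\cite{FL08}) is that, in the spectral setting, Lemma~\ref{zmodyfikowane} produces only a contraction $K$ rather than a Markov operator; one consequently loses the nonnegativity $\langle\chi_A,K\chi_A\rangle\ge 0$ and must absorb a possible negative contribution of magnitude $(1-a)\mu(A)$ coming from the Cauchy--Schwarz bound. This is precisely what forces the threshold $1/2<a\le 1$ and is the main obstruction to extending the conclusion to the whole range $0<a\le 1$; the borderline role of $a=1/2$ is reflected in Example~\ref{uw:spnotin} and Proposition~\ref{pr:last} following this theorem.
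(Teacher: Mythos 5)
Your proposal is correct and follows essentially the same route as the paper: both parts are reduced to Lemma~\ref{zmodyfikowane}, and the second part is finished by the Cauchy--Schwarz bound $|\langle\chi_A,K\chi_A\rangle|\le\mu(A)$ for the contraction $K$, yielding the constant $2a-1>0$. The only difference is that you spell out the diagonal extraction of a time sequence $r_k$ with $|r_k|\to\infty$ realizing the limit operator, a step the paper leaves implicit.
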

\begin{proof}
The first part of the claim follows directly from Lemma~\ref{zmodyfikowane}. To prove the second part suppose that $a\int_{\mathbb{R}}T_tdP(t)+(1-a)J \in \{T_t\colon t\in\mathbb{R}\}^d$ for some $1/2< a \leq 1$. By Lemma~\ref{zmodyfikowane} for any measurable set $A\subset X$ we have
\begin{multline*}
\liminf_{n \to \infty} \mu(T_{t_n}A \cap A) =a \mu (A)+(1-a)\langle K\mathbbm{1}_A,\mathbbm{1}_A \rangle 
\\
\geq a\mu(A)- (1-a)\mu(A)=(2a-1)\mu(A).
\end{multline*}
Since $0< 2a-1\leq 1$, the proof is complete.
\end{proof}

\begin{wn}\label{wnspektralny}
If $\mathcal{T}$ is non-rigid and $\int_{\mathbb{R}}T_t\ dP(t)$ belongs to $\{T_t \colon t\in\mathbb{R}\}^d$ for some $P \in \mathcal{P}(\mathbb{R})$ then $\mathcal{T}$ has no spectral self-similarities. If $\mathcal{T}$ is not partially rigid and $a \int_{\mathbb{R}} T_t\ dP(t)+(1-a)J$ belongs to $\{ T_t\colon t \in\mathbb{R}\}^d$ for some $P\in\mathcal{P}(\mathbb{R})$, $1/2< a \leq 1$ and $J \in J(\mathcal{T})$ then $\mathcal{T}$ has no spectral self-similarities.
\end{wn}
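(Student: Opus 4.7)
The plan is to derive this corollary as a direct contrapositive of Theorem~\ref{tw:spco}, with each of the two statements corresponding to one half of that theorem. Both parts reduce to a short argument by contradiction, so I would not need any genuinely new tools beyond what has just been proved.

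For the first statement, I would suppose for contradiction that $\mathcal{T}$ does admit a spectral self-similarity, i.e.\ $I_{sp}(\mathcal{T})\not\subset\{-1,1\}$. By the definition of $I_{sp}(\mathcal{T})$ this means that there exists $s\neq\pm 1$ for which $\mathcal{T}$ and $\mathcal{T}\circ s$ are spectrally isomorphic. Together with the assumption that $\int_{\mathbb{R}}T_t\,dP(t)\in\{T_t\colon t\in\mathbb{R}\}^d$ for some $P\in\mathcal{P}(\mathbb{R})$, the hypotheses of the first part of Theorem~\ref{tw:spco} are met, and hence $\mathcal{T}$ is rigid. This contradicts the standing assumption that $\mathcal{T}$ is non-rigid, so no such $s$ can exist, i.e.\ $I_{sp}(\mathcal{T})\subset\{-1,1\}$.

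The second statement is proved in exactly the same manner, using the second part of Theorem~\ref{tw:spco} instead: a spectral self-similarity $s\neq\pm 1$ combined with the assumed weak operator closure condition, now with $1/2<a\leq 1$, would force $\mathcal{T}$ to be partially rigid (in fact with rigidity constant at least $2a-1>0$), contradicting the non-partial-rigidity hypothesis. I expect no real obstacle here; the entire content is packaged in Theorem~\ref{tw:spco}, and the role of the threshold $1/2$ is precisely to ensure that the constant $2a-1$ produced in the proof of that theorem lies in $(0,1]$, so that one genuinely obtains partial rigidity rather than a trivial lower bound.
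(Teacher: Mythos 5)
Your proposal is correct and matches the paper's intent exactly: the corollary is stated immediately after Theorem~\ref{tw:spco} without a written proof precisely because it is the contrapositive of that theorem, which is what you carry out. Nothing further is needed.
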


\begin{ex}\label{przykladzik}
Consider a special flow $T^f$ built over a rotation on the circle $T \colon [0,1) \to [0,1)$ by $\alpha$: $Tx=x+\alpha$, where $\alpha$ is an irrational number with bounded partial quotients and under symmetric logarithmic function $f(x)=-a(\log\{x\})+\log\{-x\})+h(x)$, where $a>0$ and $h \colon [0,1) \to \mathbb{R}$ is an absolutely continuous function. By Theorem~\ref{tw12} $T^f$ is not partially rigid and therefore also not rigid. By Theorem~\ref{tw15} (see the discussion in Section~\ref{selfsim}) there exists a sequence $(a_n)$ such that $T_{a_n}^f$ converges weakly to the operator $\int_{\mathbb{R}}T_t^fdP(t)$ (rotation is a rigid transformation and as sets $D_n$ in Theorem~\ref{tw15} we can take the whole interval $[0,1)$ - this is why there is only one term in the limit operator). By Corollary~\ref{wnspektralny} it follows that $T^f$ has no spectral self-similarities.
\end{ex}

The flow in Example~\ref{przykladzik} doesn't belong to the family of flows on surfaces considered by us in this paper. However, there exist smooth flows on surfaces of any genus $\mathbf{g} \geq 2$ which yield this representation. To construct them, it is necessary to allow saddle connections. For more details we refer to~\cite{FL03}.

We will give now two examples showing that partial rigidity is not a spectral invariant. Let us begin by giving a common background for these two examples. Consider an ergodic automorphism $T\colon X\to X$ which is rigid and a cocycle $\varphi \colon X\to \mathbb{Z}_2=\{0,1\}$ such that automorphism $T_{\varphi}\colon X\times\mathbb{Z}_2 \to X\times \mathbb{Z}_2$ given by $T_{\varphi}(x,g)=(Tx,\varphi(x)+g)$ has Lebesgue spectrum on the space $L^2(X\times \mathbb{Z}_2)\ominus L^2(X)\otimes \mathbbm{1}$. Such a cocycle exists for any ergodic, rigid automorphism $T$ (see H.~Helson, W.~Parry~\cite{HP78}). Let $S\colon Y\to Y$ be a Bernoulli automorphism and consider $T \times S \colon X\times Y \to X\times Y$. Notice that $T\times S$ is not partially rigid, whereas $T_{\varphi}$ is partially rigid with rigidity constant $\alpha=1/2$ (see Corollary~1.2. in \cite{Ageev09}).

\begin{ex}\label{uw:spnotin}
Assume additionally that $T$ is an ergodic rotation on a compact abelian group $X$, which has an infinite, closed subgroup $X_0$ such that the quotient space $X/X_0$ is inifnite.\footnote{These assumptions are fulfilled e.g. by $X=\mathbb{T}\times \mathbb{T}$.} Then there exists a cocycle $\varphi\colon X \to \mathbb{Z}_2$ such that automorphism $T_{\varphi}$ has countable Lebesgue spectrum on $L^2(X\times \mathbb{Z}_2)\ominus L^2(X)\otimes \mathbbm{1}$. 

We claim that $T\times S$ has the same spectrum as $T_{\varphi}$. Indeed, we have
\begin{equation*}
L^2_0(X\times Y)=(L^2_0(X)\otimes \mathbbm{1}) \oplus (\mathbbm{1}\otimes L^2_0(Y)) \oplus (L^2_0(X)\otimes L^2_0(Y)),
\end{equation*}
and
\begin{equation*}
L^2_0(X\times \mathbb{Z}_2)= L^2_0(X) \otimes \mathbbm{1} \oplus \left(L^2_0(X\times \mathbb{Z}_2)\ominus L^2_0(X) \otimes \mathbbm{1}\right).
\end{equation*}
Notice that
\begin{itemize}
\item
on $L^2_0(X)\otimes \mathbbm{1}$ spectrum of automorphism $T\times S$ and spectrum of automorphism $T_\varphi$ is the same as spectrum of automorphism $T$ on $L^2_0(X)$,
\item
on $\mathbbm{1}\otimes L^2_0(Y)$ spectrum of automorphism $T\times S$ is the same as spectrum of automorphism $S$, i.e. Lebesgue with infinite multiplicity,
\item
on $L^2_0(X)\otimes L^2_0(Y)$ maximal spectral type of automorphism $T\times S$ is equal to $\sigma_T\ast \sigma_S=\sigma_T \ast \lambda_{\mathbb{T}}\equiv \lambda_{\mathbb{T}}$. 
\end{itemize}
Therefore $T\times S$ and $T_{\varphi}$ have the same spectrum whence they are spectrally isomorphic.
\end{ex}

\begin{ex}\label{uw:spnotin2}
We claim that under the assuptions listed directly before Example ~\ref{uw:spnotin} (without imposing additional properties on $X$ and $T$, i.e. in particular $T$ can be weakly mixing), automorphisms $T_{\varphi}\times T_{\varphi}$ and $T\times S \times T \times S$ have the same spectrum. Indeed, notice that
\begin{itemize}
\item
on 
\begin{multline*}
H_1:=(L^2_0(X)\otimes \mathbbm{1}\otimes L^2_0(X)\otimes \mathbbm{1})\oplus  (L^2_0(X)\otimes \mathbbm{1} \otimes \mathbbm{1}\otimes \mathbbm{1})\oplus\\
 \oplus (\mathbbm{1}\otimes \mathbbm{1} \otimes L^2_0(X)\otimes \mathbbm{1})
\end{multline*}
automorphism $T\times S\times T\times S$ has the same spectrum as automorphism $T\times T$ on $L^2_0(X\times X)$,
\item
on $H_2:=\mathbbm{1}\otimes L^2_0(Y)\otimes \mathbbm{1}\otimes\mathbbm{1}$ automorphism $T\times S\times T\times S$ has Lebesgue spectrum of infinite multiplicity,
\item
on
\begin{equation*}
L^2_0(X\times Y\times X\times Y) \ominus (H_1 \oplus H_2)
\end{equation*}
as in Example~\ref{uw:spnotin}, maximal spectral type of automorphism $T\times S\times T\times S$ is Lebesgue measure.
\end{itemize}
Moreover
\begin{itemize}
\item
on
\begin{multline*}
H:=(L^2_0(X)\otimes \mathbbm{1}\otimes L^2_0(X)\otimes \mathbbm{1})\oplus  (L^2_0(X)\otimes \mathbbm{1} \otimes \mathbbm{1}\otimes \mathbbm{1})\oplus\\
 \oplus (\mathbbm{1}\otimes \mathbbm{1} \otimes L^2_0(X)\otimes \mathbbm{1})
\end{multline*}
automorphism $T_\varphi \times T_\varphi$ has the same spectrum automorphism $T\times T$ on $L^2_0(X\times X)$,
\item
on $L^2_0(X\times \mathbb{Z}_2\times X \times \mathbb{Z}_2) \ominus H$ automorphism $T_\varphi\times T_{\varphi}$ has Lebesgue spectrum of infinite multiplicity.
\end{itemize}
Therefore $T\times S\times T\times S$ and $T_{\varphi}\times T_{\varphi}$ are spectrally isomorphic.

On the other hand, $T_\varphi$ partially rigid with rigidity constant $\alpha=1/2$ whence $T_\varphi\times T_{\varphi}$ is partially rigid with rigidity constant $\alpha=1/4$, whereas $T\times S\times T\times S$ is not partially rigid.
\end{ex}

The following proposition shows that a flow which is spectrally isomorphic to a flow which is partially rigid with the rigidity constant greater than $1/2$ is also partially rigid.
\begin{pr}\label{pr:last}
Let $\mathcal{T}=\{T_t\}_{t\in\mathbb{R}}$ and $\mathcal{S}=\{S_t\}_{t\in\mathbb{R}}$ be measurable flows on probability Borel spaces $(X,\mu)$ and $(Y,\nu)$ respectively. Suppose that $\mathcal{T}$ and $\mathcal{S}$ are spectrally isomorphic and that $\mathcal{T}$ is partially rigid along $\{t_n\}$ with rigidity constant $1/2 < a \leq 1$. Then $\mathcal{S}$ is also partially rigid along the same sequence.
\end{pr}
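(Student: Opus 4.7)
The plan is to transport the partial rigidity of $\mathcal{T}$ to $\mathcal{S}$ by conjugation with a spectral isomorphism $U\colon L^2(X,\mu)\to L^2(Y,\nu)$ satisfying $U\circ T_t=S_t\circ U$ for all $t\in\mathbb{R}$. The key observation is that such a unitary $U$ sends a Markov operator on $(X,\mu)$ to an operator which is still a contraction (because $U$ is an isometry of $L^2$) but is in general \emph{not} Markov; the role of the assumption $a>1/2$ will be precisely to absorb the loss coming from the fact that the conjugated operator need no longer be positivity-preserving.

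First I would fix an arbitrary measurable $A\subset Y$ and, in order to control $\nu(A\cap S_{-t_n}A)=\langle S_{t_n}\mathbbm{1}_A,\mathbbm{1}_A\rangle$ via $\liminf$, argue on subsequences. Given any subsequence of $\{t_n\}$, by weak compactness of the unit ball of contractions I would extract a further subsequence $\{t_{n_{k_j}}\}$ along which both $T_{t_{n_{k_j}}}$ and $S_{t_{n_{k_j}}}$ converge in the weak operator topology, say to $\Psi$ and $\Phi$ respectively. Since conjugation by a fixed unitary is weak-operator continuous on bounded sets, $\Phi=U\Psi U^{-1}$. By Remark~\ref{czesc} applied to $\mathcal{T}$ (which is partially rigid along $\{t_n\}$ with constant $a$), we have $\Psi=a\cdot\mathrm{Id}+(1-a)K$ for some Markov operator $K\in J(\mathcal{T})$, and hence
\begin{equation*}
\Phi=a\cdot\mathrm{Id}+(1-a)\,UKU^{-1}.
\end{equation*}

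Next I would evaluate $\langle\Phi\mathbbm{1}_A,\mathbbm{1}_A\rangle$. Writing $f:=U^{-1}\mathbbm{1}_A$ and using that $U$ is unitary, one gets
\begin{equation*}
\langle\Phi\mathbbm{1}_A,\mathbbm{1}_A\rangle=a\,\nu(A)+(1-a)\langle Kf,f\rangle.
\end{equation*}
The operator $K$, being Markov, is an $L^2$-contraction, so $|\langle Kf,f\rangle|\leq\|f\|^2=\|\mathbbm{1}_A\|^2=\nu(A)$. Therefore
\begin{equation*}
\langle\Phi\mathbbm{1}_A,\mathbbm{1}_A\rangle\geq a\,\nu(A)-(1-a)\,\nu(A)=(2a-1)\,\nu(A),
\end{equation*}
and the assumption $a>1/2$ ensures $2a-1>0$. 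Consequently, along the chosen subsequence, $\nu(A\cap S_{-t_{n_{k_j}}}A)\to\langle\Phi\mathbbm{1}_A,\mathbbm{1}_A\rangle\geq(2a-1)\nu(A)$. Since every subsequence of $\{t_n\}$ admits such a further subsequence, the subsubsequence principle yields $\liminf_{n\to\infty}\nu(A\cap S_{-t_n}A)\geq(2a-1)\nu(A)$ for every measurable $A$, which is exactly partial rigidity of $\mathcal{S}$ along $\{t_n\}$ with constant $2a-1$.

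The only real obstacle is the passage from Markov operator $K$ to its unitary conjugate $UKU^{-1}$: the latter is merely a contraction, so the inequality $\langle UKU^{-1}\mathbbm{1}_A,\mathbbm{1}_A\rangle\geq 0$ available in the metric setting (where $K$ preserves positivity) is lost, and one must instead use the contraction bound $\langle Kf,f\rangle\geq-\|f\|^2$. This is exactly what forces the hypothesis $a>1/2$ and what makes Examples~\ref{uw:spnotin} and~\ref{uw:spnotin2} sharp: for $a\leq 1/2$ the estimate $2a-1\leq 0$ is vacuous, consistent with the fact that partial rigidity with constant $\leq 1/2$ is not a spectral invariant.
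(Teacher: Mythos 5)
Your proof is correct and follows essentially the same route as the paper's: conjugate by the intertwining unitary, apply Remark~\ref{czesc} to write the weak limit of $T_{t_{n_k}}$ as $a\cdot\mathrm{Id}+(1-a)K$, and use the contraction bound $\langle Kf,f\rangle\geq-\|f\|^2$ to obtain the constant $2a-1$. You merely spell out the subsequence/weak-compactness bookkeeping that the paper compresses into ``passing to a subsequence if necessary.''
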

\begin{proof}
By assumption, there exists a unitary operator $U\colon L^2(X,\mu) \to L^2(Y,\nu)$ intertwining $\mathcal{T}$ and $\mathcal{S}$, i.e. such that for all $t\in\mathbb{R}$ 
\begin{equation*}
U\circ T_t=S_t\circ U.
\end{equation*}
Passing to a subsequence if necessary, by Remark~\ref{czesc} we obtain
\begin{align*}
\liminf_{n\to\infty} \langle S_{t_n}\mathbbm{1}_A,\mathbbm{1}_A \rangle&=\liminf_{n\to\infty} \langle U\circ T_{t_n}\circ U^{-1}\mathbbm{1}_A,\mathbbm{1}_A \rangle
\\
&=\liminf_{n\to\infty} \langle T_{t_n}\circ U^{-1}\mathbbm{1}_A,U^{-1}\mathbbm{1}_A \rangle
\\
&=a \langle U^{-1}\mathbbm{1}_A,U^{-1}\mathbbm{1}_A\rangle+(1-a)\langle KU^{-1}\mathbbm{1}_A,U^{-1}\mathbbm{1}_A\rangle
\\
&=a\langle \mathbbm{1}_A,\mathbbm{1}_A\rangle + (1-a)\langle KU^{-1}\mathbbm{1}_A,U^{-1}\mathbbm{1}_A\rangle
\\
&\geq a\mu(A)-(1-a)\mu(A)=(2a-1)\mu(A)
\end{align*}
which completes the proof since $2a-1>0$.
\end{proof}

\section*{Acknowledgements}
I would like to thank Professor M.~Lema\'nczyk, Professor K.~Fr\k{a}czek and Professor C.~Ulcigrai for valuable discussions and their encouragement. I would also like to thank the referees for the comments which provided insights that helped improve the paper.

\footnotesize
\bibliography{mybib1}

\def\cprime{$'$}
\begin{thebibliography}{10}

\bibitem{Ageev09}
O.~N. Ageev.
\newblock Nonsingular {$\alpha$}-rigid maps.
\newblock {\em J. Dyn. Control Syst.}, 15(4):449--452, 2009.

\bibitem{Arnold91}
V.~I. Arnol{\cprime}d.
\newblock Topological and ergodic properties of closed {$1$}-forms with
  incommensurable periods.
\newblock {\em Funktsional. Anal. i Prilozhen.}, 25(2):1--12, 96, 1991.

\bibitem{Boshernitzan85}
M.~Boshernitzan.
\newblock A condition for minimal interval exchange maps to be uniquely
  ergodic.
\newblock {\em Duke Math. J.}, 52(3):723--752, 1985.

\bibitem{CFS82}
I.~P. Cornfeld, S.~V. Fomin, and Ya.~G. Sina{\u\i}.
\newblock {\em Ergodic theory}, volume 245 of {\em Grundlehren der
  Mathematischen Wissenschaften [Fundamental Principles of Mathematical
  Sciences]}.
\newblock Springer-Verlag, New York, 1982.

\bibitem{FL08}
K.~Fr{\k{a}}czek and M.~Lema{\'n}czyk.
\newblock On the self-similarity problem for ergodic flows.
\newblock {\em Proc. Lond. Math. Soc. (3)}, 99(3):658--696, 2009.

\bibitem{FL03}
K.~Fr\k{a}czek and M.~Lema{\'n}czyk.
\newblock On symmetric logarithm and some old examples in smooth ergodic
  theory.
\newblock {\em Fund. Math.}, 180(3):241--255, 2003.

\bibitem{FL05}
K.~Fr\k{a}czek and M.~Lema{\'n}czyk.
\newblock On disjointness properties of some smooth flows.
\newblock {\em Fund. Math.}, 185(2):117--142, 2005.

\bibitem{FL06}
K.~Fr\k{a}czek and M.~Lema{\'n}czyk.
\newblock On mild mixing of special flows over irrational rotations under
  piecewise smooth functions.
\newblock {\em Ergodic Theory Dynam. Systems}, 26(3):719--738, 2006.

\bibitem{Furstenberg67}
H.~Furstenberg.
\newblock Disjointness in ergodic theory, minimal sets, and a problem in
  {D}iophantine approximation.
\newblock {\em Math. Systems Theory}, 1:1--49, 1967.

\bibitem{HP78}
H.~Helson and W.~Parry.
\newblock Cocycles and spectra.
\newblock {\em Ark. Mat.}, 16(2):195--206, 1978.

\bibitem{Ka-Th}
A.~Katok and J.-P. Thouvenot.
\newblock Spectral properties and combinatorial constructions in ergodic
  theory.
\newblock In {\em Handbook of dynamical systems. {V}ol. 1{B}}, pages 649--743.
  Elsevier B. V., Amsterdam, 2006.

\bibitem{Katok80}
A.~B. Katok.
\newblock Interval exchange transformations and some special flows are not
  mixing.
\newblock {\em Israel J. Math.}, 35(4):301--310, 1980.

\bibitem{Keane75}
M.~Keane.
\newblock Interval exchange transformations.
\newblock {\em Math. Z.}, 141:25--31, 1975.

\bibitem{Kochergin76}
A.~V. Kochergin.
\newblock Nonsingular saddle points and the absence of mixing.
\newblock {\em Mat. Zametki}, 19(3):453--468, 1976.
\newblock In Russian.

\bibitem{Ma}
B.~Marcus.
\newblock The horocycle flow is mixing of all degrees.
\newblock {\em Invent. Math.}, 46(3):201--209, 1978.

\bibitem{MMY05}
S.~Marmi, P.~Moussa, and J.-C. Yoccoz.
\newblock The cohomological equation for {R}oth-type interval exchange maps.
\newblock {\em J. Amer. Math. Soc.}, 18(4):823--872 (electronic), 2005.

\bibitem{mayer43}
A.~Mayer.
\newblock Trajectories on the closed orientable surfaces.
\newblock {\em Rec. Math. [Mat. Sbornik] N.S.}, 12(54):71--84, 1943.

\bibitem{Rauzy79}
G.~Rauzy.
\newblock \'{E}changes d'intervalles et transformations induites.
\newblock {\em Acta Arith.}, 34(4):315--328, 1979.

\bibitem{Ry91}
V.~V. Ryzhikov.
\newblock On a connection between the mixing properties of a flow with an
  isomorphism entering into its transformations.
\newblock {\em Mat. Zametki}, 49(6):98--106, 159, 1991.

\bibitem{Ry96}
V.~V. Ryzhikov.
\newblock Stochastic intertwinings and multiple mixing of dynamical systems.
\newblock {\em J. Dynam. Control Systems}, 2(1):1--19, 1996.

\bibitem{Ry97}
V.~V. Ryzhikov.
\newblock Wreath products of tensor products, and a stochastic centralizer of
  dynamical systems.
\newblock {\em Mat. Sb.}, 188(2):67--94, 1997.

\bibitem{da-ry}
V.~V. Ryzhikov and A.~I. Danilenko.
\newblock Hamiltonian flows of multivalued hamiltonians on closed orientable
  surfaces.
\newblock Unpublished, 1994.

\bibitem{SU05}
Ya.~G. Sinai and C.~Ulcigrai.
\newblock Weak mixing in interval exchange transformations of periodic type.
\newblock {\em Lett. Math. Phys.}, 74(2):111--133, 2005.

\bibitem{0901.4764}
Corinna Ulcigrai.
\newblock Absence of mixing in area-preserving flows on surfaces.
\newblock {\em Ann. of Math.}, 173(3):1743--1778, 2011.

\bibitem{Veech81}
W.~A. Veech.
\newblock Projective {S}wiss cheeses and uniquely ergodic interval exchange
  transformations.
\newblock In {\em Ergodic theory and dynamical systems, {I} ({C}ollege {P}ark,
  {M}d., 1979--80)}, volume~10 of {\em Progr. Math.}, pages 113--193.
  Birkh\"auser Boston, Mass., 1981.

\bibitem{Veech82}
W.~A. Veech.
\newblock Gauss measures for transformations on the space of interval exchange
  maps.
\newblock {\em Ann. of Math. (2)}, 115(1):201--242, 1982.

\bibitem{MR0476998}
A.~M. Ver{\v{s}}ik.
\newblock Multivalued mappings with invariant measure (polymorphisms) and
  {M}arkov operators.
\newblock {\em Zap. Nau\v cn. Sem. Leningrad. Otdel. Mat. Inst. Steklov.
  (LOMI)}, 72:26--61, 223, 1977.
\newblock Problems of the theory of probability distributions, IV.

\bibitem{Zorich94u}
A.~Zorich.
\newblock Hamiltonian flows of multivalued hamiltonians on closed orientable
  surfaces.
\newblock Unpublished, 1994.

\bibitem{Zorich96}
A.~Zorich.
\newblock Finite {G}auss measure on the space of interval exchange
  transformations. {L}yapunov exponents.
\newblock {\em Ann. Inst. Fourier (Grenoble)}, 46(2):325--370, 1996.

\bibitem{Zorich97}
A.~Zorich.
\newblock Deviation for interval exchange transformations.
\newblock {\em Ergodic Theory Dynam. Systems}, 17(6):1477--1499, 1997.

\end{thebibliography}

\end{document}